\documentclass[11pt]{article}

\RequirePackage[amsthm,amsmath]{imsart}

 \parskip        0.2\baselineskip
 \topmargin      0.0truein
 \oddsidemargin  0.0truein
 \evensidemargin 0.0truein
 \textheight     8.7truein
 \textwidth      6.2truein
 \headheight     0.0truein
 \headsep        0.1truein

\usepackage[mathscr]{eucal}
\usepackage{graphicx}
\usepackage{latexsym}
\usepackage{amssymb}
\usepackage{psfrag}
\usepackage{epsfig}
\usepackage{enumerate}
\DeclareMathAlphabet{\mathpzc}{OT1}{pzc}{m}{it}

\include{psbox}

\usepackage{amsfonts}
\usepackage{graphicx}
\usepackage{amsfonts}
\usepackage{color}
\usepackage[usenames,dvipsnames]{xcolor}
\usepackage[textsize=small]{todonotes}

\allowdisplaybreaks[1]

\usepackage[numbers]{natbib}

\begin{document}


\newtheorem{proposition}{Proposition}[section]
\newtheorem{theorem}[proposition]{Theorem}
\newtheorem{corollary}[proposition]{Corollary}
\newtheorem{lemma}[proposition]{Lemma}
\newtheorem{conjecture}[proposition]{Conjecture}
\newtheorem{question}[proposition]{Question}
\newtheorem{definition}[proposition]{Definition}
\newtheorem{algorithm}[proposition]{Algorithm}
\newtheorem{assumption}[proposition]{Assumption}
\newtheorem{condition}[proposition]{Condition}
\renewcommand{\thefootnote}{\color{red}\arabic{footnote}}
\newcommand{\esquare}{\begin{flushright}$\square$\end{flushright}}
\numberwithin{equation}{section}
\numberwithin{proposition}{section}
\renewcommand{\theenumi}{\roman{enumi}}


%
%
\newcommand{\skp}{\vspace{\baselineskip}}
\newcommand{\noi}{\noindent}
\newcommand{\osc}{\mbox{osc}}
\newcommand{\lfl}{\lfloor}
\newcommand{\rfl}{\rfloor}

\theoremstyle{remark}
\newtheorem{example}{\bf Example}[section]
\newtheorem{remark}{\bf Remark}[section]

\newcommand{\img}{\imath}
\newcommand{\iy}{\infty}
\newcommand{\eps}{\epsilon}
\newcommand{\veps}{\varepsilon}
\newcommand{\del}{\delta}
\newcommand{\Rk}{\mathbb{R}^k}
\newcommand{\RR}{\mathbb{R}}
\newcommand{\spa}{\vspace{.2in}}
\newcommand{\V}{\mathcal{V}}
\newcommand{\E}{\mathbb{E}}
\newcommand{\I}{\mathbb{I}}
\newcommand{\PP}{\mathbb{P}}
\newcommand{\sgn}{\mbox{sgn}}
\newcommand{\ti}{\tilde}

\newcommand{\QQ}{\mathbb{Q}}

\newcommand{\XX}{\mathbb{X}}
\newcommand{\XXz}{\mathbb{X}^0}

\newcommand{\lan}{\langle}
\newcommand{\ran}{\rangle}
\newcommand{\lf}{\lfloor}
\newcommand{\rf}{\rfloor}
\def\wh{\widehat}
\newcommand{\defn}{\stackrel{def}{=}}
\newcommand{\txb}{\tau^{\epsilon,x}_{B^c}}
\newcommand{\tyb}{\tau^{\epsilon,y}_{B^c}}
\newcommand{\tilxb}{\tilde{\tau}^\eps_1}
\newcommand{\pxeps}{\mathbb{P}_x^{\eps}}
\newcommand{\non}{\nonumber}
\newcommand{\dist}{\mbox{dist}}

\newcommand{\Om}{\mathnormal{\Omega}}
\newcommand{\om}{\omega}
\newcommand{\vph}{\varphi}
\newcommand{\Del}{\mathnormal{\Delta}}
\newcommand{\Gam}{\mathnormal{\Gamma}}
\newcommand{\Sig}{\mathnormal{\Sigma}}

\newcommand{\tilyb}{\tilde{\tau}^\eps_2}
\newcommand{\beq}{\begin{eqnarray*}}
\newcommand{\eeq}{\end{eqnarray*}}
\newcommand{\beqn}{\begin{eqnarray}}
\newcommand{\eeqn}{\end{eqnarray}}
\newcommand{\ink}{\rule{.5\baselineskip}{.55\baselineskip}}

\newcommand{\bt}{\begin{theorem}}
\newcommand{\et}{\end{theorem}}
\newcommand{\deps}{\Del_{\eps}}
\newcommand{\dbl}{\mathbf{d}_{\tiny{\mbox{BL}}}}

\newcommand{\be}{\begin{equation}}
\newcommand{\ee}{\end{equation}}
\newcommand{\bes}{\begin{equation*}}
\newcommand{\ees}{\end{equation*}}
\newcommand{\ba}{\begin{aligned}}
\newcommand{\ea}{\end{aligned}}
\newcommand{\ac}{\mbox{AC}}
\newcommand{\BB}{\mathbb{B}}
\newcommand{\VV}{\mathbb{V}}
\newcommand{\DD}{\mathbb{D}}
\newcommand{\KK}{\mathbb{K}}
\newcommand{\HH}{\mathbb{H}}
\newcommand{\TT}{\mathbb{T}}
\newcommand{\CC}{\mathbb{C}}
\newcommand{\ZZ}{\mathbb{Z}}
\newcommand{\SSS}{\mathbb{S}}
\newcommand{\EE}{\mathbb{E}}
\newcommand{\NN}{\mathbb{N}}

\newcommand{\clg}{\mathcal{G}}
\newcommand{\clb}{\mathcal{B}}
\newcommand{\cls}{\mathcal{S}}
\newcommand{\clc}{\mathcal{C}}
\newcommand{\clj}{\mathcal{J}}
\newcommand{\clm}{\mathcal{M}}
\newcommand{\clx}{\mathcal{X}}
\newcommand{\cld}{\mathcal{D}}
\newcommand{\cle}{\mathcal{E}}
\newcommand{\clv}{\mathcal{V}}
\newcommand{\clu}{\mathcal{U}}
\newcommand{\clr}{\mathcal{R}}
\newcommand{\clt}{\mathcal{T}}
\newcommand{\cll}{\mathcal{L}}
\newcommand{\clz}{\mathcal{Z}}
\newcommand{\clq}{\mathcal{Q}}
\newcommand{\clo}{\mathcal{O}}

\newcommand{\cli}{\mathcal{I}}
\newcommand{\clp}{\mathcal{P}}
\newcommand{\cla}{\mathcal{A}}
\newcommand{\clf}{\mathcal{F}}
\newcommand{\clh}{\mathcal{H}}
\newcommand{\N}{\mathbb{N}}
\newcommand{\Q}{\mathbb{Q}}
\newcommand{\bfx}{{\boldsymbol{x}}}
\newcommand{\bfh}{{\boldsymbol{h}}}
\newcommand{\bfs}{{\boldsymbol{s}}}
\newcommand{\bfm}{{\boldsymbol{m}}}
\newcommand{\bff}{{\boldsymbol{f}}}
\newcommand{\bfb}{{\boldsymbol{b}}}
\newcommand{\bfw}{{\boldsymbol{w}}}
\newcommand{\bfz}{{\boldsymbol{z}}}
\newcommand{\bfu}{{\boldsymbol{u}}}
\newcommand{\bfell}{{\boldsymbol{\ell}}}
\newcommand{\bfn}{{\boldsymbol{n}}}
\newcommand{\bfd}{{\boldsymbol{d}}}
\newcommand{\bfbeta}{{\boldsymbol{\beta}}}
\newcommand{\bfzeta}{{\boldsymbol{\zeta}}}
\newcommand{\bfnu}{{\boldsymbol{\nu}}}

\newcommand{\bfl}{{ L}}
\newcommand{\bfc}{{C}}
\newcommand{\bfg}{{\bf G}}
\newcommand{\bfi}{{1}}
\newcommand{\bfk}{{\bf k}}
\newcommand{\bfDD}{{D}}

\newcommand{\bft}{{\bf T}}

\newcommand{\hatq}{\hat{Q}}
\newcommand{\hata}{\hat{A}}
\newcommand{\hatd}{\hat{D}}
\newcommand{\hats}{\hat{S}}
\newcommand{\hatx}{\hat{X}}
\newcommand{\haty}{\hat{Y}}
\newcommand{\hati}{\hat{I}}
\newcommand{\hatw}{\hat{W}}
\newcommand{\bard}{\bar{D}}
\newcommand{\barq}{\bar{Q}}
\newcommand{\bara}{\bar{A}}
\newcommand{\bars}{\bar{S}}
\newcommand{\bari}{\bar{I}}

\newcommand{\bart}{\bar{T}}
\newcommand{\Go}{\Rightarrow}

\newcommand{\curvz}{{\bf \mathpzc{z}}}
\newcommand{\curvx}{{\bf \mathpzc{x}}}
\newcommand{\curvi}{{\bf \mathpzc{i}}}
\newcommand{\curvs}{{\bf \mathpzc{s}}}
\newcommand{\blip}{\mathbb{B}_1}

\newcommand{\BM}{\mbox{BM}}

\newcommand{\tac}{\mbox{\scriptsize{AC}}}

\newcommand{\bfA}{{\boldsymbol{A}}}
\newcommand{\bfB}{{\boldsymbol{B}}}
\newcommand{\bfC}{{\boldsymbol{C}}}
\newcommand{\bfD}{D}
\newcommand{\bfG}{{\boldsymbol{G}}}
\newcommand{\bfH}{{\boldsymbol{H}}}
\newcommand{\vt}{\Theta}
\newcommand{\dmnk}{\mathcal{D}^{n,k}_m}
\newcommand{\hmnk}{\mathcal{H}^{n,k}_m}

\newcommand{\bi}{\begin{itemize}}
\newcommand{\ei}{\end{itemize}}
\newcommand{\var}{\mbox{Var}}
\newcommand{\barn}{\bar{N}}
\newcommand{\barg}{\bar{G}}
\newcommand{\barr}{\bar{R}}
\newcommand{\hatn}{\hat{N}}
\newcommand{\hatg}{\hat{G}}
\newcommand{\hatr}{\hat{R}}
\newcommand{\D}{{\rm d}}
\newcommand{\hatm}{\hat{M}}
%


\begin{frontmatter}
\title{Diffusion Approximations for Double-ended Queues with Reneging in Heavy Traffic}

 \runtitle{Double-ended queues with reneging}

\begin{aug}
\author{Xin Liu\\ \ \\
}
\end{aug}

\today

\skp

\begin{abstract}
\noi
We study a double-ended queue which consists of two classes of customers. Whenever there is a pair of customers from both classes, they are matched and leave the system immediately. The matching follows first-come-first-serve principle. If a customer from one class cannot be matched immediately, he/she will stay in a queue and wait for the upcoming arrivals from the other class. Thus there cannot be non-zero numbers of customers from both classes simultaneously in the system. We also assume that each customer can leave the queue without being matched because of impatience. The arrival processes are assumed to be independent renewal processes, and the patience times for both classes are generally distributed. Under suitable heavy traffic conditions, assuming that the diffusion-scaled queue length process is stochastically bounded, we establish a simple asymptotic relationship between the diffusion-scaled queue length process and the diffusion-scaled offered waiting time processes, and further show that the diffusion-scaled queue length process converges weakly to a diffusion process. We also provide a sufficient condition for the stochastic boundedness of the diffusion-scaled queue length process. At last, the explicit form of the stationary distribution of the limit diffusion process is derived. 
\\

\noi {\bf AMS 2010 subject classifications:} Primary: 60F05, 60K25, 90B22; Secondary: 60K05. 

\noi {\bf Keywords:} Double-ended queues; Matching systems; First-Come-First-Serve; Customer abandonment; Generally distributed patience times; Hazard rate scaling; Diffusion approximations; Stationary distribution; Heavy traffic.
\end{abstract}

\end{frontmatter}

%
%
%
%
%
%
%
%
%
%

\section{Introduction}

Consider a simple matching system which consists of two classes of customers. Whenever there is a pair of customers from both classes, they are matched and leave the system immediately. The matching follows first-come-first-serve principle. If a customer from one class cannot be matched immediately, he/she will stay in a queue and wait for the upcoming arrivals from the other class. Thus there cannot be non-zero numbers of customers from both classes simultaneously in the system. Customers are assumed to be impatient and they can leave the system without being matched. Such system forms a double-ended queueing system, which is schematically shown in Figure \ref{fig1}. We assume that the arrival processes are independent renewal processes, and the patience times for customers of each class are IID with a general distribution. Under certain conditions, we establish a simple asymptotic relationship between the diffusion-scaled queue length process and the diffusion-scaled offered waiting time processes, and show that the diffusion-scaled queue length process converges weakly to a diffusion process. Those conditions consist of a suitable heavy traffic condition, a mild condition on patience-time distributions, and an assumption that the diffusion-scaled queue length process is stochastically bounded. We further show that under the heavy traffic condition and a proper scaling of the hazard rate functions of the patience-time distributions, the diffusion-scaled queue length is stochastically bounded. Finally, the unique stationary distribution of the diffusion limit is derived.  Diffusion approximations for double-ended queues with renewal arrivals and exponential patience times are studied in \cite{liu2015}, and our current work extends the heavy traffic diffusion approximation result in \cite{liu2015} to a setting with generally distributed patience times. 

Double-ended queues have been studied for many applications, including taxi-service systems, financial markets, assembly systems, perishable inventory systems, and organ transplant systems. Double-ended queues with Poisson arrivals were first introduced by Kendall in \cite{kendall51}, and Dobbie \cite{dobbie61} studied the probability of a given queue size of double-ended queues proposed by Kendall. Later on, Giveen \cite{Giveen63} considered double-ended queues with time-dependent Poisson arrivals, and studied the limiting distributions, and Kashyap \cite{kashyap1966double} studied a taxi service system with limited waiting space, where the arrivals of taxies and passengers are assumed to be Poisson processes, and derived the steady state distribution. In \cite{conolly2002double}, Conolly et al. studied the impatience behavior of customers under the assumption of exponential arrivals, services, and patience times. Applications of double-ended queues on financial markets, networks with synchronization nodes, perishable inventory systems, and organ transplant systems can be found in \cite{adm14, zenios, prabhakar2000synchronization, perry1999perishable, bdps}, etc. When renewal arrivals and/or generally distributed patience times are considered, the explicit form of the limiting distribution becomes intractable. Degirmenci \cite{talayasymptotic} studied the asymptotic behaviors of the Markovian double-ended queue, and conjectured that the result holds for the setting with renewal arrivals and general patience times. In \cite{kim2010simulation}, Kim et al. studied the extended double-ended queue which allowed bulk arrivals, general patience times, positive processing times, etc, using simulation methods. Zenios \cite{zenios} applied Laplace's method to a double-ended queue where the inter-arrivals of supply were generally distributed, and demand arrived according a Poisson process, and only demand had abandonments with generally distributed patience times. In this work, we develop rigorous diffusion approximations for double-ended queues with renewal arrivals and generally distributed patience times under appropriate asymptotic regimes. Diffusion approximations of (one-sided) queueing systems with reneging have been studied in both conventional heavy traffic regime and Halfin-Whitt regime. Roughly speaking, in both regimes, the queueing system is nearly critically loaded. The conventional heavy traffic analysis can be found in \cite{WardGlynn03, WardGlynn05, Reed08, Lee11}, while \cite{Garnet02, Man05, Dai10, Man12, DaiHe, ReedTezcan12} study many-server queues in Halfin-Whitt regime.

In this work, our main assumption on patience time distributions is similar to that considered by Lee and Weerasinghe \cite{Lee11}, which requires the patience time distribution functions satisfy some scaling limit results, and are more general than the commonly used assumptions in the above literatures  (see Assumption \ref{patience} and Remark \ref{remark} (iii)). Our main results, Theorems \ref{relation-thm} and \ref{diffusion-thm}, are established under this assumption, the heavy traffic assumption (Assumption \ref{htc}), and the assumption that the diffusion-scaled queue length process is stochastically bounded. We then consider the hazard rate scaling as in \cite{Reed08, ReedTezcan12, kang10} (see Assumption \ref{hazard}), and show  in Corollary \ref{cor} that under such hazard rate scaling and the heavy traffic condition, the diffusion-scaled queue length process is stochastically bounded and so all the results hold. It is worth noting that the condition that the diffusion-scaled queue length process is stochastically bounded is considered by Dai and He \cite{DaiHe}, and they establish the diffusion approximations for a $G/G/n+GI$ queue under this condition, and a more strict condition on patience time distribution in Halfin-Whitt regime. Furthermore, Reed and Tezcan \cite{ReedTezcan12} obtain a heavy traffic limit for $GI/M/n + GI$ queue by assuming that the patience time distribution satisfies an appropriate hazard rate scaling.  

The rest of the paper is organized as follows. In Section \ref{sec:MF}, we introduce the model of the double-ended queue with renewal arrivals and generally distributed patience times. In Section \ref{sec:AF}, we present the asymptotic framework by introducing the main assumptions.  All the main results are collected in Section \ref{sec:results}, and their proofs are provided in Section \ref{proofs}, and Appendix.

We use the following notation. Denote by $\mathbb{R},$ $\mathbb{R}_+,$ $\mathbb{Z}$, and $\mathbb{N}$ the sets of real numbers, nonnegative real numbers, integers, and positive integers, respectively. For a real number $a$, define $a^+ = \max\{a, 0\}$ and $a^- = \max\{0,-a\}.$ Similarly, for a real function $f$ defined on $[0,\infty)$, define $f^+(t) = \max\{0, f(t)\}$ and $f^-(t) = \max\{0, -f(t)\}, \ t\ge 0.$ Denote by $\mathcal{D}([0,\infty); \RR)$ the space of right continuous functions with left limits defined from $[0,\infty)$ to $\RR$ with the usual Skorohod topology. For $x\in \mathcal{D}([0,\infty); \RR)$, let 
$$\|x\|_t = \sup_{s\in[0, t]} |x(s)|, \ t\ge0, $$ and 
$$\|x\|_{s,t} = \sup_{u\in[s, t]} |x(u)|, \ t\ge s\ge0.$$
 A function $f: [0,\infty)\to [0,\infty)$ is called locally Lipschitz continuous if for any $t\in [0,\infty),$ there exists $\kappa\in (0,\infty)$ (may depend on $t$) such that for $x_1, x_2 \in [0, t]$,
\bes
|f(x_1) - f(x_2)| \le \kappa |x_1 - x_2|.
\ees
For a stochastic process $X$, we will use the notation $X(t)$ and $X_t$ interchangeably. For a semimartingale $Y\in \mathcal{D}([0,\infty); \RR)$, we denote by $[Y]$ the quadratic variation of $Y$. A sequence of stochastic processes $\{X^n\}$ is called stochastically bounded if for each $T\ge 0$, 
\[
\lim_{K\to\infty}\limsup_{n\to\infty} \PP(\|X^n\|_T > K) =0.
\]

\section{Network model}\label{sec:MF}

We study a double-ended queue which consists of two classes of customers -- Classes $1$ and $-1$. Whenever there is a pair of customers from both classes, they are matched and leave the system immediately. The matching follows first-come-first-served principle. If a customer from one class cannot be matched immediately, he/she will stay in a queue and wait for the upcoming arrivals from the other class. We also assume that each customer is impatient. A double-ended queue is schematically given in
Figure \ref{fig1}.
\begin{center}
\begin{figure}[h]
\input{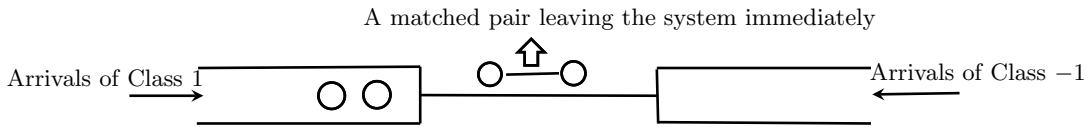}
  \caption{A double-ended Queue.} \label{fig1}
\end{figure}
\end{center}
Let $Q(t)$ be the length of the double-ended queue at time $t$. Different from queue length processes in one-sided queueing systems, here $Q(t) \in \mathbb{Z}$, and we let $Q^+(t)$ denote the number of customers of Class $1$ waiting in the queue at time $t$, and $Q^-(t)$ the number of customers of Class $-1$ waiting in the queue at time $t$. For $t\ge 0$, let $$Q_1(t) = Q^{+}(t) \  \ \mbox{and} \ \ Q_{-1}(t) = Q^{-}(t).$$ 
Without loss of generality, we assume $Q_1(0)\ge 0$ and $Q_{-1}(0)=0$. 

Let $(\Omega, \mathcal{F}, \PP)$ be a complete probability space satisfying the usual conditions. All the random variables and stochastic processes are assumed to be defined on this space. Let $i=\pm1$. We assume the inter-arrival times
for customers of Class $i$ are i.i.d. random variables $\{u_{i,k}: k\in \mathbb{N}\}$, and $u_{i,1}$ has mean $\frac{1}{\lambda_i}$ and standard deviation $\sigma_i$.
Define
\begin{equation*}
\begin{aligned}
N_i(t) &= \max\left\{k\in\NN: \sum_{l=1}^k u_{i,l}\leq t\right\}.
\end{aligned}
\end{equation*}
The renewal process  $N_i$ is the arrival process for Class $i$. We assume that the patience times of customers of Class $i$ are given by an i.i.d. sequence $\{d_{i,k}: k\in\mathbb{Z}\}.$ For $k\in\NN$, $d_{i,k}$ represents the patience time of the $k$-th customer of Class $i$ who enters the system after time $0$, and for $k\in -\NN \cup\{0\}$, $d_{i,k}$ is the patience time of the $(-k+1)$-st customer of Class $i$ who enters the system prior to time $0$ (if such customers exist). We assume that $d_{i,1}$ has cumulative distribution function $F_i$. Finally, we assume $Q(0)$, $\{u_{1,k}: k\in \mathbb{N}\}, \{u_{-1,k}: k\in\mathbb{N}\},$ $\{d_{1,k}: k\in\mathbb{Z}\},$ and $\{d_{-1,k}: k\in\mathbb{Z}\}$ are independent. 

For $k\in \ZZ$, define $t_{i,k}$ to be the arrival time of the $k^{th}$ customer of Class $i$. More precisely, for $k\in\NN$, $t_{i,k} = \sum_{l=1}^k u_{i,l}$, and for $k\in -\NN\cup\{0\}$, we let $t_{i,k}=0$, that is all customers, who arrive prior to time $0$, have arrival times equal to $0$. For $k\in\mathbb{Z},$ let $w_{i,k}$, which is called the {\it offered waiting time}, denote the waiting time that the $(k1_{\{k>0\}}+(-k+1)1_{\{k\le 0\}})$-th customer of Class $i$ needs to experience assuming her/his patience time is infinite. For a customer who arrives prior to time $0$, we assume her/his waiting time starts to count at time $0.$
Let $G_i(t)$ denote the number of Class $i$ customers who abandon the system by time $t$. Then for $t\ge 0$, 
\begin{equation}
G_i(t) = \sum_{k=-Q_i(0)+1}^{N_i(t)} 1_{\{d_{i,k} \le w_{i,k}, \ t_{i,k}+d_{i,k}\le t\}}.
\end{equation}
The process $G_i$ will be called the {\it abandonment process} for Class $i$. The {\em queue length process} now can be formulated as follows: 
\begin{equation}\label{queue-length}
Q(t) =  Q(0) + N_1(t) - N_{-1}(t) - G_1(t) + G_{-1}(t).
\end{equation}

To study the offered waiting times, we next define $R_i(t)$ to be the number of Class $i$ customers who have arrived by time $t$, and will abandon the system eventually. Then for $t\ge 0,$
\begin{equation}
R_i(t)  = \sum_{k=-Q_i(0)+1}^{N_i(t)} 1_{\{d_{i,k} \le w_{i,k}\}}.
\end{equation}
For Class $1$, there are $Q_1(0) = Q^+(0)$ customers entering the system prior to time $0$, and for such customers, define
\bes
R_{1,k}(0) = \sum_{l=k+1}^0 1_{\{d_{1,l} \le w_{1,l}\}}, \ k= -Q_1(0)+1, \ldots, -1,  \ \mbox{and} \  R_{1,0} =0.
\ees
Here $R_{1,k}, k =-Q_1(0)+1, \ldots, -1, 0,$ represents the number of the first $-k$ customers of Class $1$ who arrives prior to time $0$, and will abandon the system eventually. In the following, we characterize the offered waiting times of customers in terms of $\{t_{-i,k}, k\in\ZZ\}$, $\{R_i(t), t\ge 0\}$ and $\{R_{1,k}(0), k=-Q_1(0)+1, \ldots, -1, 0\}.$ First, it is clear that the offered waiting time of the first customer of Class $1$ who arrives prior to time $0$ is $t_{-1,1}$ (i.e., the arrival time of the first customer of Class $-1$). For $k=-Q_1(0)+1, \ldots, -1$, assuming he/she is patient, the $(-k+1)$-st customer of Class $1$ arriving before time $0$ will be matched with the $(-k+1-R_{1,k}(0))$-th customer of Class $-1$. Thus the offered waiting time of the $(-k+1)$-st customer of Class $1$ who arrives before time $0$ is 
\begin{equation}\label{waiting-negative}
w_{1,k} = t_{-1, -k+1 -R_{1,k}(0)}. 
\end{equation}
We next consider the customers arriving after time $0$. For $k\in \NN$, at time $t_{1,k}$, the $k$-th customer of Class $1$ arrives at the system. If $Q_{-1}(t_{1,k}-)>0$ (i.e., there are customers of Class $-1$ waiting at time $t_{1,k}$), then the offered waiting time is
\begin{equation}\label{eq:23}
w_{1,k}=0.
\end{equation} 
If $Q_{-1}(t_{1,k}-)=0$ (i.e., there is no customer of Class $-1$ waiting at time $t_{1,k}$), this $k$-th customer of Class $1$ must wait in the system for the $(k+Q_1(0)- R_1(t_{1,k}-)+ G_{-1}(t_{1,k}-))$-th customer of Class $-1$, and the offered waiting time is
\bes
w_{1,k} = t_{-1, k+Q_1(0)-R_1(t_{1,k}-)+G_{-1}(t_{1,k}-)} - t_{1,k}.
\ees
Next noting that $R_{-1}(t)=G_{-1}(t)$ when $Q_{-1}(t) =0$, we have 
\begin{equation}\label{eq:21}
w_{1,k} = t_{-1, k+Q_1(0)-R_1(t_{1,k}-)+R_{-1}(t_{1,k}-)} - t_{1,k}.
\end{equation}
  Combining \eqref{waiting-negative}, \eqref{eq:23} and \eqref{eq:21}, we have that
\begin{equation}\label{owt-1}
w_{1,k} = \begin{cases}
 \left(  t_{-1, k+Q_1(0)-R_1(t_{1,k}-)+R_{-1}(t_{1,k}-)} - t_{1,k}\right) 1_{\{Q_{-1}(t_{1,k}-)=0\}}, & k\in\NN, \\
t_{-1, -k+1 -R_{1,k}(0)}, & -Q_1(0)+1 \le k\le 0.
\end{cases}
\end{equation}
Using similar arguments, we have for Class $-1$, 
\begin{equation}\label{owt-2}
w_{-1,k} = \left(  t_{1, k-R_{-1}(t_{-1,k}-)-Q_1(0)+R_{1}(t_{-1,k}-)} - t_{-1,k}\right) 1_{\{Q_{1}(t_{-1,k}-)=0\}}, \ k\in\NN.
\end{equation}
It is shown in Lemma \ref{reformulate} that for $i=\pm 1$ and $k\in\NN$,
\begin{align}\label{owt}
w_{i,k} = \left[t_{-i, k+Q_i(0)-R_i(t_{i,k}-)-Q_{-i}(0)+R_{-i}(t_{i,k}-)} - t_{i,k}\right]^+.
\end{align}
We thus define the virtual waiting time process $W_i(t)$, which can be interpreted as the amount of time that a hypothetical customer of Class $i$ arriving right after time $t$ will have to wait in the system, as follows. For $t\ge 0,$
\begin{equation}\label{vwt}
W_i(t) = \left[ t_{-i,N_i(t) + 1 + Q_i(0) - R_i(t) - Q_{-i}(0)+ R_{-i}(t)} - t\right]^+.
\end{equation}
It is clear that $W_i(t_{i,k}-)=w_{i,k}$ for $k\in \mathbb{N}.$

\begin{remark}\label{remark_1}\hfill
\begin{itemize}
\item[\rm (i)] In \eqref{owt-2}, it is possible that the subscript $k-R_{-1}(t_{-1,k}-)-Q_1(0)+R_{1}(t_{-1,k}-) \le 0$. When this happens, we have $Q_{1}(t_{-1,k}-)>0$, and $w_{-1,k}=0$. (See Lemma 5.1 for its proof.) In fact, this $k$-th customer of Class $-1$ will be matched with a customer of Class $1$ who arrives prior to time $0$ and are waiting in the queue at time $t_{-1,k}$. 
%
\item[\rm (ii)] Lemma \ref{reformulate} shows that for $k\in\NN$,  in \eqref{owt-1}, when $Q_{-1}(t_{1,k}-)=0$, we have 
\[ 
t_{-1, k+Q_1(0)-R_1(t_{1,k}-)+R_{-1}(t_{1,k}-)} \ge t_{1,k},
\] 
and in \eqref{owt-2}, when $k-R_{-1}(t_{-1,k}-)-Q_1(0)+R_{1}(t_{-1,k}-) > 0$, and $Q_{1}(t_{-1,k}-)=0$, we have
\[
t_{1, k-R_{-1}(t_{-1,k}-)-Q_1(0)+R_{1}(t_{-1,k}-)} \ge t_{-1,k}.
\]

\end{itemize}
\end{remark}

\section{Asymptotic framework}\label{sec:AF}

We consider a sequence of double-ended queues indexed by $n\in\mathbb{N}$. For the $n$-th system, all the notation introduced in Section \ref{sec:MF} is carried forward except that we append a superscript $n$ to all quantities to indicate the dependence of parameters, random variables, and stochastic processes on $n$. Let  $i=\pm 1$. In particular, we assume all the random variables $\{u_{i,k}^n: k\in \mathbb{N}\}$, $\{d_{i,k}^n: k\in\mathbb{Z}\}$, $\{t_{i,k}^n: k\in\mathbb{Z}\}, \{w_{i,k}^n: k\in\mathbb{Z}\}$, and stochastic processes $Q^n, N^n_i, G^n_i, R^n_i, W^n_i$ are defined on the space $(\Omega^n, \mathcal{F}^n, \PP^n)$. The expectation operator with respect to $\PP^n$ will be denoted by $\E^n$. Finally, the cumulative distribution function of $d^n_{i,1}$ is $F^n_i$, and the mean and standard deviation of $u^n_{i,1}$ are $\frac{1}{\lambda^n_i}$ and $\sigma^n_i.$

The following are our main assumptions. 

\begin{assumption}[Conditions on inter-arrival times] \label{arrival}
 
There exist independent sequences of i.i.d. nonnegative random variables $\{\tilde{u}^n_{i,k}: k\in\NN\}, i=\pm 1,$ such that for $k\in\NN$ and $n\in\NN,$
\begin{itemize}
\item[\rm (i)]  $u^n_{i,k} = \frac{\tilde{u}^n_{i,k}}{n};$
\item[\rm (ii)] $\E(\tilde{u}^n_{i,k}) \to \frac{1}{\lambda_i}, \ \var(\tilde{u}^n_{i,k})\to \sigma_i^2$ as $n\to\infty$;
\item[\rm (iii)]  $\{(\tilde{u}^n_{i,1})^2: n\in \NN\}$ is uniformly integrable. 

\end{itemize}
\end{assumption}

\begin{assumption}[Heavy traffic condition]\label{htc}
There exists $c\in \RR$ such that 
\begin{equation}\label{heavytraffic}
\frac{\lambda^n_1-\lambda^n_{-1}}{\sqrt{n}} \to c, \ \mbox{as $n\to\infty.$}
\end{equation}
\end{assumption}

\begin{assumption}[Conditions on patience times]\label{patience}
For $i=\pm 1$, 
$F^n_i(0)=0$, and there exists nonnegative nondecreasing locally Lipschitz continuous functions $H_i$ on $[0,\infty)$ such that for $t\ge 0,$
\bes
\sqrt{n} F^n_i\left(\frac{t}{\sqrt{n}}\right)\to H_i(t), \ \mbox{as $n\to\infty.$}
\ees
In particular, $F^n_i(\frac{t}{\sqrt{n}})\to 0$, as $n\to\infty.$
\end{assumption}

\begin{assumption}[Conditions on hazard rate functions]\label{hazard}
For $i=\pm 1$, $F^n_i$ has density function $f^n_i$ and hazard rate function $h^n_i  = f^n_i/(1-F^n_i)$ on $[0, M^n_i)$, where $M^n_i = \sup\{s\ge 0: F^n(s) < 1\}$. Furthermore, for $t\in [0, M^n_i)$, $\sup_{n\in\NN}\|h^n_i\|_t < \infty$, and $h^n_i(t/\sqrt{n})\to h_i(t)$ as $n\to\infty$, where $h_i$ is a nonnegative measurable function. 
\end{assumption}

\begin{remark}\label{remark}\hfill
\begin{itemize}
\item[\rm (i)] From Assumption \ref{arrival}, we have for $i=\pm 1,$
\begin{align}
\lambda^n_i = \frac{1}{ \E(u^n_{i,1})}, \ \ \mbox{and} \ \ \frac{\lambda^n_i}{n}\to \lambda_i,
\end{align}
and 
\begin{align}
(\sigma^n_i)^2 = \var(u^n_{i,1}), \ \ \mbox{and} \ \ n^2(\sigma^n_i)^2\to \sigma_i^2.
\end{align}
\item[\rm (ii)] The heavy traffic condition in Assumption \ref{htc} implies that $\lambda_1 = \lambda_{-1}$. We are going to let 
\begin{align}\label{lambda}
\lambda = \lambda_1 =\lambda_{-1}.
\end{align}
\item[\rm (iii)] Assumption \ref{patience} is similar to Assumption 3.4 in \cite{Lee11}. We list some examples of the cumulative distribution function satisfying Assumption \ref{patience} (also see Remark 3.4 of \cite{Lee11}).
\begin{itemize}
\item[\rm (a)] Let $F$ be a cumulative distribution function which is right differentiable at $0$, and $F(0)=0$. Then $F^n\equiv F$ satisfies Assumption \ref{patience} with $H(x) = F'(0)x$. Such patience times distribution has been considered in \cite{Man05, Man12, Dai10, DaiHe}.
\item[\rm (b)] Consider $F^n(x) = 1 -\exp\{-\int_0^x h(\sqrt{n} u) du\}$, where $h$ is a nonnegative continuous function. Then $F^n$ satisfies Assumption \ref{patience} with $H(x) = \int_0^x h(u) du$. Here $h^n(x)\equiv h(\sqrt{n}x), x\ge 0$, is the hazard rate function of $F^n$, and such hazard rate scaling of patience times distribution has been studied in \cite{Reed08, ReedTezcan12, kang10}. 
\item[\rm (c)] There are many other distribution functions besides those in (a) and (b) satisfying Assumption \ref{patience}. For example, let 
\[
F^n_i(x) = \begin{cases}
F^n(x), & x\in [0,\delta) \\
1, & x\in [\delta, \infty),
\end{cases}
\]
where $F^n$ is as in (a) or (b), and $\delta>0$. It is easy to see that $F^n_i$ has the same scaling limit function as $F^n$. In fact, Assumption \ref{patience} only requires $F^n_i$ has the scaling limit over a small interval $[0, \mathcal{O}(\frac{1}{\sqrt{n}})].$
\end{itemize}
\item[\rm (iv)] Under Assumption \ref{hazard}, we have $F^n_i(t) = 1 - \exp\{-\int_0^x h^n_i(u) du\}$, and $F^n_i$ satisfies Assumption \ref{patience} with $H_i(x) = \int_0^x h_i(u) du$. Thus the patience times distribution discussed in (iii)(b) is a special case of Assumption \ref{hazard}. This assumption is the same as the main assumption in \cite{kang10}.
\end{itemize}
\end{remark}

\section{Main results}\label{sec:results}
Define the fluid and diffusion-scaled processes as follows. For $i=\pm 1$ and $t\ge 0,$ \\
{\em Fluid scaled processes:}
\bes
 \barq^n(t) = \frac{Q^n(t)}{n}, \; \barn^n_i(t) = \frac{N_i^n(t)}{n}, \ \barg^n_i(t) = \frac{G_i^n(t)}{n}, \ \barr^n_i(t) = \frac{R_i^n(t)}{n},
\ees
{\em Diffusion scaled processes:}
\begin{align*}
&\hatq^n(t) = \frac{Q^n(t)}{\sqrt{n}}, \; \hatn^n_i(t) = \frac{N^n_i(t)-\lambda_i^nt}{\sqrt{n}}, \;  \hatg^n_{i}(t) = \frac{G^n_i(t)}{\sqrt{n}}, \ \hatr^n_{i}(t) = \frac{R^n_{i}(t)}{\sqrt{n}},\\
&  \hatw^n_i(t) = \sqrt{n}W^n_i(t).
\end{align*}
Our main results are presented as follows. Theorem \ref{relation-thm} establishes an asymptotic linear relationship between the diffusion-scaled offered waiting time processes and the diffusion scaled queue length process for both classes. The scaling limit theorem for the queue length process is provided in Theorem \ref{diffusion-thm}, and we derive the stationary distribution of the diffusion limit in Theorem \ref{stability}. All these results are established under Assumptions \ref{arrival}, \ref{htc} and \ref{patience}, and a further condition that $\hatq^n$ is stochastically bounded. At last, we show that $\hatq^n$ is stochastically bounded under Assumptions \ref{arrival}, \ref{htc}, and \ref{hazard}, and consequently, all results in Theorem \ref{relation-thm} -- \ref{stability} hold (see Corollary \ref{cor} for details).

Recall the parameters $\lambda, c,$ and $\sigma_1, \sigma_{-1}$ in \eqref{lambda}, \eqref{heavytraffic}, and Assumption \ref{arrival}, respectively, and the functions $H_1, H_{-1}$ in Assumption \ref{patience}.
\begin{theorem}\label{relation-thm}
Assume that $\hatq^n$ is stochastically bounded and $\hatq^n(0)$ converges weakly to some random variable $q$ as $n\to\infty$. Then under Assumptions \ref{arrival}, \ref{htc} and \ref{patience}, we have 
\bes
\left(\hatw^n_1 - \frac{\hatq^{n,+}}{\lambda}, \ \hatw^n_{-1} - \frac{\hatq^{n,-}}{\lambda}\right) \Go 0, \ \ \mbox{as $n\to\infty.$}
\ees
\end{theorem}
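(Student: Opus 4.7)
The plan is to start from the explicit virtual-waiting-time formula \eqref{vwt} and extract its leading $\sqrt{n}$-order behaviour. I focus on the case $i=1$; the case $i=-1$ is symmetric. Define
\[
K^n(t) := N^n_1(t) + Q^n_1(0) - R^n_1(t) - Q^n_{-1}(0) + R^n_{-1}(t),
\]
so that $W^n_1(t) = [t^n_{-1,K^n(t)+1} - t]^+$. Substituting the queue-length identity \eqref{queue-length} yields the decomposition
\[
K^n(t) - N^n_{-1}(t) = Q^n(t) - (R^n_1(t) - G^n_1(t)) + (R^n_{-1}(t) - G^n_{-1}(t)).
\]
First-come-first-served matching forces $R^n_{-1}(t) = G^n_{-1}(t)$ whenever $Q^n_{-1}(t)=0$, because a Class $-1$ customer who will eventually abandon must still be in queue. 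Hence on the event $\{Q^n_1(t)>0\}\subseteq\{Q^n_{-1}(t)=0\}$ the right-hand side simplifies to $Q^n_1(t) - (R^n_1(t)-G^n_1(t))$. On the complementary event $\{Q^n_1(t)=0\}$, a direct check of the definition shows $W^n_1(t)$ is bounded by one Class $-1$ inter-arrival time, hence $O(1/n)$ and a fortiori $o_p(1/\sqrt{n})$; simultaneously $\hatq^{n,+}(t)=0$, so the difference $\hatw^n_1(t) - \hatq^{n,+}(t)/\lambda$ is already $o_p(1)$ there.

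The next step applies the renewal strong law and functional CLT to $t^n_{-1,\cdot}$. Writing
\[
t^n_{-1,K^n(t)+1} - t = \bigl(t^n_{-1,K^n(t)+1} - t^n_{-1,N^n_{-1}(t)}\bigr) - \bigl(t - t^n_{-1,N^n_{-1}(t)}\bigr),
\]
the residual $t - t^n_{-1,N^n_{-1}(t)}$ is dominated by $\max_{k\le N^n_{-1}(T)+1} u^n_{-1,k}$, which is $o_p(1/\sqrt{n})$ by Assumption \ref{arrival}(iii). On $\{Q^n_1(t)>0\}$, the first summand is a sum of $Q^n_1(t) - (R^n_1(t)-G^n_1(t)) + 1$ consecutive i.i.d.\ Class $-1$ inter-arrivals, which equals that count divided by $\lambda^n_{-1}$ plus a centered renewal fluctuation. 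Since the index difference is $O_p(\sqrt{n})$ uniformly on $[0,T]$ (by the stochastic boundedness of $\hatq^n$ together with the crude bound $R^n_1 - G^n_1 \le Q^n_1$), a random time-change applied to the renewal FCLT shows the centered piece is $O_p(n^{-3/4})$, which becomes $o_p(1)$ after the $\sqrt{n}$-scaling. Using $\lambda^n_{-1}/n \to \lambda$, the surviving leading expression is
\[
\hatw^n_1(t) = \frac{\hatq^{n,+}(t)}{\lambda} - \frac{\sqrt{n}\,(R^n_1(t)-G^n_1(t))}{\lambda^n_{-1}} + o_p(1).
\]

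The delicate remaining step is to show that $\sqrt{n}(R^n_1(t)-G^n_1(t))/\lambda^n_{-1}\Go 0$ uniformly on $[0,T]$. A customer $k$ contributes to $R^n_1(t)-G^n_1(t)$ only if $t-\|W^n_1\|_T < t^n_{1,k}\le t$ and $d^n_{1,k}\le \|W^n_1\|_T$. A bootstrap using $R^n_1-G^n_1\le Q^n_1 = O_p(\sqrt{n})$ in Steps 1--2 first yields the preliminary bound $\|W^n_1\|_T = O_p(1/\sqrt{n})$; then the number of Class 1 arrivals in a window of length $O_p(1/\sqrt{n})$ is $O_p(\sqrt{n})$, and each has patience below $\|W^n_1\|_T$ with probability at most $F^n_1(\|W^n_1\|_T) = O_p(1/\sqrt{n})$ by Assumption \ref{patience}. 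Since $d^n_{1,k}$ is independent of $w^n_{1,k}$ (the latter being determined by past arrival and patience times), the compensated sum $\sum_k\bigl(1_{\{d^n_{1,k}\le w^n_{1,k}\}} - F^n_1(w^n_{1,k})\bigr)$ is a martingale difference sum, and a maximal inequality converts the mean estimate into the uniform bound $\sup_{t\le T}(R^n_1(t)-G^n_1(t)) = O_p(1) = o_p(\sqrt{n})$. Combining the three steps gives $\hatw^n_1 - \hatq^{n,+}/\lambda \Go 0$ uniformly on $[0,T]$; the symmetric argument handles $i=-1$. The main obstacle is this last estimate: Assumption \ref{patience} supplies only a first-order scaling of $F^n_i$ near $0$, so the uniform control of $R^n_i - G^n_i$ must be obtained by marrying this scaling to a martingale/maximal inequality and to the bootstrapped bound on $\|W^n_i\|_T$, rather than by appealing to any stronger hazard-rate property.
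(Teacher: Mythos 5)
Your decomposition and the paper's reach the same intermediate statement---$\hatw^n_1 = \lambda^{-1}\hatq^{n,+} - (\sqrt{n}/\lambda^n_{-1})(R^n_1-G^n_1) + o_p(1)$ uniformly on compacts, followed by $R^n_i-G^n_i = o_p(\sqrt{n})$---but you organize the argument differently and supply a different proof of the last estimate. The paper works entirely inside the $[\,\cdot\,]^+$ brackets: it applies the Lipschitz property of $x\mapsto x^+$ to the representations of $\hatw^n_i$ (Lemma~\ref{decomp}) and $\hatq^{n,\pm}/\lambda$ (from \eqref{queue-length}), and then cancels term by term using the joint FCLT of Lemma~\ref{martingale-conv-1} together with Lemma~\ref{abandon-equal}; the latter is proved via the first-passage time $\tau^n_i(t)=\inf\{s:\,s+W^n_i(s)>t\}$, the monotonicity $w^n_{i,k}+t^n_{i,k}\le w^n_{i,l}+t^n_{i,l}$, and the integral characterization of $\hatr^n_i$ from Lemma~\ref{abandon-5}. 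You instead split on $\{Q^n_1(t)>0\}$ versus $\{Q^n_1(t)=0\}$ to dispose of the positive part, apply the renewal FCLT directly to the random index $K^n(t)+1$, and establish $R^n_i-G^n_i=o_p(\sqrt{n})$ by a bootstrap (first $\|W^n_i\|_T=O_p(n^{-1/2})$ from $R^n_i-G^n_i\le Q^n_i$, then a window-counting bound on customers whose pending abandonment has not yet fired). Your route is more self-contained and avoids the $\tau^n_i$ construction and the monotonicity lemma entirely; the paper's route pays up front for modular lemmas that are reused verbatim in Theorems~\ref{diffusion-thm} and~\ref{stability}. Two small cautions on your write-up: the final claim $\sup_{t\le T}(R^n_1-G^n_1)=O_p(1)$ is stronger than what a plain maximal inequality delivers (a scan-statistic over sliding windows of width $O(n^{-1/2})$ of an $O(\sqrt{n})$-mean counting process), but the $o_p(\sqrt{n})$ you actually need does follow, e.g., from the oscillation of the FLLN limit of $n^{-1/2}\sum_{k\le N^n_1(\cdot)}1_{\{d^n_{1,k}\le K/\sqrt{n}\}}$ over vanishing windows; and pre-time-$0$ customers contribute to $R^n_1(t)-G^n_1(t)$ for $t\le\|W^n_1\|_T$ and should be absorbed into the counting bound explicitly.
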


\begin{theorem}\label{diffusion-thm}
Under the same conditions as in Theorem \ref{relation-thm}, we have $\hatq^n \Go Q$ as $n\to\infty$, where for $t\ge 0,$
\begin{equation}\label{sie}
Q(t) = q + \sqrt{\lambda^3(\sigma_1^2+\sigma_{-1}^2)}W(t) + ct - \lambda \int_0^t H_1\left(\frac{Q^+(s)}{\lambda}\right) \D s + \lambda \int_0^t H_{-1}\left(\frac{Q^-(s)}{\lambda}\right) \D s, 
\end{equation}
with $W$ a standard Brownian motion independent of $q$.
\end{theorem}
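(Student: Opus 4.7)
The plan is to diffusion-scale the balance equation \eqref{queue-length}, identify the weak limit of each term, and then invoke pathwise uniqueness of the resulting SDE. Dividing \eqref{queue-length} by $\sqrt{n}$ and centering the renewal processes gives
\[
\hatq^n(t) = \hatq^n(0) + \bigl(\hatn_1^n(t) - \hatn_{-1}^n(t)\bigr) + \tfrac{\lambda_1^n - \lambda_{-1}^n}{\sqrt{n}} t - \hatg_1^n(t) + \hatg_{-1}^n(t).
\]
Under Assumption \ref{arrival}, the renewal FCLT together with the independence of the two arrival streams yields $\hatn_1^n - \hatn_{-1}^n \Go \sqrt{\lambda^3(\sigma_1^2+\sigma_{-1}^2)}\,W$ for a standard Brownian motion $W$ (independent of $q$), while Assumption \ref{htc} sends the deterministic drift to $ct$ locally uniformly.

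The heart of the argument is to establish, for $i=\pm 1$,
\[
\hatg_i^n(\cdot) - \lambda \int_0^{\cdot} H_i\!\left(\tfrac{\hatq^{n,i}(s)}{\lambda}\right) \D s \Go 0,
\]
where I abbreviate $\hatq^{n,1}=\hatq^{n,+}$ and $\hatq^{n,-1}=\hatq^{n,-}$. Since $d_{i,k}^n$ is independent of the history at $t_{i,k}^n-$ (and hence of $w_{i,k}^n$), a martingale decomposition yields $G_i^n \approx \sum_{k=1}^{N_i^n} F_i^n(w_{i,k}^n) + \Upsilon_i^n$, where $\Upsilon_i^n$ has predictable quadratic variation of order $\sqrt n$: indeed $F_i^n(w_{i,k}^n) = O_p(1/\sqrt n)$ by Assumption \ref{patience} combined with Theorem \ref{relation-thm} and stochastic boundedness of $\hatq^n$, and $N_i^n = O_p(n)$, so $\Upsilon_i^n/\sqrt n \Go 0$ by Doob's inequality. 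Applying the limit $\sqrt n F_i^n(s/\sqrt n) \to H_i(s)$ together with local Lipschitz continuity of $H_i$,
\[
\hatg_i^n(t) \approx \frac{1}{n}\sum_{k=1}^{N_i^n(t)} H_i\bigl(\hatw_i^n(t_{i,k}^n-)\bigr) \approx \lambda \int_0^t H_i\bigl(\hatw_i^n(s)\bigr) \D s,
\]
the Riemann-sum step being justified because interarrival gaps are $O(1/n)$ and $\hatw_i^n$ is tight by Theorem \ref{relation-thm}. Secondary bookkeeping, namely the pre-time-$0$ customers ($k\le 0$) and those committed to abandon but still waiting at time $t$, contributes $o_p(1)$ after scaling because abandoning customers have patience $O_p(1/\sqrt n)$. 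A final invocation of Theorem \ref{relation-thm} replaces $\hatw_i^n$ by $\hatq^{n,\pm}/\lambda$ under the continuity of $H_i$.

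Assembling these pieces, $\hatq^n$ satisfies
\[
\hatq^n(t) = \hatq^n(0) + M^n(t) + \tfrac{\lambda_1^n - \lambda_{-1}^n}{\sqrt n}t - \lambda \int_0^t H_1\!\left(\tfrac{\hatq^{n,+}(s)}{\lambda}\right) \D s + \lambda \int_0^t H_{-1}\!\left(\tfrac{\hatq^{n,-}(s)}{\lambda}\right) \D s + \varepsilon^n(t),
\]
with $M^n \Go \sqrt{\lambda^3(\sigma_1^2+\sigma_{-1}^2)}W$, the drift converging to $ct$, and $\|\varepsilon^n\|_T \to 0$ in probability for every $T>0$. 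Local Lipschitz continuity of $H_1, H_{-1}$ in Assumption \ref{patience} secures pathwise uniqueness and strong existence for \eqref{sie} via a Gr\"onwall argument on bounded intervals, so a standard tightness-plus-identification argument upgrades the pre-limit identity to $\hatq^n \Go Q$. The main obstacle is the scaling analysis of the abandonment processes: the martingale control, the bookkeeping for pre-existing customers and delayed departures, and the Riemann-sum and waiting-time substitutions must all be made uniform on compact time intervals, and they lean essentially on both the stochastic boundedness of $\hatq^n$ and Theorem \ref{relation-thm}.
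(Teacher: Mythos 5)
Your plan and the paper's proof of Theorem~\ref{diffusion-thm} differ substantially in form, though they rest on the same underlying ingredients. In the paper, Theorem~\ref{diffusion-thm} is a two-line corollary: Theorem~\ref{waiting-convergence} establishes $(\hatw^n_1,\hatw^n_{-1})\Go(\Psi_1,\Psi_{-1})(X)$ (via $C$-tightness of the waiting-time processes in Lemma~\ref{waiting-tight}, the martingale lemmas, and the continuous-mapping Proposition~\ref{unique-map}), Theorem~\ref{relation-thm} converts this into $\hatq^n\Go\lambda(\Psi_1(X)-\Psi_{-1}(X))$, and the defining relations \eqref{eq:51}--\eqref{eq:52} together with $Q^{\pm}=\lambda W_{\pm 1}$ rewrite the limit in the form \eqref{sie}. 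You instead work directly with the queue balance equation \eqref{queue-length}, aiming to show a pre-limit identity for $\hatq^n$ with a vanishing remainder, and then close with tightness-plus-identification and pathwise uniqueness of \eqref{sie}. This is a legitimate alternative route, but it effectively forces you to re-derive the content of Theorem~\ref{waiting-convergence} and of Lemmas~\ref{abandon-5}--\ref{abandon-equal} in queue-length coordinates, rather than quoting them.

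The places where your sketch defers nontrivial work are precisely the places the paper invests effort. (1) Your ``$G^n_i\approx\sum F^n_i(w^n_{i,k})+\Upsilon^n_i$'' is really a decomposition of $R^n_i$, and the passage from $R^n_i$ to $G^n_i$ (the ``delayed departures'') is not mere bookkeeping: it is Lemma~\ref{abandon-equal}, whose proof uses a monotonicity property of $k\mapsto w^n_{i,k}+t^n_{i,k}$ and the time change $\tau^n_i$, and in turn relies on Lemma~\ref{abandon-5} and the $C$-tightness of $\hatw^n_i$. (2) The Riemann-sum step $\frac{1}{n}\sum_{k\le N^n_i(t)}H_i(\hatw^n_i(t^n_{i,k}-))\approx \lambda_i\int_0^t H_i(\hatw^n_i(s))\D s$ is the content of \eqref{eq:w52}, which the paper justifies by citing Lemma~2.4 of \cite{daiwill96} after a Skorohod-representation argument along a convergent subsequence; your appeal to ``interarrival gaps are $O(1/n)$'' glosses over the need for a.s.\ uniform convergence along subsequences, which is why the subsequence structure appears in the paper. (3) Your well-posedness claim for \eqref{sie} via ``Gr\"onwall on bounded intervals'' needs the globalization carried out in Proposition~\ref{unique-map} (which also establishes, crucially for the identification step, continuity of the solution map); $H_i$ is only \emph{locally} Lipschitz, so the a priori bound \eqref{eq:56} before the Picard iteration is essential. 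None of these gaps are fatal---each is fillable and is filled in the paper---but as written your argument is more a plan for re-proving the paper's Section~\ref{sec:waiting}--\ref{sec:reneging} machinery than a self-contained proof, whereas the paper's proof of Theorem~\ref{diffusion-thm} itself is a short deduction once that machinery is in place.
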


\begin{theorem}\label{stability} Let $Q$ be as in Theorem \ref{diffusion-thm}. Assume that  
\begin{align*}
\lim_{x\to\infty} H_1(x) > \frac{c}{\lambda}, \ \ \mbox{if $c\ge 0$,}
\end{align*}
and 
\begin{align*}
 \lim_{x\to\infty} H_{-1}(x) > -\frac{c}{\lambda}, \ \ \mbox{if $c\le 0$.}
\end{align*}
Then $Q$ has a unique stationary distribution with density function given by 
\begin{align*}
\pi(\D x) = \begin{cases}
C_0 \exp\left\{- \frac{2}{\lambda^3(\sigma^2_1+\sigma^2_{-1})} \left(-cx + \lambda^2 \int_0^{\frac{x}{\lambda}} H_1(u) \D u  \right) \right\} \D x, & \mbox{if $x \ge 0$}, \\
C_0 \exp\left\{- \frac{2}{\lambda^3(\sigma^2_1+\sigma^2_{-1})} \left(-cx + \lambda^2 \int_0^{-\frac{x}{\lambda}} H_{-1}(u) \D u \right) \right\} \D x, & \mbox{if $x < 0$},
\end{cases}
\end{align*}
where $C_0$ is a positive constant such that $\int_{\mathbb{R}}\pi(\D x) =1.$

\end{theorem}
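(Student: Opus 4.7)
The plan is to view equation \eqref{sie} as the one-dimensional It\^o SDE
$$\D Q(t) = b(Q(t))\,\D t + \sigma\,\D W(t), \qquad \sigma := \sqrt{\lambda^3(\sigma_1^2+\sigma_{-1}^2)},$$
with state-dependent drift
$$b(x) := c - \lambda H_1(x/\lambda)\,\mathbf{1}_{\{x\ge 0\}} + \lambda H_{-1}(-x/\lambda)\,\mathbf{1}_{\{x<0\}}.$$
Setting $t=0$ in Assumption \ref{patience} forces $H_i(0)=0$, so $b$ is continuous on $\mathbb{R}$ and, by the local Lipschitz continuity of $H_{\pm 1}$, locally Lipschitz on each half-line. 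Standard SDE theory therefore yields a unique strong solution up to a possible explosion time, and non-explosion will be a byproduct of the integrability analysis below.

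Next I would apply the classical scale-function and speed-measure calculus for one-dimensional diffusions. Anchor at $x_0 = 0$ and set the scale density $s'(x) = \exp\bigl(-(2/\sigma^2)\int_0^x b(y)\,\D y\bigr)$ and speed density $m'(x) = 2/(\sigma^2 s'(x))$. The substitutions $u=y/\lambda$ for $x\ge 0$ and $u=-y/\lambda$ for $x<0$ give
$$\int_0^x b(y)\,\D y \;=\; \begin{cases} cx - \lambda^2 \int_0^{x/\lambda} H_1(u)\,\D u, & x\ge 0, \\[2pt] cx - \lambda^2 \int_0^{-x/\lambda} H_{-1}(u)\,\D u, & x<0, \end{cases}$$
so $m'(x)$ is a positive constant multiple of precisely the function appearing inside the exponent of $\pi(\D x)$ in the statement. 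Once we establish that (a) $\int_{\mathbb{R}} m'(x)\,\D x < \infty$ and (b) both boundaries $\pm\infty$ are inaccessible in Feller's classification, the standard one-dimensional theory yields that $Q$ is positive recurrent with unique stationary density $\pi(\D x) = m'(x)/\int_{\mathbb{R}} m'(y)\,\D y$, which after normalization coincides with the claimed formula.

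The substantive step is verifying (a) and (b) from the hypotheses on $\lim_{x\to\infty} H_{\pm 1}$, and this is where the main care is needed because of the two sign regimes for $c$. For $x\to+\infty$: if $c<0$, then $b(y)\le c<0$ for all $y\ge 0$ (using $H_1\ge 0$), so $\int_0^x b(y)\,\D y\le cx$ drifts to $-\infty$ linearly; if instead $c\ge 0$, the hypothesis $\lim_{y\to\infty} H_1(y) > c/\lambda$ lets me pick $\varepsilon>0$ and $x_\varepsilon$ with $b(y)\le -\lambda\varepsilon$ for $y\ge x_\varepsilon$, giving $\int_0^x b(y)\,\D y \le -\lambda\varepsilon\,x + O(1)$. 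In either case $m'$ decays exponentially at $+\infty$, so $\int^{\infty} m'(x)\,\D x<\infty$ and $+\infty$ is a natural or entrance boundary. A symmetric argument using the condition on $H_{-1}$ handles $x\to-\infty$, with the borderline case $c=0$ requiring both hypotheses at once. Feller's test for explosion then ensures non-explosion, and the classical ergodic theorem for regular one-dimensional diffusions (finite speed measure plus inaccessible boundaries implies positive recurrence and a unique stationary distribution) delivers the result.
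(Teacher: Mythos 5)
Your proof is correct but takes a genuinely different route from the paper's. The paper works with the generator
$\mathcal{A}f(x) = \tfrac{1}{2}\lambda^3(\sigma_1^2+\sigma_{-1}^2)f''(x) + b(x)f'(x)$
and verifies by integration by parts that $\int_{\mathbb{R}}\mathcal{A}f\,\D\pi = 0$ for all $f\in C^2_0(\RR)$, invoking Proposition 9.2 of Ethier--Kurtz for the characterization of stationary distributions and citing irreducibility for uniqueness. You instead derive the stationary density constructively via the scale/speed-measure calculus: after checking $H_i(0)=0$, you identify $\pi$ as the (normalized) speed density $m'(x)\propto \exp\bigl((2/\sigma^2)\int_0^x b(y)\,\D y\bigr)$, use the hypotheses on $\lim_{y\to\infty}H_{\pm 1}(y)$ to get exponential decay of $m'$ at $\pm\infty$ (hence a finite speed measure), classify $\pm\infty$ as inaccessible, and conclude by the ergodic theorem for regular one-dimensional diffusions. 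Both are standard and sound. Your route has the advantage of making the normalizability of $\pi$ and the non-explosion of $Q$ explicit --- points the paper passes over silently (the paper asserts the existence of the normalizing constant $C_0$ without spelling out why $\int_{\RR}\pi<\infty$, and the integration-by-parts step implicitly uses that the density vanishes at $\pm\infty$, which again requires the same decay estimate you carry out). It also delivers uniqueness directly from positive recurrence, rather than through a separate irreducibility argument. The paper's verification-style proof is shorter once one is willing to take normalizability and boundary behavior for granted. Two small remarks on your write-up: your assertion that $b$ is locally Lipschitz ``on each half-line'' undersells it slightly --- since $b(0^+)=b(0^-)=c$ and $H_{\pm 1}$ are locally Lipschitz nonnegative functions vanishing at $0$, one can check $b$ is in fact locally Lipschitz on all of $\RR$, which simplifies the existence/uniqueness appeal; and in the borderline case $c=0$ you should note explicitly that both hypotheses on $H_1$ and $H_{-1}$ are in force simultaneously, which is exactly what you need for decay at both ends.
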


\begin{corollary}\label{cor}
Under Assumptions \ref{arrival}, \ref{htc} and \ref{hazard},  $\hatq^n$ is stochastically bounded, and if $\hatq^n(0)$ converges weakly to some random variable $q$ as $n\to\infty$, then $\hatq^n \Go Q$ as $n\to\infty$, where 
\bes
Q(t) = q + \sqrt{\lambda^3(\sigma_1^2+\sigma_{-1}^2)}W(t) + ct - \lambda \int_0^t \int_0^{\frac{Q^+(s)}{\lambda}} h_1(u) \D u \D s + \lambda \int_0^t \int_0^{\frac{Q^-(s)}{\lambda}} h_{-1}(u)\D u \D s, \ t\ge 0,
\ees
with $W$ a standard Brownian motion. Furthermore, assume that  
\begin{align*}
\int_0^\infty h_1(u) \D u >  \frac{c}{\lambda}, \ \ \mbox{if $c\ge 0$,}
\end{align*}
and 
\begin{align*}
\int_0^\infty h_{-1}(u) \D u > - \frac{c}{\lambda}, \ \ \mbox{if $c\le 0$.}
\end{align*}
Then $Q$ has a unique stationary distribution with density function given by 
\begin{align*}
\pi(\D x) = \begin{cases}
C_0 \exp\left\{- \frac{2}{\lambda^3(\sigma^2_1+\sigma^2_{-1})} \left(-cx + \lambda \int_0^x \int_0^{\frac{s}{\lambda}} h_1(u) \D u \D s \right) \right\} \D x, & \mbox{if $x \ge 0$}, \\
C_0 \exp\left\{- \frac{2}{\lambda^3(\sigma^2_1+\sigma^2_{-1})} \left(-cx + \lambda \int_0^{-x} \int_0^{\frac{s}{\lambda}} h_{-1}(u) \D u \D s \right) \right\} \D x, & \mbox{if $x < 0$},
\end{cases}
\end{align*}
where $C_0$ is a positive constant such that $\int_{\mathbb{R}}\pi(\D x) =1.$

\end{corollary}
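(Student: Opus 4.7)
\medskip

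\noindent\textbf{Proof proposal for Corollary \ref{cor}.}

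The plan is to reduce the corollary to the previous three theorems by verifying (i) that Assumption \ref{hazard} implies Assumption \ref{patience} with $H_i(x)=\int_0^x h_i(u)\,\D u$, and (ii) that $\hatq^n$ is stochastically bounded under Assumptions \ref{arrival}, \ref{htc}, \ref{hazard}. Step (i) is essentially Remark \ref{remark}(iv): since $F^n_i(t)=1-\exp\{-\int_0^t h^n_i(u)\,\D u\}$, the substitution $u=v/\sqrt{n}$ and the pointwise convergence $h^n_i(v/\sqrt{n})\to h_i(v)$ together with the uniform local boundedness $\sup_n\|h^n_i\|_t<\infty$ allow dominated convergence to give $\sqrt{n}\,F^n_i(t/\sqrt{n})\to \int_0^t h_i(u)\,\D u=H_i(t)$; local Lipschitz continuity of $H_i$ follows from the local boundedness of $h_i$ (which is inherited from $\sup_n\|h^n_i\|_t<\infty$ via Fatou). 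Once (i) and (ii) are in hand, Theorems \ref{relation-thm} and \ref{diffusion-thm} yield the claimed weak limit, and Theorem \ref{stability} with $H_i(x)=\int_0^x h_i(u)\,\D u$ gives the stationary distribution after rewriting $\int_0^{x/\lambda} H_1(u)\,\D u = \tfrac{1}{\lambda^2}\int_0^x\int_0^{s/\lambda}\lambda^2 h_1(u)\,\D u\,\D s$ (and analogously for the negative half-line), matching the density formula in the statement.

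The main obstacle is step (ii), the stochastic boundedness of $\hatq^n$. I would proceed as follows. Starting from the balance \eqref{queue-length}, write
\bes
\hatq^n(t)=\hatq^n(0)+\hatn^n_1(t)-\hatn^n_{-1}(t)+\frac{\lambda^n_1-\lambda^n_{-1}}{\sqrt{n}}\,t-\hatg^n_1(t)+\hatg^n_{-1}(t).
\ees
Under Assumption \ref{arrival}, the FCLT for renewal processes gives $(\hatn^n_1,\hatn^n_{-1})\Rightarrow$ a pair of independent Brownian motions with zero drift, and by Assumption \ref{htc} the deterministic term tends to $ct$. So the only source of potential blow-up in $\hatq^n$ is the balance between $\hatg^n_1$ and $\hatg^n_{-1}$. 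The idea is to show that the abandonment processes act as a restoring force that prevents blow-up: when $\hatq^{n,+}(t)$ becomes large the Class~$1$ offered waiting times of the long-waiting customers stay bounded below by a multiple of $\hatq^{n,+}(t)/\lambda^n_1$, and the hazard rate lower bound $h^n_1$ near zero (together with Assumption \ref{hazard}) produces abandonments at a rate that pulls $\hatq^{n,+}$ back down; symmetric statement for $\hatq^{n,-}$.

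Concretely, I would fix $T,K>0$ and analyze the event $\{\|\hatq^n\|_T>K\}$. Using a first-passage decomposition at the first time $\tau^n_K$ that $|\hatq^n|$ exceeds $K$, I would bound $\hatg^n_i$ from below on $[\tau^n_K, (\tau^n_K+\delta)\wedge T]$ by constructing a suitable counting sub-process: customers arriving during this interval on the majority side must experience offered waiting times comparable to $K/(\sqrt{n}\lambda^n_i)$ by the representation \eqref{vwt} and \eqref{owt}, and hence abandon with probability $F^n_i$ of that argument, which by Assumption \ref{hazard} and step (i) is of order $H_i(K/\lambda)/\sqrt{n}$. Summing over the $\mathcal{O}(n\delta)$ arrivals gives $\hatg^n_i\bigl((\tau^n_K+\delta)\wedge T\bigr)-\hatg^n_i(\tau^n_K)\gtrsim \sqrt{n}\delta\lambda H_i(K/\lambda)$ on the majority side, while the martingale-plus-compensator decomposition of $G^n_i$ (with compensator expressed via $h^n_i$ and the waiting times) controls fluctuations. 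Combined with the Brownian-plus-drift behavior of the arrival part, taking $K\to\infty$ forces the probability of $\{\|\hatq^n\|_T>K\}$ to zero. The cleanest implementation uses the offered-waiting-time representation of Theorem \ref{relation-thm}, but since that theorem presumes stochastic boundedness one instead works pre-limit with the explicit waiting-time bound in \eqref{owt} and an iterative stopping-time argument; this combinatorial/martingale bookkeeping is the part I expect will require the most care and where the paper's appendix will do the heavy lifting.
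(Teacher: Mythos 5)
You correctly reduce the Corollary to (i) Remark~\ref{remark}(iv) (Assumption~\ref{hazard} $\Rightarrow$ Assumption~\ref{patience} with $H_i(x)=\int_0^x h_i$) plus (ii) stochastic boundedness of $\hatq^n$, then invoke Theorems~\ref{relation-thm}--\ref{stability}; this outer structure matches the paper. The gap is in your treatment of step (ii). The paper does not prove stochastic boundedness by a first-passage/``restoring-force'' argument at all. Instead (Lemmas~\ref{martingale}--\ref{queue-tightness}) it constructs the $\{\clg^n_{i,t}\}$-compensator $A^n_i$ of $G^n_i$ in the potential-queue-measure formalism of \cite{kang10}, writes $\hatg^n_i=\hatm^n_i+\hata^n_i$ with $\hatm^n_i$ a local martingale, and makes the crucial elementary observation that at each instant $s$ the number of Class~$i$ customers that can abandon is exactly $Q^n_i(s)$, so the compensator obeys the \emph{upper} bound $\hata^n_i(t)\le\|h^n_i\|_t\int_0^t\hatq^n_i(s)\,\D s$. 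The quadratic variation of $\hatm^n_i$ is then $O(n^{-1/2})\cdot\|h^n_i\|_t\int_0^t\hatq^n_i(s)\,\D s\to 0$, so $\hatm^n_i\Go 0$, and the resulting integral inequality $\|\hatq^n\|_t\le|\hatq^n(0)|+\sum_i(\|\hatn^n_i\|_t+\|\hatm^n_i\|_t)+|\lambda^n_1-\lambda^n_{-1}|t/\sqrt{n}+\sum_i\|h^n_i\|_t\int_0^t\|\hatq^n\|_s\,\D s$ closes via Gronwall. No lower bound on abandonments is used or needed.

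Your sketch, by contrast, tries to \emph{lower}-bound $\hatg^n_i$ on the majority side after a first passage to level $K$. This is the correct intuition for ergodicity of the limit $Q$ (Theorem~\ref{stability}), but it is both unnecessary and problematic for finite-horizon tightness: (a) customers already waiting at time $\tau^n_K$ arrived when the queue was smaller, so their offered waiting times need not be uniformly comparable to $K/(\sqrt n\lambda_i)$; (b) $H_i$ need not be unbounded (only $\lim_{x\to\infty}H_1(x)>c/\lambda$ is assumed in Theorem~\ref{stability}), so sending $K\to\infty$ does not force the abandonment increment to dominate the Brownian/drift part on short intervals. Both issues evaporate in the paper's approach, because stochastic boundedness over $[0,T]$ only requires that the abandonment \emph{rate} be at most linear in the current queue size, which is immediate from $\sup_n\|h^n_i\|_t<\infty$. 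You should replace the first-passage scheme by the compensator-plus-Gronwall argument; that is also where the explicit formula $A^n_i(t)=\sum_k\int_0^{(t-t^n_{i,k})\wedge w^n_{i,k}\wedge d^n_{i,k}}h^n_i(u)\,\D u$ (Lemma~\ref{compensator}) is needed.
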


\section{Proofs }\label{proofs}
We first re-characterize the offered waiting times in the following lemma, which will be used in other proofs. Recall from \eqref{owt-1}, \eqref{owt-2} and \eqref{vwt} that for $i=\pm1$ and $k\in\NN,$
\[
w_{i,k}^n = \left( t^n_{-i, k+Q^n_i(0)-R^n_i(t^n_{i,k}-)-Q^n_{-i}(0)+R^n_{-i}(t^n_{i,k}-)} - t^n_{i,k}\right)1_{\{Q^n_{-i}(t^n_{i,k}-)=0\}}.
\]
Recall from Remark \ref{remark_1} that $k-R^n_{-1}(t^n_{-1,k}-)-Q^n_1(0)+R^n_{1}(t^n_{-1,k}-)$ could be negative or zero. 
\begin{lemma}\label{reformulate}
For $k\in\NN,$ assume $k-R^n_{-1}(t^n_{-1,k}-)-Q^n_1(0)+R^n_{1}(t^n_{-1,k}-)>0$. Then for $i=\pm1$, when $Q^n_{-i}(t^n_{i,k}-)=0$,
 \begin{equation}\label{eq:2}
 t_{-i, k+Q^n_i(0)-R^n_i(t^n_{i,k}-)-Q^n_{-i}(0)+R^n_{-i}(t^n_{i,k}-)} \ge t^n_{i,k},
 \end{equation}
and when $Q^n_{-i}(t^n_{i,k}-)>0$,
 \begin{equation}\label{eq:3}
 t^n_{-i, k+Q^n_i(0)-R^n_i(t^n_{i,k}-)-Q^n_{-i}(0)+R^n_{-i}(t^n_{i,k}-)} \le t^n_{i,k}.
 \end{equation}
 If $k-R^n_{-1}(t^n_{-1,k}-)-Q^n_1(0)+R^n_{1}(t^n_{-1,k}-) \le 0$, then  $Q^n_{1}(t^n_{-1,k}-)>0$, and $w^n_{-1,k}=0$.
Consequently, we have for $i=\pm 1$, and $k\in\NN$,
\begin{equation}\label{eq:4}
w^n_{i,k} = \left[t^n_{-i, k+Q^n_i(0)-R^n_i(t^n_{i,k}-)-Q^n_{-i}(0)+R^n_{-i}(t^n_{i,k}-)} - t^n_{i,k}\right]^+.
\end{equation}

\end{lemma}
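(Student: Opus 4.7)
The plan is to prove \eqref{eq:4} by case analysis on $Q^n_{-i}(t^n_{i,k}-)$ and on the sign of
\[
K := k + Q^n_i(0) - R^n_i(t^n_{i,k}-) - Q^n_{-i}(0) + R^n_{-i}(t^n_{i,k}-),
\]
interpreting $K$ as the paper-index of the class $-i$ customer that the $k$-th class $i$ customer would pair with under the FCFS rule if it were patient. Write $A^n_j(t) := Q^n_j(0) + N^n_j(t)$ for the cumulative number of class $j$ arrivals up to time $t$, and $M^n(t)$ for the cumulative number of completed matches; since every arrival is matched, abandoned, or waiting,
\[
A^n_j(t) = M^n(t) + G^n_j(t) + Q^n_j(t), \quad j = \pm 1,
\]
and the double-ended structure gives $Q^n_1(t)\,Q^n_{-1}(t) = 0$.

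In the case $Q^n_{-i}(t^n_{i,k}-) = 0$, splitting the class $i$ waiters at $t^n_{i,k}-$ into future-matchers and future-abandoners, a direct count shows that the $k$-th class $i$ customer (assumed patient) is the $j$-th class $i$ matcher with $j = k + Q^n_i(0) - R^n_i(t^n_{i,k}-)$; by FCFS it is paired with the $j$-th class $-i$ matcher. Writing $L$ for the arrival rank (oldest first, starting at $1$) of that class $-i$ matcher, the target is $L = j + R^n_{-i}(t^n_{i,k}-)$. The principal obstacle is to rule out abandonments among class $-i$ customers whose arrival rank lies in $(A^n_{-i}(t^n_{i,k}-), L]$; the key observation is that during the hypothetical wait of the $k$-th class $i$ customer the class $i$ queue always contains at least that customer, so every class $-i$ arrival in this interval matches instantaneously and cannot abandon. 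This forces $L > A^n_{-i}(t^n_{i,k}-) \ge Q^n_{-i}(0)$, whence $K = L - Q^n_{-i}(0) \ge 1$ and $t^n_{-i,K} \ge t^n_{i,k}$, which is \eqref{eq:2}.

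In the case $Q^n_{-i}(t^n_{i,k}-) > 0$, the double-ended constraint forces $Q^n_i(t^n_{i,k}-) = 0$, so $R^n_i(t^n_{i,k}-) = G^n_i(t^n_{i,k}-)$ and the balance identity collapses $j$ to $M^n(t^n_{i,k}-) + 1$. The $k$-th class $i$ customer matches at $t^n_{i,k}$ with the head of the class $-i$ queue; a future-abandoning head would be forced to leave strictly before $t^n_{i,k}$ (contradicting its presence at $t^n_{i,k}-$), so the number of waiting class $-i$ future-matchers is at least one. Applying the class $-i$ balance identity then yields $L \le A^n_{-i}(t^n_{i,k}-)$, so $t^n_{-i,K} \le t^n_{i,k}$, giving \eqref{eq:3}.

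In the remaining case $K \le 0$ (which can only occur for $i = -1$), the same FCFS count identifies the intended partner of the $k$-th class $-1$ customer as a class $1$ customer with paper-index $\le 0$, i.e., one belonging to the initial queue $Q^n_1(0)$. Such a customer is matchable at $t^n_{-1,k}$ only if still waiting at $t^n_{-1,k}-$, which forces $Q^n_1(t^n_{-1,k}-) > 0$ and hence $w^n_{-1,k} = 0$ via \eqref{owt-2}. The convention $t^n_{1,K} = 0$ for $K \le 0$ then gives $[t^n_{1,K} - t^n_{-1,k}]^+ = 0 = w^n_{-1,k}$, so assembling the three cases produces \eqref{eq:4}. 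The main obstacle throughout is the bookkeeping in Case 1; the remaining cases are routine once the correspondence between $K$ and the FCFS-matched partner has been established.
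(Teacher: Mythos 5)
Your proposal takes a genuinely different route from the paper, and the route you chose introduces a gap. The paper never tries to identify the FCFS partner of the $k$-th class $i$ customer; it works purely with the index
$K = k+Q^n_i(0)-R^n_i(t^n_{i,k}-)-Q^n_{-i}(0)+R^n_{-i}(t^n_{i,k}-)$
and the single balance inequality: if $Q^n_j(t)=0$ then
$Q^n_j(0)+N^n_j(t)-R^n_j(t) \le Q^n_{-j}(0)+N^n_{-j}(t)-R^n_{-j}(t)$,
which follows because the left side equals the number of matched pairs (using $R^n_j=G^n_j$ when $Q^n_j=0$) and the right side is the number of matched pairs plus $Q^n_{-j}(t)-(R^n_{-j}(t)-G^n_{-j}(t))\ge 0$. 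Applying this with $j=-i$ at $t=t^n_{i,k}-$ and using $N^n_i(t^n_{i,k}-)=k-1$ gives directly $K\ge N^n_{-i}(t^n_{i,k}-)+1$, hence \eqref{eq:2}; the contrapositive of that same bound immediately handles the $K\le 0$ case without any reasoning about the initial queue; and a parallel one-sided argument at $t=t^n_{i,k}$ gives \eqref{eq:3}. This is a three-line counting argument once the balance inequality is stated.

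Your Case 1, by contrast, hinges on establishing $L=j+R^n_{-i}(t^n_{i,k}-)$, i.e.\ that none of the class $-i$ arrivals with rank in $(A^n_{-i}(t^n_{i,k}-),L]$ abandon, and the justification offered (``during the hypothetical wait the class $i$ queue contains the patient $k$-th customer, so class $-i$ arrivals match instantly'') has two problems. First, there is a circularity: $L$ is defined as the arrival rank of the $j$-th class $-i$ matcher, which already depends on which class $-i$ customers abandon, yet you then argue about abandonment among ranks up to $L$. Second, the hypothetical where the $k$-th class $i$ customer is made patient is not the real system in which $R^n_{-i}$ and $G^n_{-i}$ are defined; one would have to argue carefully that making one customer patient cannot change the abandonment status of the class $-i$ customers being counted, and that step is not carried out. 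A similar issue appears in your $K\le 0$ case, where ``the intended partner is an initial class $1$ customer, hence still waiting at $t^n_{-1,k}-$'' is asserted rather than derived — that partner could in principle already have matched or abandoned, and it is exactly the balance inequality (applied as a contrapositive) that rules this out cleanly. In short, the partner-identification route redoes work that the paper already did in Section~2 to derive \eqref{owt-1}--\eqref{owt-2}, and the added steps are where the gaps appear; the intended proof of \eqref{eq:2}--\eqref{eq:3} is a pure index inequality and does not require knowing who matches with whom.
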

\begin{proof} Let $i=\pm 1$. For $t\ge 0$, when $Q^n_i(t) =0$, we observe that $R^n_i(t) = G^n_i(t)$, and $Q^n_i(0) + N^n_i(t) - R^n_i(t)$ is the number of the matched pairs that leave the system by time $t$. On the other hand, if $Q^n_i(t)>0$, then $Q^n_i(0) + N^n_i(t) - R^n_i(t)$ will be greater than or equal to the number of the matched pairs that leave the system by time $t$. So when $Q^n_i(t) =0$, we have $Q^n_{-i}(t)\ge 0$, and 
\begin{equation}
Q^n_i(0) + N^n_i(t) - R^n_i(t) \le Q^n_{-i}(0) + N^n_{-i}(t) - R^n_{-i}(t).
\end{equation}
Thus for $k\in\NN$, if  $Q^n_{-i}(t^n_{i,k}-)=0$, we have 
\bes
N^n_{-i}(t^n_{i,k}-)+Q^n_{-i}(0)-R^n_{-i}(t^n_{i,k}-) \le N^n_{i}(t^n_{i,k}-)+Q^n_{i}(0) - R^n_{i}(t^n_{i,k}-) = k-1 +Q^n_{i}(0) - R^n_{i}(t^n_{i,k}-).
 \ees
Thus 
 \begin{align}\label{imp-eqn}
 k +Q^n_{i}(0) - R^n_{i}(t^n_{i,k}-) - Q^n_{-i}(0)+R^n_{-i}(t^n_{i,k}-) \ge N^n_{-i}(t^n_{i,k}-)+1.
 \end{align}
 From \eqref{imp-eqn}, if $k-R^n_{-1}(t^n_{-1,k}-)-Q^n_1(0)+R^n_{1}(t^n_{-1,k}-) \le 0$, then  $Q^n_{1}(t^n_{-1,k}-)$ cannot be zero, and must be positive. Now assume $k-R^n_{-1}(t^n_{-1,k}-)-Q^n_1(0)+R^n_{1}(t^n_{-1,k}-) > 0.$ Again from \eqref{imp-eqn}, we have
  \begin{align*}
 t^n_{-i,k +Q^n_{i}(0) - R^n_{i}(t^n_{i,k}-) - Q^n_{-i}(0)+R^n_{-i}(t^n_{i,k}-)} \ge t^n_{-i,N^n_{-i}(t^n_{i,k}-)+1}\ge t^n_{i,k}.
 \end{align*}
 This shows \eqref{eq:2}. When $Q^n_{-i}(t^n_{i,k}-)>0$, we have $Q^n_{i}(t^n_{i,k}-)=0$, which implies $Q^n_{i}(t^n_{i,k})=0$ and $G^n_i(t^n_{i,k})=G^n_i(t^n_{i,k}-)=R^n_i(t^n_{i,k}-).$ Similar to the above analysis, $N^n_i(t^n_{i,k}) + Q^n_i(0)-G^n_i(t^n_{i,k}) = k+Q^n_i(0)-R^n_i(t^n_{i,k}-)$ is the number of matched pairs that leave the system by time $t^n_{i,k}$. Then we have 
 \begin{align*}
  k+Q^n_i(0)-R^n_i(t^n_{i,k}-) \le N^n_{-i}(t^n_{i,k}) + Q^n_{-i}(0)- R^n_{-i}(t^n_{i,k}) \le N^n_{-i}(t^n_{i,k}) + Q^n_{-i}(0)- R^n_{-i}(t^n_{i,k}-).
 \end{align*}
 Thus
  \begin{align*}
  k+Q^n_i(0)-R^n_i(t^n_{i,k}-) -Q^n_{-i}(0) +R^n_{-i}(t^n_{i,k}-) \le N^n_{-i}(t^n_{i,k}),
 \end{align*}
 and
   \begin{align*}
 t^n_{-i,k+Q^n_i(0)-R^n_i(t^n_{i,k}-) -Q^n_{-i}(0) +R^n_{-i}(t^n_{i,k}-)} \le t^n_{-i,N^n_{-i}(t^n_{i,k})} \le t^n_{i,k}.
 \end{align*}
This shows \eqref{eq:3}. Finally, \eqref{eq:4} follows immediately from \eqref{eq:2} and \eqref{eq:3}. \end{proof}

Let $i=\pm1$. We next introduce the following filtrations. First recall that we assume $Q_1(0)\ge 0$ and $Q_{-1}(0)=0$. We will let $w^n_{1,k} = 0$ when $k < -Q^n_1(0)+1$, and $w^n_{-1,k} = 0$ when $k\le 0$. 
Now define for $k=0, -1, -2, \ldots$, 
\begin{equation}\label{flitration-neg}
\begin{aligned}
\clf^n_{1,k} & = \sigma\left\{Q^n(0); (d^n_{1,l}, w^n_{1,l}), l = k+1, \ldots, 0; R^n_{1,k}(0); \right.\\ 
& \quad \quad \quad \left. t^n_{-1,j}, j =1, \ldots, -k+1-R^n_{1,k}(0)\right\}, \\
\clf^n_{-1,k} & = \sigma\left\{Q^n(0)\right\},
\end{aligned}
\end{equation}
and for $k\in\NN$,
\begin{align}
\clf^n_{i,k} & = \sigma\left\{Q^n(0); t^n_{i,l}, l = 1,2,\ldots, k; N^n_{-i}(s), 0\le s \le t_{i,k}^n; \right. \nonumber\\
\begin{split}
& \quad\quad\quad  (w^n_{i,l}, d^n_{i,l}), l =-Q^n_i(0)+1, \ldots, 0, 1,\ldots, k-1; \\
& \quad\quad\quad  (w^n_{-i,l}, d^n_{-i,l}), l =-Q^n_{-i}(0)+1, \ldots, 0,  1, \ldots, N^n_{-i}(t_{i,k}^n-);
\end{split}\label{filtration}\\
& \quad\quad\quad \left. t^n_{-i,l}, l = 1, 2, \ldots, k+Q_i^n(0) +R_i^n(t_{i,k}^n-) - Q^n_{-i}(0) + R^n_{-i}(t^n_{i,k}-) \right\}. \nonumber
\end{align}
\begin{lemma}\label{filt}
For $k\in\ZZ$, $w^n_{i,k}\in \clf^n_{i,k}$, and $d^n_{i,k}$ is independent of $\clf^n_{i,k}$.
\end{lemma}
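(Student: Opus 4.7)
The plan is to split the statement into two parts: the measurability $w_{i,k}^n \in \mathcal{F}_{i,k}^n$ and the independence $d_{i,k}^n \perp \mathcal{F}_{i,k}^n$. Each part naturally divides along the cases $k \le 0$ and $k \in \mathbb{N}$.

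For the measurability claim, the cases $k \le 0$ are essentially definitional: for $i=1$ we read off $w_{1,k}^n = t_{-1,-k+1-R^n_{1,k}(0)}^n$ from \eqref{waiting-negative}, and both $R_{1,k}^n(0)$ and the requisite $t_{-1,j}^n$'s are built into $\mathcal{F}_{1,k}^n$; for $i=-1$ we have set $w_{-1,k}^n=0$ by convention. For $k \in \mathbb{N}$ we invoke the formula \eqref{eq:4} from Lemma \ref{reformulate}, $w_{i,k}^n = [t_{-i,J}^n - t_{i,k}^n]^+$ with $J = k + Q_i^n(0) - R_i^n(t_{i,k}^n-) - Q_{-i}^n(0) + R_{-i}^n(t_{i,k}^n-)$, and verify piece by piece that everything is $\mathcal{F}_{i,k}^n$-measurable. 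The initial queue, the arrival time $t_{i,k}^n$, and the counts $N^n_{-i}(s)$ for $s\le t_{i,k}^n$ appear directly in the filtration. The abandonment count $R_i^n(t_{i,k}^n-) = R_{i,0}^n(0) + \sum_{l=1}^{k-1} 1_{\{d_{i,l}^n \le w_{i,l}^n\}}$ (handled analogously for negative $l$ via $R_{1,k}^n(0)$) is determined by the pairs $(d_{i,l}^n, w_{i,l}^n)$ listed in $\mathcal{F}_{i,k}^n$, and similarly $R_{-i}^n(t_{i,k}^n-)$ is determined by the pairs for the other class up to index $N_{-i}^n(t_{i,k}^n-)$ that the filtration also includes. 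Thus $J$ is $\mathcal{F}_{i,k}^n$-measurable, and since $\mathcal{F}_{i,k}^n$ contains $t_{-i,l}^n$ for all $l$ up to at least the maximum possible value of $J$ (using $R_i^n \ge 0$), the random-index quantity $t_{-i,J}^n$ is measurable as well.

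For the independence claim, define $\mathcal{G}^n = \sigma\bigl(Q^n(0), \{u_{1,\cdot}^n\}, \{u_{-1,\cdot}^n\}, \{d_{i,l}^n: l\ne k\}, \{d_{-i,l}^n: l\in\mathbb{Z}\}\bigr)$. By the independence assumption on the primitive inputs, $d_{i,k}^n$ is independent of $\mathcal{G}^n$, so it suffices to show $\mathcal{F}_{i,k}^n \subset \mathcal{G}^n$. The arrival times $t_{\pm 1,l}^n$ and the counting processes $N_{-i}^n(\cdot)$ are measurable functions of $\{u_{\pm 1,\cdot}^n\}$, and the patience times $d_{i,l}^n$ with $l\ne k$ and all $d_{-i,l}^n$ lie in $\mathcal{G}^n$ by construction. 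The nontrivial point is the offered-waiting times $w_{i,l}^n$ for $l < k$ and $w_{-i,l}^n$ for $l \le N_{-i}^n(t_{i,k}^n-)$: both correspond to customers arriving strictly before $t_{i,k}^n$. I will prove by induction along the chronological order of all arrivals (from both classes) that each such $w$ is $\mathcal{G}^n$-measurable, using the representation \eqref{eq:4} and the fact that only the history strictly prior to that customer's arrival feeds into $R_i^n(t_{i,l}^n-)$ and $R_{-i}^n(t_{i,l}^n-)$; since an arrival before $t_{i,k}^n$ sees only $d_{i,j}^n$ with $j \le k-1$ and $d_{-i,j}^n$ with $j \le N_{-i}^n(t_{i,k}^n-)$, the index $j=k$ of class $i$ never enters. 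For the $k \le 0$ subcase, the analogous chronological induction on $l \in \{k+1,\dots,0\}$ using \eqref{waiting-negative} gives the same conclusion, noting that $R_{1,l}^n(0)$ involves only $d_{1,j}^n$ with $j \ge l+1 > k$.

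The main obstacle is precisely this induction: one has to be disciplined about the recursive nature of the waiting-time recursions (each $w$ depends on earlier $R$'s, which depend on earlier $w$'s and $d$'s) and avoid any circular reference. Once the chronological induction is carried out cleanly — using that a customer arriving before $t_{i,k}^n$ can at most reference $d_{i,j}^n$ with $j \le k-1$ on the class-$i$ side — the two desired properties follow directly from the independence of the primitive data.
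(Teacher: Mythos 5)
Your proof is correct and takes essentially the same approach as the paper: a chronological induction over all arrivals (from both classes, including those waiting at time zero), showing that the offered waiting times of customers arriving before $t_{i,k}^n$ depend only on primitives that exclude $d_{i,k}^n$. Your repackaging via the auxiliary $\sigma$-field $\mathcal{G}^n$ (proving $\mathcal{F}_{i,k}^n\subset\mathcal{G}^n$ rather than arguing independence from the waiting times directly) is a cosmetically cleaner way to phrase the same induction the paper uses, and the fine-grained verification of measurability in your first paragraph simply spells out what the paper declares ``clear from \eqref{owt-1} and \eqref{owt-2}.''
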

\begin{proof}
From \eqref{owt-1} and \eqref{owt-2}, it is clear that $w^n_{i,k}\in \clf^n_{i,k}, k\in\ZZ.$ To see $d^n_{i,k}$ is independent of $\clf^n_{i,k}$, it suffices to show that $d^n_{i,k}$ is independent of the offered waiting times of all customers from both classes who arrive before $t_{i,k}^n$, 
because we assume $Q^n(0)$, $\{u^n_{1,k}: k\in \mathbb{N}\}, \{u^n_{-1,k}: k\in\mathbb{N}\},$ $\{d^n_{1,k}: k\in\mathbb{Z}\},$ and $\{d^n_{-1,k}: k\in\mathbb{Z}\}$ are independent. Fix $k\in\ZZ$. We will use mathematical induction in the following. For the first customer who arrives at the system (prior to time $0$ or after time $0$), assuming he/she is from Class $i$, the offered waiting time is $t_{-i,1}$, which is independent of $d_{i,k}$. Assume that $d_{i,k}$ is independent of the offered waiting times of the first $l$ customers (among all customers of both classes) who arrive at the system before $t_{i,k}^n$.
We next note that for any $l\in\ZZ$ and $i=\pm 1$, the offered waiting time $w_{i,l}$ of the $\left(l1_{\{l>0\}}+(-l+1)1_{\{l\le 0\}}\right)$-th customer of Class $i$ is determined by the offered waiting times of all customers from both classes who arrive before him/her, and other random variables that are independent of $d_{i,k}$. Thus $d_{i,k}$ is also independent of the offered waiting time of the $\left((l+1)1_{\{l>0\}}+(-l+2)1_{\{l\le 0\}}\right)$-th customer (among all customers of both classes) who arrive at the system before $t_{i,k}^n$. The lemma now follows. 
\end{proof}

We divide the rest of the section into four subsections. In Section \ref{sec:decomp}, we decompose $\hatw^n_i$ into several processes, and study the asymptotic behaviors of these processes (see Lemmas \ref{martingale-conv-1}, \ref{martingale-conv-2}, \ref{martingale-conv-3}). In Section \ref{sec:waiting}, using the results from the previous subsection, we prove the $C$-tightness of $(\hatw_1^n, \hatw^n_{-1})$ in Lemma \ref{waiting-tight}, and establish the weak convergence of $(\hatw_1^n, \hatw^n_{-1})$ in Theorem \ref{waiting-convergence}. The weak limit is uniquely characterized by a continuous functional defined in Proposition \ref{unique-map}. Section \ref{sec:reneging} is then devoted to derive the weak convergence of $\hatr^n_i$ and $\hatg^n_i$, and as shown in Lemmas \ref{abandon-5} and \ref{abandon-equal}, they have the same weak limit that can be characterized in terms of the offered waiting time processes. We also provide all the proofs of Theorems \ref{relation-thm}, \ref{diffusion-thm}, and \ref{stability} in this section. Finally, the proof of Corollay \ref{cor} can be found in Section \ref{coro:proof}.

\subsection{Decomposition of $\hatw^n_i$}\label{sec:decomp}
We decompose $\hatw^n_i$ into several stochastic processes, and these processes will be analyzed separately in Lemmas \ref{martingale-conv-1}, \ref{martingale-conv-2}, \ref{martingale-conv-3}. 
\begin{lemma}\label{decomp}
For $i=\pm 1$ and $t\ge 0$, 
\be\label{vw-fluid}\ba
\hatw^n_i(t) & = \left[\hat M^n_{i,1}(t) + \hat M^n_{i,2}(t) - \frac{n}{\lambda^n_{-i}}\hat M^n_{i,3}(\bar N^n_i(t)) +  \frac{n}{\lambda^n_{-i}}\hat M^n_{-i,3}(\bar N^n_{-i}(t)) \right.\\
& \quad\quad   - \frac{\sqrt{n}}{\lambda^n_{-i}} \int_0^t F^n_i(W^n_i(u-)) \D N^n_i(u) + \frac{\sqrt{n}}{\lambda^n_{-i}}   \int_0^t F^n_{-i}(W^n_{-i}(u-)) \D N^n_{-i}(u)  \\
& \quad\quad \left.  + \frac{n}{\lambda^n_{-i}}\left(\frac{1}{\sqrt{n}}+\hatq^n_i(0) -\hatq^n_{-i}(0) -\hat\xi^n_i + \hat\xi^n_{-i}\right) \right]^+,
\ea\ee
where
\begin{align}
\begin{split}
\hat M^n_{i,1}(t) &= \sqrt{n}\biggm[ t^n_{-i, N^n_i(t)+1+Q^n_i(0)-R^n_i(t)-Q^n_{-i}(0)+R^n_{-i}(t)} \\
& \quad - \frac{1}{\lambda_{-i}^n} (N^n_i(t)+1+Q^n_i(0)-R^n_i(t)-Q^n_{-i}(0)+R^n_{-i}(t))\biggm], 
\end{split} \label{mart-1}\\
\hat M^n_{i,2}(t) &= \frac{\sqrt{n}}{\lambda_{-i}^n} (N^n_i(t) - \lambda_{-i}^nt), \label{mart-2}\\
\hat M^n_{i,3}(t) &= \frac{1}{\sqrt{n}} \sum_{k=1}^{[nt]} \left(1_{\{d^n_{i,k} < w^n_{i,k}\}} - F^n_i\left({w_{i,k}^n}\right)\right), \label{mart-3}\\
\hat\xi^n_i & =   \frac{1}{\sqrt{n}} \sum_{k=-Q^n_i(0)+1}^{0} 1_{\{d^n_{i,k} < w^n_{i,k}\}}. \label{mart-4}
\end{align}
\end{lemma}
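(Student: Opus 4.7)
\medskip
\noindent\textbf{Proof proposal.} The identity is essentially a bookkeeping decomposition of the formula \eqref{vwt} for $W^n_i(t)$, so my plan is to start from
\[
\hatw^n_i(t) \;=\; \sqrt{n}\,W^n_i(t) \;=\; \bigl[\,\sqrt{n}\bigl(t^n_{-i,\,K^n_i(t)} - t\bigr)\,\bigr]^+,
\qquad
K^n_i(t) := N^n_i(t)+1+Q^n_i(0)-R^n_i(t)-Q^n_{-i}(0)+R^n_{-i}(t),
\]
using the fact that $\sqrt{n}[x]^+=[\sqrt{n}\,x]^+$. It then suffices to prove the same identity as \eqref{vw-fluid} without the outer $[\,\cdot\,]^+$; the positive part is applied at the end.

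The first step is to insert the renewal mean by writing
\[
\sqrt{n}\bigl(t^n_{-i,\,K^n_i(t)} - t\bigr)
\;=\; \sqrt{n}\Bigl(t^n_{-i,\,K^n_i(t)} - \tfrac{K^n_i(t)}{\lambda^n_{-i}}\Bigr)
\;+\; \tfrac{\sqrt{n}}{\lambda^n_{-i}}\bigl(K^n_i(t) - \lambda^n_{-i}\,t\bigr).
\]
The first summand is exactly $\hat M^n_{i,1}(t)$ by \eqref{mart-1}. Substituting the definition of $K^n_i(t)$ into the second summand and isolating $N^n_i(t)-\lambda^n_{-i}t$ produces $\hat M^n_{i,2}(t)$ as in \eqref{mart-2}, leaving the residual
\[
\tfrac{\sqrt{n}}{\lambda^n_{-i}}\bigl(1+Q^n_i(0)-Q^n_{-i}(0)-R^n_i(t)+R^n_{-i}(t)\bigr).
\]

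The second step is to decompose each $R^n_i(t)$. Split the sum defining $R^n_i(t)$ at $k=0$: the block $k=-Q^n_i(0)+1,\ldots,0$ contributes $\sqrt{n}\,\hat\xi^n_i$ by \eqref{mart-4}, while on the block $k=1,\ldots,N^n_i(t)$ I add and subtract the ``compensator'' $F^n_i(w^n_{i,k})$:
\[
\sum_{k=1}^{N^n_i(t)} \mathbf{1}_{\{d^n_{i,k}<w^n_{i,k}\}}
\;=\; \sqrt{n}\,\hat M^n_{i,3}\bigl(\bar N^n_i(t)\bigr) + \sum_{k=1}^{N^n_i(t)} F^n_i(w^n_{i,k}),
\]
using $[n\bar N^n_i(t)]=N^n_i(t)$ in the definition \eqref{mart-3}. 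Because $W^n_i(t^n_{i,k}-)=w^n_{i,k}$ (noted just after \eqref{vwt}), the deterministic tail rewrites as the Stieltjes integral
\[
\sum_{k=1}^{N^n_i(t)} F^n_i(w^n_{i,k}) \;=\; \int_0^t F^n_i\bigl(W^n_i(u-)\bigr)\,\D N^n_i(u).
\]
The same decomposition is carried out for $R^n_{-i}(t)$ with signs reversed.

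Finally, I would substitute these decompositions back into the residual, multiply the $\sqrt{n}$ through (so that $\tfrac{\sqrt{n}}{\lambda^n_{-i}}Q^n_i(0) = \tfrac{n}{\lambda^n_{-i}}\hatq^n_i(0)$ and $\tfrac{\sqrt{n}}{\lambda^n_{-i}}\cdot\sqrt{n}\,\hat\xi^n_i = \tfrac{n}{\lambda^n_{-i}}\hat\xi^n_i$, etc.), collect terms, and apply $[\,\cdot\,]^+$ to recover \eqref{vw-fluid}. Since this is purely algebraic, I do not expect any real obstacle; the only points that require a line of comment are the identity $W^n_i(t^n_{i,k}-)=w^n_{i,k}$ and the equivalence of the strict/non-strict inequality in the indicator (irrelevant since $F^n_i$ is continuous at every $w^n_{i,k}$ under Assumption \ref{patience}, or a.s.\ by continuity of the law of $d^n_{i,k}$). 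The martingale character of $\hat M^n_{i,3}$ relative to the filtration in \eqref{filtration}, which is the deeper content of Lemma \ref{filt}, is not needed here and enters only in the subsequent convergence lemmas.
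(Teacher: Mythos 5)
Your proposal is correct and, in structure, mirrors the paper's own proof: both start from \eqref{vwt}, insert the renewal mean $K^n_i(t)/\lambda^n_{-i}$ to produce $\hatm^n_{i,1}$, peel off $\hatm^n_{i,2}$, and then reduce the claim to the decomposition
\[
\hat R^n_i(t) = \hat\xi^n_i + \hatm^n_{i,3}(\barn^n_i(t)) + \tfrac{1}{\sqrt{n}}\int_0^t F^n_i(W^n_i(u-))\,\D N^n_i(u),
\]
which is also the only part the paper bothers to spell out (``it suffices to show\dots''). The one small divergence is worth noting: the paper's proof of that displayed identity invokes Lemma \ref{filt} to write $\E(1_{\{d^n_{i,k}<w^n_{i,k}\}}\mid\clf^n_{i,k})=F^n_i(w^n_{i,k})$ as an intermediate step, whereas you simply add and subtract $F^n_i(w^n_{i,k})$ directly. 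You are right that the filtration/independence argument is not logically needed for this lemma, which is a purely algebraic bookkeeping identity; the conditional-expectation viewpoint only becomes substantive when establishing the martingale property of $\hatm^n_{i,3}$ in the subsequent convergence lemmas, and the paper's phrasing slightly blurs that line. You also correctly flag the $\le$ versus $<$ mismatch between the definition of $R^n_i$ and that of $\hatm^n_{i,3},\hat\xi^n_i$; the resolution you sketch is in the right spirit, though note that Assumption \ref{patience} alone does not force $F^n_i$ to be continuous, so the cleanest fix is to observe that both sides of the claimed identity use whichever convention one adopts consistently, or to assume $F^n_i$ atomless on $(0,\infty)$.
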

\begin{proof}
It suffices to show that 
\be\label{total-abandon}\ba
\hat{R}^n_i(t)  & = \hat\xi^n_i + \hat M^n_{i,3}(\bar N^n_i(t))  + \frac{1}{\sqrt{n}} \int_0^t F^n_i(W^n_i(u-)) \D N^n_i(u), \ t\ge 0.
\ea\ee
From Lemma \ref{filt}, for $k\in\NN$, $w^n_{i,k}\in \clf^n_{i,k}$, and $d^n_{i,k}$ is independent of $\clf^n_{i,k}$. Thus for $t\ge 0$, 
\begin{align*}
\hat{R}^n_i(t) & = \frac{1}{\sqrt{n}} \sum_{k=-Q^n_i(0)+1}^{N_i^n(t)} 1_{\{d^n_{i,k}  < w^n_{i,k}\}} \\
&  = \frac{1}{\sqrt{n}} \sum_{k=-Q^n_i(0)+1}^{0} 1_{\{d^n_{i,k}  < w^n_{i,k}\}}  + \frac{1}{\sqrt{n}} \sum_{k=1}^{N_i^n(t)} 1_{\{d^n_{i,k}  < w^n_{i,k}\}} \\
& = \hat\xi^n_i  +  \frac{1}{\sqrt{n}} \sum_{k=1}^{N_i^n(t)} \left(1_{\{d^n_{i,k}  < w^n_{i,k}\}} - \E\left(1_{\{d^n_{i,k}  < w^n_{i,k}\}}|\clf^n_{i,k}\right)\right)  + \frac{1}{\sqrt{n}} \sum_{k=1}^{N^n_i(t)} \E\left(1_{\{d^n_{i,k}  < w^n_{i,k}\}}|\clf^n_{i,k}\right)\\
& = \hat\xi^n_i + \frac{1}{\sqrt{n}} \sum_{k=1}^{N_i^n(t)} \left(1_{\{d^n_{i,k}  < w^n_{i,k}\}} - F^n_i \left({w_{i,k}^n}\right)\right) + \frac{1}{\sqrt{n}} \sum_{k=1}^{N^n_i(t)} F^n_i \left({w_{i,k}^n}\right) \\
& = \hat\xi^n_i +\frac{1}{\sqrt{n}} \sum_{k=1}^{N_i^n(t)} \left(1_{\{d^n_{i,k}  < w^n_{i,k}\}} - F^n_i \left({w_{i,k}^n}\right)\right)  + \frac{1}{\sqrt{n}} \int_0^t F^n_i(W^n_i(u-)) \D N^n_i(u).
\end{align*}
The lemma follows. 
\end{proof}

We first establish the fluid limit of the state process in Lemma \ref{fluid}, and then study the processes in \eqref{mart-1} -- \eqref{mart-4} as $n\to\infty$ in Lemmas \ref{martingale-conv-1}, \ref{martingale-conv-2}, \ref{martingale-conv-3}.

\begin{lemma}\label{fluid} Assume that $\hatq^n$ is stochastically bounded. Then under Assumptions \ref{arrival} and \ref{patience}, we have 
\begin{equation}
(\barn^n_1, \barn^n_{-1}, \barr^n_1, \barr^n_{-1}, W^n_1, W^n_{-1}, \barq^n) \Go (\lambda_1, \lambda_{-1}, 0, 0, 0, 0, 0)\iota,
\end{equation}
where $\iota:[0,\infty)\to [0, \infty)$ is the identity map. 
\end{lemma}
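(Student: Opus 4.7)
The plan is to prove each of the seven components converges separately in probability, uniformly on compact time intervals; since every limit is a deterministic continuous function, these separate convergences assemble into the asserted joint weak convergence in the Skorohod space. Throughout fix $T>0$.

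\emph{Step 1 (arrival processes and fluid queue length).} Assumption \ref{arrival} says $u^n_{i,k} = \tilde u^n_{i,k}/n$ with $\E(\tilde u^n_{i,k}) \to 1/\lambda_i$ and $\{(\tilde u^n_{i,1})^2\}$ uniformly integrable. A standard functional law of large numbers for triangular arrays of renewal processes then gives $\barn^n_i \to \lambda_i \iota$ uniformly on $[0,T]$ in probability, for $i=\pm 1$. The last component is immediate from the hypothesis: $\barq^n = \hatq^n/\sqrt n$ with $\hatq^n$ stochastically bounded, so $\|\barq^n\|_T \to 0$ in probability.

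\emph{Step 2 (virtual waiting times).} Writing the number of matched pairs by time $t$ through each class in turn yields
\begin{equation*}
N^n_i(t)+1+Q^n_i(0)-R^n_i(t)-Q^n_{-i}(0)+R^n_{-i}(t) = N^n_{-i}(t)+1+Q^n_i(t)-Q^n_{-i}(t).
\end{equation*}
Combined with \eqref{vwt}, whenever $W^n_i(t)>0$ the renewal index inside the definition of $W^n_i(t)$ exceeds $N^n_{-i}(t)$, giving the pathwise bound
\begin{equation*}
0 \le W^n_i(t) \le t^n_{-i,\,N^n_{-i}(t)+1+|Q^n(t)|} - t \le \sum_{k=N^n_{-i}(t)+1}^{N^n_{-i}(t)+1+|Q^n(t)|} u^n_{-i,k}.
\end{equation*}
The number of summands is at most $\sqrt n\,\|\hatq^n\|_T + 1$, which is $O_\PP(\sqrt n)$ by hypothesis. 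Assumption \ref{arrival} together with a standard maximal inequality guarantees that for any deterministic $m_n = o(n)$,
\begin{equation*}
\max_{0 \le j \le \lceil\lambda_{-i} nT\rceil}\sum_{k=j+1}^{j+m_n} u^n_{-i,k} = O_\PP(m_n/n).
\end{equation*}
A conditioning argument on $\|\hatq^n\|_T$ applied with $m_n$ of order $\sqrt n$ then gives $\|W^n_i\|_T = O_\PP(1/\sqrt n)$, so $W^n_i \to 0$ u.o.c.\ in probability.

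\emph{Step 3 (reneging).} The identity \eqref{total-abandon} from the proof of Lemma \ref{decomp}, divided by $\sqrt n$, reads
\begin{equation*}
\barr^n_i(t) = \frac{\hat\xi^n_i}{\sqrt n} + \frac{\hat M^n_{i,3}(\barn^n_i(t))}{\sqrt n} + \frac{1}{n}\int_0^t F^n_i(W^n_i(u-))\,\D N^n_i(u).
\end{equation*}
The boundary term is dominated by $Q^n_i(0)/n = \hatq^n_i(0)/\sqrt n \to 0$. For the middle term, Lemma \ref{filt} identifies $\hat M^n_{i,3}$ as a discrete-time $\{\clf^n_{i,k}\}$-martingale with jumps of size at most $1/\sqrt n$, so by Doob's inequality $\|\hat M^n_{i,3}(\barn^n_i(\cdot))\|_T/\sqrt n \to 0$ in $L^2$. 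The final term is bounded by $F^n_i(\|W^n_i\|_T)\cdot\barn^n_i(T)$; since Step~2 gives $\sqrt n\,\|W^n_i\|_T$ stochastically bounded, Assumption \ref{patience} (in the form $\sqrt n F^n_i(s/\sqrt n)\to H_i(s)$ locally uniformly, with $H_i(0)=0$) forces $F^n_i(\|W^n_i\|_T) = O_\PP(1/\sqrt n)\to 0$, while $\barn^n_i(T)\to \lambda_i T$ by Step~1.

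The principal difficulty will be Step 2: uniformly controlling the partial-sum increments of the inter-arrival sequence over a random window whose length is itself of order $\sqrt n$. This rests on Assumption \ref{arrival}(iii) through a maximal inequality for uniformly integrable sums; once that bound is in hand, the remaining pieces reduce to routine functional SLLN and martingale estimates.
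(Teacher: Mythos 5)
Your proof is correct and follows the same overall skeleton as the paper's: FLLN for the renewal arrivals and the hypothesis on $\hatq^n$ dispatch $\barn^n_i$ and $\barq^n$; the virtual waiting time is controlled by bounding it above by a random partial sum of inter-arrival times involving $O_\PP(\sqrt{n})$ summands; and the identity \eqref{total-abandon} reduces $\barr^n_i$ to an initial term, a martingale handled by Doob's inequality, and an integral term killed by Assumption \ref{patience} together with the stochastic boundedness of $\sqrt{n}\,\|W^n_i\|_T$. The one place you genuinely diverge is Step~2: the paper keeps the cumulative form $\sum_{k=1}^{|Q^n(t)|+1+N^n_{-i}(t)} u^n_{-i,k}-t$, centers by $1/\lambda^n_{-i}$, and invokes the functional CLT of \cite{iw71} together with a random time change; you instead pass to the sliding-window form $\sum_{k=N^n_{-i}(t)+1}^{N^n_{-i}(t)+1+|Q^n(t)|}u^n_{-i,k}$ and control the maximum over windows via a Kolmogorov-type maximal inequality after conditioning on $\|\hatq^n\|_T\le K$. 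Both routes give $\|W^n_i\|_T=O_\PP(n^{-1/2})$; the paper's reuses FCLT machinery that is needed later anyway, yours is slightly more elementary (though you should note the claimed $O_\PP(m_n/n)$ only holds once $m_n\gtrsim\sqrt{n}$, which is the case here since the window fluctuation is $O_\PP(\sqrt{n}/n)$). One small slip in Step~2: the displayed ``matched pairs'' identity is correct with $G^n_\pm$, not $R^n_\pm$ — the queue-length balance \eqref{queue-length} involves actual abandonments, not eventual ones. Since $R^n_i\ge G^n_i$ always and $R^n_{-i}(t)=G^n_{-i}(t)$ on the event $\{Q^n_{-i}(t)=0\}$ where $W^n_i(t)$ can be positive, the $R$-indexed expression is bounded above by $N^n_{-i}(t)+1+|Q^n(t)|$ there, which is all you use; so the bound on the renewal index and hence on $W^n_i$ survives, but the equation as written should be an inequality.
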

\begin{proof}
Using functional law of large numbers for renewal processes (see Theorem 1 in \cite{iw71}), we have 
\begin{equation}\label{fluid-conv-1}
(\barn^n_1, \barn_{-1}^n) \Go (\lambda_1, \lambda_{-1})\iota.
\end{equation}
Recall that for $t\ge 0$,
\bes
W^n_i(t) = \left(t^n_{-i, N^n_i(t)+1+Q^n_i(0)-R^n_i(t)-Q^n_{-i}(0)+R^n_{-i}(t)} - t\right)1_{\{Q_{-i}^n(t)=0\}}.
\ees
When $Q^n_{-i}(t)=0$, we have $R^n_{-i}(t) =G^n_{-i}(t).$ Thus
\begin{align*}
W^n_i(t) & \le |t^n_{-i, N^n_i(t)+1+Q^n_i(0)-G^n_i(t)-Q^n_{-i}(0)+G^n_{-i}(t)} - t|.
\end{align*}
We first consider $W^n_1$, and note that from \eqref{queue-length}, for $t\ge 0$,
\[
|N^n_1(t)+1+Q^n_1(0)-G^n_1(t)-Q^n_{-1}(0)+G^n_{-1}(t)| = |Q^n(t) +1+ N^n_{-1}(t)| \le |Q^n(t)| + 1 + N^n_{-1}(t).
\] 
Consequently, we have for $T\ge 0,$
\begin{align}
& \sup_{0\le t \le T} W^n_1(t)  \le \sup_{0\le t \le T} \left|\sum_{k=1}^{|Q^n(t)| + 1 + N^n_{-1}(t)} u^n_{-1, k} - t \right| \nonumber\\
& =  \sup_{0\le t \le T} \left|\sum_{k=1}^{|Q^n(t)| + 1 + N^n_{-1}(t)} \left({u}^n_{-1, k} - \frac{1}{\lambda^n_{-1}} \right) + \frac{1}{\lambda^n_{-1}}(|Q^n(t)| + 1) + \frac{1}{\lambda^n_{-1}} (N^n_{-1}(t) - \lambda^n_{-1}t)  \right| \nonumber\\
& \le \sup_{0\le t \le T}  \left|\sum_{k=1}^{|Q^n(t)| + 1 + N^n_{-1}(t)} \left({u}^n_{-1, k} - \frac{1}{\lambda^n_{-1}} \right) \right| + \frac{n}{\lambda^n_{-1}}\|\barq^n\|_t + \frac{1}{\lambda^n_{-1}} + \frac{n}{\lambda^n_{-1}} \frac{\|\hatn^n_{-1}\|_t}{\sqrt{n}}.\label{fluid-waiting}
\end{align}
Using functional central limit theorems for triangular arrays and renewal processes (see again Theorem 1 in \cite{iw71}), under Assumption \ref{arrival}, 
\begin{align}\label{iw-thm}
\left({\sqrt{n}}\sum_{k=1}^{\lfloor n\cdot \rfloor} \left({u}^n_{-1, k} - \frac{1}{\lambda^n_{-1}} \right), \ \ \hatn^n_{-1}\right)\Go (\sigma_{-1}B, \ \ -\sigma_{-1}\lambda_{-1}^{3/2}B),
\end{align}
where $B$ is a standard Brownian motion. Noting that $\barq^n \Go 0$, and applying the random time change theorem to the first term in \eqref{fluid-waiting}, we have $W_1\Go 0.$ Similarly, we can show the same convergence result for $W^n_{-1}$. Thus 
\begin{equation}\label{fluid-conv-3}
W^n_i \Go 0.
\end{equation}
Next from \eqref{total-abandon}, for $t\ge 0,$
\begin{align*}
\bar{R}^n_i(t)  & = \frac{1}{\sqrt{n}}\hat\xi^n_i + \frac{1}{\sqrt{n}}\hat M^n_{i,3}(\bar N^n_i(t))  + \frac{1}{n} \int_0^t F^n_i(W^n_i(u-)) \D N^n_i(u), \ t\ge 0.
\end{align*}
It is clear that 
\begin{equation}\label{fluid-reneging-1}
\frac{1}{\sqrt{n}}\hat\xi^n_i  \le |\barq^n(0)|\Go 0.
\end{equation} 
We next consider $\frac{1}{\sqrt{n}}\hatm^n_{i,3}$ and note that $\{\frac{1}{\sqrt{n}}\hatm^n_{i,3}(t)\}_{t\ge 0}$ is a $\{\clf^n_{i,t}\vee  \sigma(d^n_{i,[nt]})\}_{t\ge 0}$ martingale, where  $\{\clf_{i,k}^n\}_{k\in\NN}$ is defined in \eqref{filtration}. From Doob's inequality for martingales, we have for $T\ge 0,$ 
\bes\label{doob}
\E\left(\sup_{0\le t\le T}\left(\frac{1}{\sqrt{n}}\hatm^n_{i,3}(t)\right)^2\right) \le 4 \E\left(\left(\frac{1}{\sqrt{n}}\hatm^n_{i,3}(T)\right)^2\right) = 4 \E\left([\frac{1}{\sqrt{n}}\hatm^n_{i,3}](T)\right) \le \frac{8nT}{n^2} \to 0.
\ees
From the random time change theorem, we have 
\begin{equation}\label{fluid-reneging-2}
\frac{1}{\sqrt{n}}\hatm^n_{i,3}(\barn^n_i(\cdot)) \Go 0.
\end{equation}
Next using Assumption \ref{patience}, we have that for $\delta>0,$
\begin{align}
&\limsup_{n\to\infty} \PP\left(\frac{1}{n} \int_0^t F^n_i(W^n_i(u-)) \D N^n_i(u) > \delta \right)\label{fluid-reneging-4}\\
& =\lim_{K\to\infty}\limsup_{n\to\infty} \PP\left(\frac{1}{n} \int_0^t F^n_i(W^n_i(u-)) \D N^n_i(u) > \delta, \|\hatw^n_i\|_{t} \le K \right) + \lim_{K\to\infty}\limsup_{n\to\infty}\PP( \|\hatw^n_i\|_{t} > K). \nonumber
\end{align}
From \eqref{fluid-waiting}, \eqref{iw-thm}, and the assumption that $\hatq^n$ is stochastically bounded, we see that $\hatw^n_i$ is also stochastically bounded, and so 
\begin{align*}
\lim_{K\to\infty}\limsup_{n\to\infty}\PP( \|\hatw^n_i\|_{t} > K) =0. 
\end{align*}
Therefore, \eqref{fluid-reneging-4} can be bounded by 
\[
\lim_{K\to\infty}\limsup_{n\to\infty}  \PP\left( \barn^n_i(t) > \frac{\delta}{F^n_i(K/\sqrt{n})} \right)   =0.
\]
This shows that 
\begin{equation}\label{fluid-reneging-3}
\frac{1}{n} \int_0^\cdot F^n_i(W^n_i(u-)) \D N^n_i(u) \Go 0.
\end{equation}
Combining \eqref{fluid-reneging-1}, \eqref{fluid-reneging-2}, and \eqref{fluid-reneging-3}, we have 
\begin{equation}\label{fluid-conv-4}
\barr^n_i \Go 0.
 \end{equation}
 The lemma follows from \eqref{fluid-conv-1}, \eqref{fluid-conv-3}, and \eqref{fluid-conv-4}.
\end{proof}

\begin{lemma}\label{martingale-conv-1} Assume that $\hatq^n$ is stochastically bounded. Let $\hatm^n_{i,1}$ and $\hatm^n_{i,2}$ be as in \eqref{mart-1} and \eqref{mart-2}. Then under Assumptions \ref{arrival}, \ref{htc} and \ref{patience},
\[
(\hatm^n_{1,1}, \hatm^n_{1,2}, \hatm^n_{-1,1}, \hatm^n_{-1,2}) \Go (0, \frac{c}{\lambda}, 0, -\frac{c}{\lambda})\iota + (Y_{-1}, - Y_1, Y_1, - Y_{-1}), 
\]
where $\iota: [0,\infty)\to [0,\infty)$ is the identity map, and $Y_1(t) = \sigma_1 B_1(\lambda t)$ and $Y_{-1}(t) = \sigma_{-1} B_{-1}(\lambda t)$ with $B_1$ and $B_{-1}$ being two independent standard Brownian motions.
\end{lemma}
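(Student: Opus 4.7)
The plan is to reduce both $\hat M^n_{i,1}$ and $\hat M^n_{i,2}$ to standard FCLT objects (centered partial sums and centered renewal counting processes), and then pass to the limit via the joint FCLT for renewal processes from \eqref{iw-thm}, the fluid limit of Lemma \ref{fluid}, and the random time change theorem. Set
\[
K^n_i(t) := N^n_i(t) + 1 + Q^n_i(0) - R^n_i(t) - Q^n_{-i}(0) + R^n_{-i}(t), \qquad U^n_j(s) := \sqrt{n}\sum_{l=1}^{\lfloor ns\rfloor}\!\Bigl(u^n_{j,l} - \tfrac{1}{\lambda_j^n}\Bigr).
\]
Using $t^n_{-i,k} = \sum_{l=1}^{k}u^n_{-i,l}$ for $k\ge 1$, one obtains the composition
\[
\hat M^n_{i,1}(t) \;=\; U^n_{-i}\!\bigl(K^n_i(t)/n\bigr),
\]
and by adding and subtracting $\lambda_i^n t$ inside the centering,
\[
\hat M^n_{i,2}(t) \;=\; \frac{n}{\lambda_{-i}^n}\,\hat N^n_i(t) \;+\; \frac{\sqrt{n}(\lambda_i^n - \lambda_{-i}^n)}{\lambda_{-i}^n}\,t.
\]

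For each class $j\in\{\pm 1\}$, \eqref{iw-thm} supplies the joint weak convergence of $(U^n_j,\hat N^n_j)$ to a pair of Brownian-motion-driven limits; independence of the two arrival streams $\{u^n_{1,k}\}$ and $\{u^n_{-1,k}\}$ makes the $j=1$ and $j=-1$ limits jointly independent, furnishing independent standard Brownian motions $B_1,B_{-1}$. The assumption $\hat Q^n(0)\Rightarrow q$ forces $\bar Q^n(0)\Rightarrow 0$, so Lemma \ref{fluid} gives $K^n_i/n \Rightarrow \lambda\iota$, a deterministic continuous limit; the random time change theorem then yields $\hat M^n_{i,1} \Rightarrow \sigma_{-i}B_{-i}(\lambda\cdot) = Y_{-i}$. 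For $\hat M^n_{i,2}$, Assumption \ref{htc} gives $\sqrt{n}(\lambda_i^n - \lambda_{-i}^n)/\lambda_{-i}^n \to c_i/\lambda$ with $c_1 = c$, $c_{-1} = -c$, while the same joint FCLT, combined with the sample-path near-identity $U^n_j(\bar N^n_j(t)) + (n/\lambda_j^n)\hat N^n_j(t) = o_p(1)$ obtained from $t^n_{j,N^n_j(t)} = t + O(1/n)$, pins down $(n/\lambda_{-i}^n)\hat N^n_i \Rightarrow -Y_i$. Assembling the two pieces gives $\hat M^n_{i,2} \Rightarrow c_i\iota/\lambda - Y_i$. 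Joint convergence of the four-tuple follows by combining the within-class joint convergence with between-class independence and the continuous mapping theorem applied to the compositions.

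The main subtlety is ensuring the joint limit is expressed consistently so that the \emph{same} Brownian motion $B_i$ governs both $Y_i$ (arising as the time-changed partial-sum limit of $\hat M^n_{-i,1}$) and $-Y_i$ (arising as the scaled counting limit in $\hat M^n_{i,2}$); the identification is enforced by the time-change identity above, which in the limit forces $\sigma_j B_j(\lambda t) = -\hat N_j(t)/\lambda$ and hence the specific form $Y_j(t) = \sigma_j B_j(\lambda t)$ written in the lemma. A minor technical point is positivity of $K^n_i(t)$ needed for the partial-sum identity used in rewriting $\hat M^n_{i,1}$: since $K^n_i(t)/n \to \lambda t > 0$ uniformly on compacts bounded away from zero, this is automatic asymptotically, and any boundary behavior at $t=0$ is absorbed by continuity of the limiting process at the origin.
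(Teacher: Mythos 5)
Your proposal is correct and takes essentially the same route as the paper: decompose $\hat M^n_{i,1}$ as a random time change of a centered partial-sum process (your $U^n_{-i}(K^n_i(\cdot)/n)$ is the paper's $\tilde M^n_{-i}$ composed with the fluid-scaled index), rewrite $\hat M^n_{i,2}$ as $\frac{n}{\lambda^n_{-i}}\hat N^n_i$ plus a deterministic drift handled by Assumption \ref{htc}, and pass to the limit using the joint renewal FCLT, Lemma \ref{fluid}, and the random time-change theorem. The only difference is stylistic: you re-derive the coupling between the partial-sum limit and the counting-process limit via the near-identity $U^n_j(\bar N^n_j(t)) + (n/\lambda^n_j)\hat N^n_j(t) = o_p(1)$, whereas the paper simply cites this jointly-convergent statement as \eqref{fclt} from \cite{iw71}.
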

\begin{proof}
Define 
\be\label{iid-sum}
\tilde M^n_{i}(t) = \sqrt{n} \sum_{k=1}^{\lfloor \lambda^n_it \rfloor} \left(u^n_{i,k} - \frac{1}{\lambda^n_{i}}\right), \ t\ge 0.
\ee
Using Theorem 1 in \cite{iw71}, under Assumptions \ref{arrival} and \ref{htc}, 
\be\label{fclt}
(\tilde M^n_1, \tilde M^n_{-1}, \frac{1}{\lambda} \hatn^n_1, \frac{1}{\lambda} \hatn^n_{-1})\Go (Y_1, Y_{-1}, -Y_1, -Y_{-1}),
\ee 
where  $Y_1(t) = \sigma_1 B_1(\lambda t)$ and $Y_{-1}(t) = \sigma_{-1} B_{-1}(\lambda t)$ with $B_1$ and $B_{-1}$ being two independent standard Brownian motions. We next note that for $i=\pm 1$ and $t\ge 0,$
\begin{align}\label{fclt-au-1}
\hatm^n_{i,1}(t)  = \tilde M^n_{-i}\left(\frac{n}{\lambda_{-i}^n}(\barn^n_i(t)+\frac{1}{n}+\barq^n_i(0)-\barr^n_i(t)-\barq^n_{-i}(0)+\barr^n_{-i}(t))\right),
\end{align}
and 
\be\label{fclt-au-2}
\hatm^n_{i,2}(t) = \frac{n}{\lambda^n_{-i}}\hatn^n_i(t) + \frac{n}{\lambda^n_{-i}}\frac{(\lambda^n_i -\lambda^n_{-i})t}{\sqrt{n}}.
\ee
The lemma follows by applying the random time change theorem, Lemma \ref{fluid}, and Assumptions \ref{arrival} and \ref{htc} to $(\hatm^n_{1,1}, \hatm^n_{1,2}, \hatm^n_{-1,1}, \hatm^n_{-1,2})$. 
\end{proof}

The following lemma on martingale convergence (see  \cite{pang2007} or \cite{daihe2010}) will be used in the proof of Lemma \ref{martingale-conv-2}.

\begin{lemma}\label{martingale-convergence}
Suppose $\{M^n(t); t\ge 0\}$ is a local martingale w.r.t some filtrations, and for $t\ge 0$,
\bes
\E\left( \sup_{0\le s \le t} |M^n(s)-M^n(s-)|\right) \to 0, \ \mbox{and} \ [M^n]_t \Go 0, \ \mbox{as $n\to\infty.$}
\ees
Then $M^n \Go 0$ as $n\to \infty.$
\end{lemma}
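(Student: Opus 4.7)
The plan is to reduce weak convergence in $\mathcal{D}([0,\infty);\RR)$ to locally uniform convergence in probability (valid since the candidate limit is the continuous zero path) and then bound $\PP(\sup_{s\le t}|M^n(s)|>\epsilon)$ by invoking the Burkholder--Davis--Gundy inequality at $p=1$ (Davis's inequality), which for a local martingale $M$ reads
\[
\E\left[\sup_{s\le t}|M(s)|\right] \le C\,\E\left[[M]_t^{1/2}\right].
\]
This form is available for \emph{any} local martingale, without a priori integrability, and so is well-suited to the hypotheses.

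First I would fix $t>0$ and $\epsilon>0$ and, for a parameter $K>0$, localize $M^n$ at the stopping time $\sigma^n_K = \inf\{s\ge 0 : [M^n]_s \ge K\}$. On $\{\sigma^n_K > t\} = \{[M^n]_t<K\}$ the stopped process $M^{n,\sigma^n_K}$ agrees with $M^n$ on $[0,t]$, while in general
\[
[M^{n,\sigma^n_K}]_t \le K + (\Delta M^n(\sigma^n_K))^2 \le K + (J^n_t)^2, \qquad J^n_t := \sup_{s\le t}|\Delta M^n(s)|.
\]
Subadditivity of the square root gives $\E\bigl[[M^{n,\sigma^n_K}]_t^{1/2}\bigr] \le \sqrt{K} + \E[J^n_t]$, so Davis's inequality followed by Markov's inequality yields
\[
\PP\left(\sup_{s\le t}|M^{n,\sigma^n_K}(s)|>\epsilon\right) \le \frac{C}{\epsilon}\left(\sqrt{K}+\E[J^n_t]\right).
\]
Decomposing over $\{\sigma^n_K\le t\}$ and its complement gives the key estimate
\[
\PP\left(\sup_{s\le t}|M^n(s)|>\epsilon\right) \le \PP\bigl([M^n]_t \ge K\bigr) + \frac{C}{\epsilon}\left(\sqrt{K}+\E[J^n_t]\right).
\]

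Finally I would let $n\to\infty$. For each fixed $K$, the last term tends to $C\sqrt{K}/\epsilon$ by the hypothesis $\E[J^n_t]\to 0$, while the first term is handled by letting $K$ depend on $n$: a standard diagonal argument produces a sequence $K^n\to 0$ slowly enough that $\PP([M^n]_t\ge K^n)\to 0$, which is possible because $[M^n]_t \Go 0$. Together these give $\sup_{s\le t}|M^n(s)|\to 0$ in probability for every $t$, and hence $M^n\Go 0$ in $\mathcal{D}([0,\infty);\RR)$.

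The one subtlety worth flagging is the control of the jump at the localizing time $\sigma^n_K$, which is necessarily a jump time of $M^n$ and could in principle be arbitrarily large. The crude pointwise bound $(\Delta M^n(\sigma^n_K))^2 \le (J^n_t)^2$ is what makes the argument go through, and it is precisely the $L^1$ (rather than merely in-probability) hypothesis on the maximal jump that allows one to pass from $[M^{n,\sigma^n_K}]_t \le K+(J^n_t)^2$ to $\E\bigl[[M^{n,\sigma^n_K}]_t^{1/2}\bigr] \le \sqrt{K}+\E[J^n_t]$ without requiring any higher moment on the jumps.
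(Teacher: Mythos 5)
The paper does not prove this lemma itself --- it simply cites \cite{pang2007} and \cite{daihe2010} --- so there is no internal proof to compare against. Your argument is correct and is in substance the standard one from those sources: localize at the first time $[M^n]$ hits $K$, bound the stopped quadratic variation by $K + (J^n_t)^2$ using the jump at the localizing time, apply the $L^1$-form of the Burkholder--Davis--Gundy inequality (which indeed holds for arbitrary local martingales vanishing at the origin), and then send $n\to\infty$ and $K\downarrow 0$. Two minor points are worth tightening. First, the chain $[M^{n,\sigma^n_K}]_t \le K + (\Delta M^n(\sigma^n_K))^2 \le K + (J^n_t)^2$ uses $\sigma^n_K\le t$ in the second step; but on $\{\sigma^n_K > t\}$ one has directly $[M^{n,\sigma^n_K}]_t = [M^n]_t < K$, so the final bound $K+(J^n_t)^2$ holds in all cases and the reader should see that split. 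Second, the diagonal sequence $K^n$ is unnecessary: from the estimate $\PP(\sup_{s\le t}|M^n(s)|>\epsilon) \le \PP([M^n]_t\ge K) + (C/\epsilon)(\sqrt{K}+\E[J^n_t])$ one simply takes $\limsup_{n\to\infty}$ for each fixed $K>0$ (the first term vanishes because $[M^n]_t\Go 0$, the last because $\E[J^n_t]\to 0$), obtaining $\limsup_n \PP(\sup_{s\le t}|M^n(s)|>\epsilon) \le C\sqrt{K}/\epsilon$, and then lets $K\downarrow 0$. You also implicitly assume $M^n(0)=0$, which is needed for BDG; this is harmless here since it holds in every application of the lemma in the paper.
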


\begin{lemma}\label{martingale-conv-2} Assume that $\hatq^n$ is stochastically bounded. Let $\hatm^n_{i,3}$ be as in \eqref{mart-3}. Then under Assumption \ref{arrival}, \ref{htc}, and \ref{patience}, $\hatm^n_{i,3}\Go 0. $
\end{lemma}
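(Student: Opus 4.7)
The plan is to apply Lemma \ref{martingale-convergence} to $\hatm^n_{i,3}$, treated as a discrete-time martingale indexed by the customer count of Class $i$ (extended to continuous time by the convention $\hatm^n_{i,3}(t)=\hatm^n_{i,3}([nt]/n)$), with respect to the filtration $\{\clf^n_{i,[nt]}\}_{t\ge 0}$ from \eqref{filtration}. By Lemma \ref{filt}, for each $k\in\NN$, $w^n_{i,k}$ is $\clf^n_{i,k}$-measurable and $d^n_{i,k}$ is independent of $\clf^n_{i,k}$, so
\[
\E\!\left[\,1_{\{d^n_{i,k}<w^n_{i,k}\}}-F^n_i(w^n_{i,k})\,\big|\,\clf^n_{i,k}\right]=0,
\]
confirming the martingale property. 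Since each summand is bounded by $1$, the jumps of $\hatm^n_{i,3}$ are bounded by $1/\sqrt n$, so $\E(\sup_{s\le t}|\hatm^n_{i,3}(s)-\hatm^n_{i,3}(s-)|)\le 1/\sqrt n\to 0$. This verifies the first condition of Lemma \ref{martingale-convergence}.

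For the second condition, the predictable quadratic variation is
\[
\langle\hatm^n_{i,3}\rangle_t=\frac{1}{n}\sum_{k=1}^{[nt]}F^n_i(w^n_{i,k})\bigl(1-F^n_i(w^n_{i,k})\bigr),
\]
and since jumps are $O(1/\sqrt n)$, it suffices to prove $\langle\hatm^n_{i,3}\rangle_t\Go 0$, which then yields $[\hatm^n_{i,3}]_t\Go 0$ because $[\hatm^n_{i,3}]_t-\langle\hatm^n_{i,3}\rangle_t$ is a martingale whose jumps are $O(1/n)$ (a straightforward Doob bound on the resulting martingale gives the reduction). To bound $\langle\hatm^n_{i,3}\rangle_t$, I would use the stochastic boundedness of $\hatw^n_i$ established in the proof of Lemma \ref{fluid} (which follows from the stochastic boundedness of $\hatq^n$ combined with \eqref{fluid-waiting} and \eqref{iw-thm}). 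For a fixed $t\ge 0$, the law of large numbers for the renewal process gives $t^n_{i,[nt]}\to t/\lambda_i$ in probability, so fix $T>t/\lambda_i$ and split on the events $\{\|\hatw^n_i\|_T\le K\}\cap\{t^n_{i,[nt]}\le T\}$ and its complement.

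On the good event, every customer $k\le[nt]$ has arrived by time $T$ and thus satisfies $w^n_{i,k}=W^n_i(t^n_{i,k}-)\le K/\sqrt n$, so monotonicity of $F^n_i$ gives
\[
\langle\hatm^n_{i,3}\rangle_t \le \frac{[nt]}{n}\,F^n_i\!\left(\frac{K}{\sqrt n}\right),
\]
which tends to zero by Assumption \ref{patience} (which implies $F^n_i(K/\sqrt n)\to 0$). The probability of the bad event can be made arbitrarily small by first choosing $K$ large (stochastic boundedness of $\hatw^n_i$) and then $n$ large (LLN for $t^n_{i,[nt]}$). This yields $\langle\hatm^n_{i,3}\rangle_t\Go 0$, and hence $\hatm^n_{i,3}\Go 0$ by Lemma \ref{martingale-convergence}.

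The main technical obstacle is bookkeeping the two time scales: $\hatm^n_{i,3}$ is indexed by the customer count while $\hatw^n_i$ and Assumption \ref{patience} live on the physical time axis. Handling this via a truncation over $\{\|\hatw^n_i\|_T\le K\}$ together with the renewal LLN on $t^n_{i,[nt]}$ is the step that requires the most care; everything else is a standard application of martingale technology once the appropriate uniform bound on $F^n_i(w^n_{i,k})$ is in place.
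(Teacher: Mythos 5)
Your proof is correct, and it takes a genuinely different route to bounding the quadratic variation than the paper does. The paper bounds $\E\bigl([\hatm^n_{i,3}]_t\bigr)$ directly: it writes $\E\bigl(1_{\{w^n_{i,k}\ge d^n_{i,k}\}}-F^n_i(w^n_{i,k})\bigr)^2 \le \E\bigl(1_{\{w^n_{i,k}\ge d^n_{i,k}\}}\bigr)$, recognizes the resulting sum as (essentially) $\E\bigl(\barr^n_i(2t/\lambda)\bigr)$ after restricting to the event $\{N^n_i(2t/\lambda)\ge nt\}$, and then invokes the fluid limit $\barr^n_i\Go 0$ from Lemma \ref{fluid} together with uniform integrability of $\barr^n_i$ (inherited from $\barn^n_i$) to conclude $\E\bigl([\hatm^n_{i,3}]_t\bigr)\to 0$. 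You instead work with the predictable quadratic variation $\langle\hatm^n_{i,3}\rangle_t=\frac1n\sum_{k\le[nt]}F^n_i(w^n_{i,k})\bigl(1-F^n_i(w^n_{i,k})\bigr)$, truncate on $\{\|\hatw^n_i\|_T\le K\}\cap\{t^n_{i,[nt]}\le T\}$, and exploit monotonicity of $F^n_i$ together with $\sqrt n F^n_i(K/\sqrt n)\to H_i(K)$ (hence $F^n_i(K/\sqrt n)\to 0$) to get $\langle\hatm^n_{i,3}\rangle_t\le t\,F^n_i(K/\sqrt n)\to 0$ on the good event. Both approaches are valid and rest on the same underlying facts (stochastic boundedness of $\hatw^n_i$ and Assumption \ref{patience}); the paper's is more modular, packaging the smallness of the abandonment count into the already-proved $\barr^n_i\Go 0$, while yours is more self-contained and effectively re-runs, inside this lemma, the same kind of truncation the paper performed in \eqref{fluid-reneging-4} when it proved $\barr^n_i\Go 0$. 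One small point worth tightening: your reduction from $\langle\hatm^n_{i,3}\rangle_t\Go0$ to $[\hatm^n_{i,3}]_t\Go0$ via ``Doob on $[M]-\langle M\rangle$'' is correct but can be made cleaner by observing that $\E\bigl([\hatm^n_{i,3}]_t\bigr)=\E\bigl(\langle\hatm^n_{i,3}\rangle_t\bigr)$ and that $\langle\hatm^n_{i,3}\rangle_t\le t/4$ is uniformly bounded, so convergence in probability to $0$ plus bounded convergence gives $\E\bigl([\hatm^n_{i,3}]_t\bigr)\to0$ and hence $[\hatm^n_{i,3}]_t\Go0$ by Markov's inequality; this sidesteps any second-level quadratic-variation computation. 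Note that the paper's route avoids this reduction entirely by estimating $\E\bigl([\hatm^n_{i,3}]_t\bigr)$ in one step.
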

\begin{proof}
Recall that $\{\hatm^n_{i,3}(t)\}_{t\ge 0}$ is an $\{\clf^n_{i,t}\vee \sigma(d^n_{i,[nt]})\}_{t\ge 0}$ martingale, where $\clf^n_{i,t}$ is defined in \eqref{filtration}, and its quadratic variation is 
\bes
[\hatm^n_{i,3}]_t = \frac{1}{n} \sum_{k=1}^{[nt]} \left(1_{\{w^n_{i,k}\ge d^n_{i,k}\}} - F^n_i\left({w_{i,k}^n}\right)\right)^2.
\ees
We next observe that for $t\ge 0,$
\begin{align*}
\E([\hatm^n_{i,3}]_t) & = \frac{1}{n} \sum_{k=1}^{[nt]} \E\left(1_{\{w^n_{i,k}\ge d^n_{i,k}\}} - F^n_i\left({w_{i,k}^n}\right)\right)^2 \\
& = \frac{1}{n} \sum_{k=1}^{[nt]} \E\left(1_{\{w^n_{i,k}\ge d^n_{i,k}\}} - \E(1_{\{w^n_{i,k}\ge d^n_{i,k}\}} | \clf^n_{i,k})\right)^2 \\
& = \frac{1}{n} \sum_{k=1}^{[nt]} \left[\E\left(1_{\{w^n_{i,k}\ge d^n_{i,k}\}}\right) -  \E\left(\E^2(1_{\{w^n_{i,k}\ge d^n_{i,k}\}} | \clf^n_{i,k})\right)\right] \\
& \le \frac{1}{n} \sum_{k=1}^{[nt]} \E\left(1_{\{w^n_{i,k}\ge d^n_{i,k}\}} \right) \\
& = \E\left(\frac{1}{n} \sum_{k=1}^{[nt]} 1_{\{w^n_{i,k}\ge d^n_{i,k}\}} 1_{\{N^n_i(2t/\lambda) \ge n t\}} \right) +\E\left( \frac{1}{n} \sum_{k=1}^{[nt]} 1_{\{w^n_{i,k}\ge d^n_{i,k}\}} 1_{\{N^n_i(2t/\lambda) < n t\}}\right) \\
& \le \E\left(\frac{1}{n} \sum_{k=1}^{N^n_i(2t/\lambda)} 1_{\{w^n_{i,k}\ge d^n_{i,k}\}} \right) + t \PP(N^n_i(2t/\lambda) < n t) \\
& = \E(\barr^n_i(2t/\lambda)) + t \PP(\barn^n_i(2t/\lambda) < t).
\end{align*}
Noting that $\barn^n_i(2t/\lambda)\Go 2t$ as $n\to\infty$, thus $\PP(\barn^n_i(2t/\lambda_i) < t)\to 0.$ Next from Lemma \ref{fluid}, we have $\barr^n_i \Go 0$, and we further observe that $\barr^n_i$ is uniformly integrable (which follows from the uniform integrability of $\barn^n_i$). Thus $\E(\barr^n_i(2t/\lambda))\to 0$. 
Finally, the result follows from Lemma \ref{martingale-convergence}.
\end{proof}

\begin{lemma}\label{martingale-conv-3}
Assume $\hatq^n(0)$ is stochastically bounded. Let $\hat\xi_i$ be as in \eqref{mart-4}. Then under Assumptions \ref{arrival}, \ref{patience}, we have $\hat\xi^n_i \Go 0.$
\end{lemma}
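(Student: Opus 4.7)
The plan is to reduce immediately to $i=1$: since $Q^n_{-1}(0)=0$ by convention, the sum defining $\hat\xi^n_{-1}$ in \eqref{mart-4} is empty, so $\hat\xi^n_{-1}\equiv 0$. For $i=1$, the starting observation is the uniform bound on initial offered waiting times that comes from \eqref{waiting-negative}: for $k=-Q^n_1(0)+1,\dots,0$,
\bes
w^n_{1,k}=t^n_{-1,\,-k+1-R^n_{1,k}(0)}\le t^n_{-1,\,Q^n_1(0)},
\ees
because $-k+1\le Q^n_1(0)$ and $R^n_{1,k}(0)\ge 0$.

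Next I would use stochastic boundedness of $\hat q^n(0)=\hat q^n_1(0)$ to truncate. For $K,\epsilon>0$,
\bes
\PP(\hat\xi^n_1>\epsilon)\le \PP\bigl(\hat\xi^n_1>\epsilon,\;\hat q^n(0)\le K\bigr)+\PP(\hat q^n(0)>K),
\ees
and on $\{\hat q^n(0)\le K\}$ every $w^n_{1,k}$ is bounded by $\tilde w^{n,K}:=t^n_{-1,\lceil K\sqrt n\rceil}=\sum_{l=1}^{\lceil K\sqrt n\rceil}u^n_{-1,l}$. Hence
\bes
\hat\xi^n_1\,\mathbf 1_{\{\hat q^n(0)\le K\}}\le \frac{1}{\sqrt n}\sum_{k=-\lceil K\sqrt n\rceil+1}^{0}\mathbf 1_{\{d^n_{1,k}<\tilde w^{n,K}\}}.
\ees
Because the sequence $\{d^n_{1,k}\}_{k\in\ZZ}$ is i.i.d.\ and independent of both $Q^n_1(0)$ and the Class $-1$ inter-arrivals that define $\tilde w^{n,K}$, conditioning on $\tilde w^{n,K}$ gives
\bes
\E\!\left[\frac1{\sqrt n}\sum_{k=-\lceil K\sqrt n\rceil+1}^{0}\mathbf 1_{\{d^n_{1,k}<\tilde w^{n,K}\}}\right]=\frac{\lceil K\sqrt n\rceil}{\sqrt n}\,\E\bigl[F^n_1(\tilde w^{n,K})\bigr].
\ees

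The key estimate is then $\E[F^n_1(\tilde w^{n,K})]\to 0$. Writing $\tilde u^n_{-1,l}=n u^n_{-1,l}$ and applying the law of large numbers for the triangular array (Assumption \ref{arrival}) gives
\bes
\sqrt n\,\tilde w^{n,K}=\frac{1}{\sqrt n}\sum_{l=1}^{\lceil K\sqrt n\rceil}\tilde u^n_{-1,l}\;\longrightarrow\;\frac{K}{\lambda_{-1}}\quad\text{in probability.}
\ees
Combined with Assumption \ref{patience} this yields $\sqrt n\,F^n_1(\tilde w^{n,K})\to H_1(K/\lambda_{-1})$ in probability, so $F^n_1(\tilde w^{n,K})\to 0$ in probability, and in $L^1$ by bounded convergence. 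Markov's inequality then gives $\PP(\hat\xi^n_1>\epsilon,\;\hat q^n(0)\le K)\to 0$, and letting $K\to\infty$ together with stochastic boundedness of $\hat q^n(0)$ produces $\hat\xi^n_1\Go 0$.

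The only subtle point is justifying the substitution of the random argument $\tilde w^{n,K}$ into the scaling limit $\sqrt n F^n_1(t/\sqrt n)\to H_1(t)$, which Assumption \ref{patience} furnishes only pointwise. This is where I would invoke the standard fact that a pointwise limit of nondecreasing functions whose limit is continuous (here $H_1$ is locally Lipschitz) is automatically uniform on compact sets; since $\sqrt n\,\tilde w^{n,K}$ is stochastically bounded, uniform convergence on a compact set of the form $[0,M]$ containing all realizations with high probability is enough to transfer the limit to the random argument. This is the main technical step; once it is in place the rest of the argument is a straightforward Markov-plus-truncation estimate.
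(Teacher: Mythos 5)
Your proof is correct, and it takes a genuinely different route from the paper's. The paper decomposes $\hat\xi^n_1$ into a martingale piece $\tilde M^n(\bar q^n(0))$ (killed by $C$-tightness of $\tilde M^n$ together with $\bar q^n(0)\Rightarrow 0$ and a random time change) plus a predictable piece $\frac{1}{\sqrt n}\sum_{k=-Q^n(0)+1}^0 F^n_1(w^n_{1,k})$, which it then bounds by $\sqrt n\,F^n_1(K/\sqrt n)\,\bar q^n(0)$ after truncating on $\{\sup_k\sqrt n\,w^n_{1,k}\le K\}$; the key multiplier $\sqrt n\,F^n_1(K/\sqrt n)\to H_1(K)$ is evaluated at the deterministic point $K$, so no uniformity in Assumption~\ref{patience} is needed. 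You instead skip the martingale decomposition altogether: after truncating on $\{\hat q^n(0)\le K\}$ you dominate $w^n_{1,k}$ by the Class~$-1$ quantity $\tilde w^{n,K}=t^n_{-1,\lceil K\sqrt n\rceil}$, which is independent of the patience times, so a single Markov/expectation bound $\frac{\lceil K\sqrt n\rceil}{\sqrt n}\,\E[F^n_1(\tilde w^{n,K})]$ suffices. The price is that you must feed a \emph{random} argument into the scaling limit $\sqrt n\,F^n_1(\cdot/\sqrt n)\to H_1(\cdot)$, which you correctly justify by noting that pointwise convergence of nondecreasing functions to a continuous limit is locally uniform (a Polya/Dini-type fact), combined with stochastic boundedness of $\sqrt n\,\tilde w^{n,K}$. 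Both approaches rely on the same two ingredients (stochastic boundedness of $\hat q^n(0)$ and the diffusion-scale smallness of $F^n_1$), but yours is more elementary and self-contained, while the paper's mirrors the martingale machinery used in Lemmas~\ref{martingale-conv-1}--\ref{martingale-conv-2} and so fits more uniformly with the surrounding proofs. One small technical remark: you should be slightly careful with the strict versus weak inequalities ($\{d^n_{1,k}<\tilde w^{n,K}\}$ versus $\{d^n_{1,k}\le \tilde w^{n,K}\}$) when passing from the conditional probability to $F^n_1$, since $F^n_1$ is not assumed continuous; this only affects a one-sided inequality and does not break the argument, but it is worth flagging explicitly.
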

\begin{proof} Noting that we assume $Q_1(0)\ge 0$ and $Q_{-1}(0)=0$, so $\hat\xi^n_{-1} \equiv 0$, and it only needs to show $\hat\xi^n_1\Go 0.$ Recall $\{\clf^n_{1,k}: k =0, -1, -2, \ldots\}$ defined in \eqref{flitration-neg}. From Lemma \ref{filt}, $w^n_{1,k}\in \clf^n_{1,k}$ and $d^n_{1,k}$ is independent of $\clf^n_{1,k}, k\in\ZZ$. 
 Define for $t\ge 0$, 
\bes
\tilde M^n(t) = \frac{1}{\sqrt{n}} \sum_{k=-\lfloor nt \rfloor+1}^{0} [1_{\{d^n_{1,k}  \le w^n_{1,k}\}} - F^n_1(w^n_{1,k})]
\ees
We see that $\{\tilde M^n(t)\}_{t\ge 0}$ is a $\{\clf^n_{1,-\lfloor nt \rfloor} \vee \sigma(d^n_{1,-\lfloor nt \rfloor})\}_{t\ge 0}$ martingale, and 
\begin{align*}
[\tilde M^n]_t & = \frac{1}{n} \sum_{k=-\lfloor nt \rfloor+1}^{0} [1_{\{d^n_{1,k}  \le w^n_{1,k}\}} - F^n_1(w^n_{1,k})]^2, \ t\ge 0.
\end{align*}
It is clear that $\{[\tilde M^n]_t\}_{t\ge 0}$ is $C$-tight and from Theorem VI.4.13 of \cite{js03}, we conclude that $\{\tilde M^n(t)\}_{t\ge 0}$ is also $C$-tight. Using the fact that $\barq^n(0)\Go 0$, we obtain that 
\begin{equation}\label{part-1}
\tilde M^n(\barq^n(0)) \Go 0, \ \mbox{as $n\to\infty.$}
\end{equation}
Thus it suffices to show that for $\delta> 0$,  
\begin{align*}
\lim_{n\to\infty}\PP\left(\frac{1}{\sqrt{n}} \sum_{k=-Q^n(0)+1}^{0} F^n_1(w^n_{1,k}) > \delta\right) = 0.
\end{align*}
We observe that for all $k=-Q^n(0)+1, \ldots, -1, 0$, 
\begin{align*}
\sqrt{n} w^n_{1,k} & \le \sqrt{n} t^n_{-1, Q^n(0)}  = \sqrt{n}\sum_{l=1}^{Q^n(0)} u^n_{-1,l} = \tilde M^n_{-1}(Q^n(0)/\lambda^n_{-1}) + \frac{n}{\lambda^n_{-1}}\hatq^n(0), 
\end{align*}
where $\tilde M^n_{-1}$ is defined in \eqref{iid-sum}. Thus for each $k=-Q^n(0)+1, \ldots, -1, 0$, $\sqrt{n}w^n_{1,k}$ is stochastically bounded, and for $\delta > 0,$
\begin{align*}
&\limsup_{n\to\infty} \PP\left(\frac{1}{\sqrt{n}} \sum_{k=-Q^n(0)+1}^{0} F^n_1(w^n_{1,k}) > \delta\right)\\
& \le  \lim_{K\to\infty}\limsup_{n\to\infty}\PP\left(\frac{1}{\sqrt{n}} \sum_{k=-Q^n(0)+1}^{0} F^n_1(w^n_{1,k}) > \delta, \ \sqrt{n}w^n_{1,k}\le K\right) \\
& \quad +\lim_{K\to\infty}\limsup_{n\to\infty} \PP\left(\sqrt{n}w^n_{1,k}>K\right)\\
& \le\lim_{K\to\infty} \limsup_{n\to\infty} \PP\left(\sqrt{n}F^n_1(K/\sqrt{n}) \barq^n(0) > \delta) \right) \\
& \quad+\lim_{K\to\infty} \limsup_{n\to\infty}\PP\left(\sqrt{n}w^n_{1,k}>K\right)\\
& =0. 
\end{align*}
Combining the above convergence and \eqref{part-1}, we have $\hat\xi_1^n \Go 0.$
\end{proof}

\subsection{Weak convergence of $(\hatw^n_1, \hatw^n_{-1})$}\label{sec:waiting}

We prove the $C$-tightness of $(\hatw_1^n, \hatw^n_{-1})$ in Lemma \ref{waiting-tight}, and establish the weak convergence of $(\hatw_1^n, \hatw^n_{-1})$ in Theorem \ref{waiting-convergence}. The weak limit is uniquely characterized by a continuous functional defined in Proposition \ref{unique-map}. 

\begin{lemma}\label{waiting-tight}
Assume that $\hatq^n$ is stochastically bounded. Then under Assumptions \ref{arrival}, \ref{htc} and \ref{patience}, $(\hatw^n_1, \hatw^n_{-1})$ is $C$-tight.
\end{lemma}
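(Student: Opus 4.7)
The plan is to apply the decomposition of Lemma \ref{decomp} and show that each of the summands forming the bracketed expression in \eqref{vw-fluid} is $C$-tight. Since the map $x \mapsto x^+$ is continuous from $\mathcal{D}([0,\infty);\RR)$ to itself and sends continuous paths to continuous paths, it suffices to establish the $C$-tightness of the argument of $[\,\cdot\,]^+$ jointly in $i = \pm 1$.

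Several of the terms have already been handled in Section \ref{sec:decomp}. By Lemma \ref{martingale-conv-1}, the pair $(\hat M^n_{i,1}, \hat M^n_{i,2})$ converges weakly to a continuous process, hence is $C$-tight. By the fluid limit $\barn^n_i \Go \lambda\iota$ (Lemma \ref{fluid}), by $\hat M^n_{i,3} \Go 0$ (Lemma \ref{martingale-conv-2}), and by the random time change theorem, $\frac{n}{\lambda^n_{-i}}\hat M^n_{i,3}(\barn^n_i(\cdot)) \Go 0$, which is trivially $C$-tight. Lemma \ref{martingale-conv-3} gives $\hat\xi^n_i \Go 0$. The remaining constant-in-$t$ initial terms involve $\hatq^n_i(0)$, which is stochastically bounded by hypothesis, so they are $C$-tight.

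The only non-trivial component is the integral
\[
I^n_i(t) \;=\; \frac{\sqrt{n}}{\lambda^n_{-i}}\int_0^t F^n_i\bigl(W^n_i(u-)\bigr)\,\D N^n_i(u).
\]
I will first note, as already observed in the proof of Lemma \ref{fluid} (see \eqref{fluid-waiting} and \eqref{iw-thm}), that stochastic boundedness of $\hatq^n$ implies stochastic boundedness of $\hatw^n_i$. Using $W^n_i = \hatw^n_i/\sqrt{n}$ together with Assumption \ref{patience}, on the event $\{\|\hatw^n_i\|_T \le K\}$ the integrand satisfies the deterministic bound
\[
\sup_{0\le u \le T} \sqrt{n}\, F^n_i\bigl(W^n_i(u-)\bigr) \;\le\; \sqrt{n}\,F^n_i\bigl(K/\sqrt{n}\bigr) \;\longrightarrow\; H_i(K).
\]
Writing $\D N^n_i(u) = \D(N^n_i(u) - \lambda^n_i u) + \lambda^n_i \,\D u$ splits $I^n_i$ into a ``drift'' piece and a ``centered'' piece. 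The drift piece $\frac{\lambda^n_i}{\lambda^n_{-i}}\int_0^t \sqrt{n} F^n_i(W^n_i(u-))\,\D u$ has modulus of continuity bounded on the $K$-event by $(H_i(K)+o(1))(t-s)$, yielding $C$-tightness by the Arzel\`a--Ascoli characterization. The centered piece will be shown to vanish by combining the uniform bound on the integrand on the $K$-event with the fact, implicit in Lemma \ref{martingale-conv-1}, that the oscillations of $N^n_i - \lambda^n_i\iota$ over compact time intervals are of order $\sqrt{n}$; after division by $\lambda^n_{-i} \sim n\lambda$, the contribution is $o_P(1)$.

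The main obstacle is this centered piece: because $N^n_i$ is a general renewal process rather than Poisson, $N^n_i - \lambda^n_i \iota$ is not a martingale in the natural filtration, and moreover the integrand depends on $\hatw^n_i$ and is not predictable with respect to any convenient filtration that makes the increments of $N^n_i - \lambda^n_i \iota$ mean-zero. I plan to circumvent this by conditioning on the $K$-event $\{\|\hatw^n_i\|_T \le K\}$ to replace the integrand by a uniformly bounded quantity of order $O(1)$, then use integration by parts together with the FCLT for the renewal process (Theorem 1 of \cite{iw71}, already invoked in Lemma \ref{martingale-conv-1}) to bound the centered piece in probability by $O\bigl((H_i(K)+o(1))\,\|\hatn^n_i\|_T/\sqrt{n}\bigr) = o_P(1)$. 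Putting all the pieces together, the argument of $[\,\cdot\,]^+$ in \eqref{vw-fluid} is a sum of $C$-tight processes, hence is itself $C$-tight, and so is $(\hatw^n_1, \hatw^n_{-1})$.
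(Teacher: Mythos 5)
Your overall framework — decompose $\hatw^n_i$ via Lemma~\ref{decomp}, show each piece is $C$-tight, and use Lipschitz continuity of $x\mapsto x^+$ to get from the bracketed argument to $\hatw^n_i$ — is precisely the paper's strategy, and your treatment of $\hatm^n_{i,1},\hatm^n_{i,2},\hatm^n_{i,3}(\barn^n_i(\cdot)),\hat\xi^n_i$ and the initial term matches. The problem is in the integral term $I^n_i$, where you make an unnecessary detour that runs into a real gap.

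Splitting $\D N^n_i(u) = \D(N^n_i(u)-\lambda^n_i u)+\lambda^n_i\,\D u$ creates a ``centered piece'' that you then cannot control. Integration by parts gives $\int_0^t g(u-)\,\D M^n(u) = g(t)M^n(t)-\int_0^t M^n(u)\,\D g(u)$ with $g=\sqrt{n}F^n_i(W^n_i(\cdot))$ and $M^n = N^n_i-\lambda^n_i\iota$. The boundary term is harmless (it is $O(H_i(K)\sqrt{n})/\lambda^n_{-i}=o_P(1)$), but the second term requires controlling the total variation of $g$. Between arrivals $W^n_i$ decreases at rate one and, under Assumption~\ref{patience}, $F^n_i$ has effective slope of order $1$ on the $\sqrt{n}$-scale, so $g$ has derivative of order $\sqrt{n}$; the jumps contribute comparably. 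Thus $\mathrm{TV}(g)=O(\sqrt{n})$ on $[0,T]$, and the crude estimate $\bigl|\int_0^t M^n\,\D g\bigr|\le \|M^n\|_T\cdot\mathrm{TV}(g)=O(\sqrt{n})\cdot O(\sqrt{n})=O(n)$, which after division by $\lambda^n_{-i}\sim n\lambda$ is $O(1)$, not $o_P(1)$. Whether cancellation rescues this is not at all clear, and your one-sentence assertion does not justify it. The decomposition of the integrator is the wrong move: the integrand $F^n_i(W^n_i(u-))$ is \emph{nonnegative} and, on $\{\|\hatw^n_i\|_T\le K\}$, bounded by $F^n_i(K/\sqrt{n})$, so since $\D N^n_i\ge 0$ one has directly
\[
\frac{\sqrt{n}}{\lambda^n_{-i}}\int_s^t F^n_i(W^n_i(u-))\,\D N^n_i(u)\;\le\;\sqrt{n}F^n_i(K/\sqrt{n})\cdot\frac{n}{\lambda^n_{-i}}\bigl(\barn^n_i(t)-\barn^n_i(s)\bigr)
\]
on the $K$-event. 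Since $\sqrt{n}F^n_i(K/\sqrt{n})\to H_i(K)$ and $\frac{n}{\lambda^n_{-i}}\barn^n_i\Go\iota$ is $C$-tight, letting $(t-s)\downarrow 0$ and then $K\to\infty$ finishes the argument. This is exactly the paper's proof and requires no separation into drift and centered parts.
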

\begin{proof}
From \eqref{fluid-waiting} and \eqref{iw-thm} in the proof of Lemma \ref{fluid}, we know that $\hatw^n_i$ is stochastically bounded. Now from \eqref{vw-fluid}, for $0\le s \le t <\infty,$
\begin{align*}
|\hatw^n_i(t) -\hatw^n_i(s)| & \le |\hatm^n_{i,1}(t) -\hatm^n_{i,1}(s)| + |\hatm^n_{i,2}(t) -\hatm^n_{i,2}(s)| + \frac{n}{\lambda^n_{-i}} |\hat M^n_{i,3}(\bar N^n_i(t)) -\hat M^n_{i,3}(\bar N^n_i(s))| \\
& \quad + \frac{n}{\lambda^n_{-i}} |\hatm^n_{-i,3}(\bar N^n_{-i}(t)) -\hatm^n_{-i,3}(\bar N^n_{-i}(s))| \\
& \quad + \frac{\sqrt{n}}{\lambda^n_{-i}} \int_s^t F^n_i(W^n_i(u-)) \D N^n_i(u) + \frac{\sqrt{n}}{\lambda^n_{-i}}   \int_s^t F^n_{-i}(W^n_{-i}(u-)) \D N^n_{-i}(u).
\end{align*}
From Lemmas \ref{fluid}, \ref{martingale-conv-1} and \ref{martingale-conv-2}, $(\hatm^n_{i,1}(\cdot), \hatm^n_{i,2}(\cdot), \hatm^n_{i,3}(\barn^n_{i}(\cdot)), \hatm^n_{-i,3}(\barn^n_{-i}(\cdot)))$ are weakly convergent. We next note that for $\delta>0,$
\begin{align*}
& \PP\left(\frac{\sqrt{n}}{\lambda^n_{-i}} \int_s^t F^n_i(W^n_i(u-)) \D N^n_i(u) > \delta \right)\\
& = \PP\left(\frac{1}{\lambda^n_{-i}} \int_s^t \sqrt{n}F^n_i(W^n_i(u-)) \D N^n_i(u) > \delta, \|\hatw^n_i\|_{s,t} \le K \right) + \PP( \|\hatw^n_i\|_{s,t} > K) \\
& \le \PP\left(\sqrt{n}F^n_i(K/\sqrt{n}) \ \frac{n}{\lambda^n_{-i}} \left(\barn^n_i(t) -\barn^n_i(s)\right)  > \delta\right) + \PP( \|\hatw^n_i\|_{t} > K).
\end{align*}
Using Assumption \ref{patience} and noting that $\barn^n_i$ is $C$-tight and $\hatw^n_i$ is stochastically bounded, we have that for $\delta>0,$
\begin{align*}
& \lim_{(t-s)\downarrow 0}\limsup_{n\to\infty}  \PP\left(\frac{\sqrt{n}}{\lambda^n_{-i}} \int_s^t F^n_i(W^n_i(u-)) \D N^n_i(u) > \delta \right)  \\
& \le \lim_{K \to\infty}\lim_{(t-s)\downarrow 0}\limsup_{n\to\infty} \PP\left(\sqrt{n}F^n_i(K/\sqrt{n}) \ \frac{n}{\lambda^n_{-i}} \left(\barn^n_i(t) -\barn^n_i(s)\right)  > \delta\right) \\
& \quad + \lim_{K \to\infty}\limsup_{n\to\infty} \PP( \|\hatw^n_i\|_{t} > K) \\
& = 0.
\end{align*}
The result follows. 
\end{proof}

\begin{proposition}\label{unique-map}
Let $H_1, H_{-1}: [0,\infty)\to[0,\infty)$ be nonnegative locally Lipschitz continuous functions as in Assumption \ref{patience}. 
\begin{itemize}
\item[\rm (i)] Given $x\in D([0;\infty), \RR)$, there exists a unique pair of $(w_1, w_{-1})$ such that $w_i \in D([0;\infty), \RR_+), i=\pm 1,$ and for $t\ge 0,$
\begin{align}
w_1(t) & = \left[x(t) - \int_0^t H_1(w_1(s))\D s + \int_0^t H_{-1}(w_{-1}(s))\D s\right]^+,\label{eq:51} \\
w_{-1}(t) & = \left[x(t) - \int_0^t H_1(w_1(s))\D s + \int_0^t H_{-1}(w_{-1}(s))\D s\right]^-. \label{eq:52}
\end{align}
\item[\rm (ii)] Define the functionals $\Psi_1, \Psi_{-1}: D([0,\infty); \RR) \to D([0,\infty); \RR_+)$ by 
\begin{equation}\label{waiting-limit}
(\Psi_1, \Psi_{-1})(x) = (w_1, w_{-1}).
\end{equation}
Then $(\Psi_1, \Psi_{-1})$ is continuous on $D([0,\infty); \RR)$ with Skorohod $J_1$-topology.
\end{itemize}
\end{proposition}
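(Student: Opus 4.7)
My plan is to reduce the two-dimensional system \eqref{eq:51}--\eqref{eq:52} to a single scalar fixed-point equation, solve that by Picard iteration, and then establish $J_1$-continuity by composing with appropriate time-changes.

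\textbf{Reduction.} Since the bracketed expression in \eqref{eq:51}--\eqref{eq:52} is identical in both lines, setting
$z(t) := x(t) - \int_0^t H_1(w_1(s))\,\D s + \int_0^t H_{-1}(w_{-1}(s))\,\D s$
forces $w_1 = z^+$, $w_{-1} = z^-$, and hence $z = w_1 - w_{-1}$. Thus the system is equivalent to the scalar equation
\begin{equation*}
z(t) = x(t) - \int_0^t H_1(z^+(s))\,\D s + \int_0^t H_{-1}(z^-(s))\,\D s, \qquad z \in D([0,\infty); \RR),
\end{equation*}
with $(w_1, w_{-1}) = (z^+, z^-)$. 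Since the map $u \mapsto (u^+, u^-)$ is continuous $\RR \to \RR^2$ (indeed $1$-Lipschitz in each component) and therefore induces a continuous map on $D$ in the $J_1$-topology, it suffices to prove existence, uniqueness, and continuous dependence on $x$ for $z$.

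\textbf{Existence and uniqueness.} For (i) I would apply Picard iteration to the operator $\Phi(z)(t) := x(t) - \int_0^t H_1(z^+(s))\,\D s + \int_0^t H_{-1}(z^-(s))\,\D s$ on $D([0,T]; \RR)$. Using $|a^\pm - b^\pm| \le |a - b|$ and the local Lipschitz constant $\kappa_R$ of $H_1 + H_{-1}$ on $[0, R]$, one obtains $|\Phi(z_1)(t) - \Phi(z_2)(t)| \le 2\kappa_R \int_0^t \|z_1 - z_2\|_s\,\D s$ whenever $\|z_i\|_T \le R$, so a standard induction makes $\Phi^k$ a contraction on bounded balls of $D([0,T]; \RR)$ for $k$ large. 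A Gronwall estimate applied to $\|z\|_t \le \|x\|_t + \int_0^t \bigl(H_1(\|z\|_s) + H_{-1}(\|z\|_s)\bigr)\,\D s$ provides a uniform a priori bound on $\|z\|_T$ that rules out finite-time blow-up (note that $-H_1(z^+)$ and $H_{-1}(z^-)$ act as damping terms of opposite sign); the same Gronwall bound gives uniqueness and thus extends the local solution to all of $[0,\infty)$.

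\textbf{Continuity.} For (ii), let $x_n \to x$ in $J_1$ with corresponding fixed points $z_n$ and $z$. Fix $T>0$ and, using the strong characterization of $J_1$-convergence, choose strictly increasing Lipschitz time-changes $\lambda_n:[0,T]\to\lambda_n([0,T])$ with $\|\lambda_n - \iota\|_T \to 0$, $\|x_n \circ \lambda_n - x\|_T \to 0$, and $\|\lambda_n' - 1\|_{L^\infty[0,T]} \to 0$. Changing variables $s = \lambda_n(u)$ in the integral equation for $z_n$ evaluated at $\lambda_n(t)$, the composition $\tilde z_n := z_n \circ \lambda_n$ satisfies
\begin{equation*}
\tilde z_n(t) = x_n(\lambda_n(t)) - \int_0^t H_1(\tilde z_n^+(u))\lambda_n'(u)\,\D u + \int_0^t H_{-1}(\tilde z_n^-(u))\lambda_n'(u)\,\D u.
\end{equation*}
Subtracting the equation for $z$ and splitting each integrand as $H_i(\tilde z_n^\pm)\lambda_n' - H_i(z^\pm) = (H_i(\tilde z_n^\pm) - H_i(z^\pm))\lambda_n' + H_i(z^\pm)(\lambda_n' - 1)$, a Gronwall application (after obtaining a uniform bound on $\|\tilde z_n\|_T$ by the same estimate as in Step 2) yields $\|\tilde z_n - z\|_T \to 0$, equivalent to $z_n \to z$ in $J_1$ on $[0,T]$. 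The step I expect to be the main obstacle is showing that the error term $\int_0^t H_i(z^\pm(u))(\lambda_n'(u) - 1)\,\D u$ vanishes uniformly in $t$, since $z$ may have infinitely many jumps and $\lambda_n'$ need not converge pointwise. This is resolved by undoing the substitution, writing $\int_0^t H_i(z^\pm(u))\lambda_n'(u)\,\D u = \int_0^{\lambda_n(t)} H_i(z^\pm(\lambda_n^{-1}(s)))\,\D s$, and then using $\|\lambda_n - \iota\|_T \to 0$ together with the bounded convergence theorem and the fact that $z^\pm$ is continuous at almost every $s$. Transferring back through $u \mapsto (u^+, u^-)$ completes the continuity of $(\Psi_1, \Psi_{-1})$.
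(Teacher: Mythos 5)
Your proof is correct and takes a genuinely different, and in places cleaner, route than the paper's. The main structural difference is the scalar reduction: you set $z = w_1 - w_{-1}$, observe that \eqref{eq:51}--\eqref{eq:52} force $(w_1, w_{-1}) = (z^+, z^-)$ with $z$ solving a single integral equation, and exploit that $u \mapsto (u^+, u^-)$ is $1$-Lipschitz, hence $J_1$-continuous when applied with a common time change. The paper never makes this reduction explicit and runs Picard and Gronwall directly on the pair $(w_1, w_{-1})$; your scalar form is tidier to estimate and is in fact the equation satisfied by $Q/\lambda$ in Theorem~\ref{diffusion-thm}, so the reduction also makes the downstream connection transparent. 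For the continuity step (ii), the paper compares $\sup_{t\le T}|w_1^n(\lambda^n(t)) - w_1(t)|$ directly and in doing so invokes a bound in terms of $\sup_{t\le T}|x^n(t) - x^n(\lambda^n(t))|$, a quantity which does not vanish when $x$ has jumps (it does in the paper's application, since there $x$ is a Brownian path); your change-of-variables route, which isolates the error $\int_0^t H_i(z^\pm(u))(\lambda_n'(u)-1)\,\D u$ and kills it by undoing the substitution and using dominated convergence at the a.e.\ continuity points of $z$, is the more careful treatment of the general $J_1$ case. Do make explicit that the Billingsley $d^\circ$-metric lets you take the $\lambda_n$ bi-Lipschitz with $\|\lambda_n' - 1\|_{L^\infty} \to 0$, which you tacitly use. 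One gap you share with the paper and should patch: the a priori estimate $\|z\|_t \le \|x\|_t + \int_0^t (H_1 + H_{-1})(\|z\|_s)\,\D s$ does not close by plain Gronwall, since $H_1, H_{-1}$ are only locally Lipschitz; you need either the paper's Osgood-type $\Phi$-device or, more simply, the sign structure you allude to in passing: whenever $z > 0$ the drift contribution is $-H_1(z) \le 0$ (and symmetrically for $z < 0$), so over any excursion of $z$ away from zero the increment of $z$ is dominated by that of $x$, yielding $\|z\|_T \le 2\|x\|_T$ outright with no growth restriction on $H_1, H_{-1}$.
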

\begin{proof}
See Appendix.
\end{proof}

\begin{theorem}\label{waiting-convergence} 
Assume that $\hatq^n(0)$ converges weakly to some random variable $q$, and $\hatq^n$ is stochastically bounded. Then under Assumptions \ref{arrival}, \ref{htc}, and \ref{patience}, 
\bes
(\hatw^n_1, \hatw^n_{-1}) \Go (\Psi_1, \Psi_{-1})(X),
\ees
where $X$ is a Brownian motion with drift $\frac{c}{\lambda}$ and variance $\lambda(\sigma_1^2+\sigma_{-1}^2)$, and initial value $X(0) = \frac{q}{\lambda}.$
\end{theorem}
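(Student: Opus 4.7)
The plan is to combine the decomposition of $\hatw^n_i$ in Lemma \ref{decomp} with the convergences already established in Lemmas \ref{fluid}, \ref{martingale-conv-1}, \ref{martingale-conv-2} and \ref{martingale-conv-3}, and then invoke the uniqueness statement in Proposition \ref{unique-map} to identify the limit. First I would combine the $C$-tightness of $(\hatw^n_1,\hatw^n_{-1})$ from Lemma \ref{waiting-tight} with the joint weak convergence of $(\hatm^n_{i,1},\hatm^n_{i,2})$, $\hatm^n_{i,3}(\barn^n_i(\cdot))$, $\hat\xi^n_i$, $\barn^n_i$ and $\hatq^n(0)$, to obtain tightness of the full vector of processes entering \eqref{vw-fluid}. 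Passing to a weakly convergent subsequence and invoking the Skorohod representation theorem, I may assume a.s.\ uniform-on-compacts convergence; denote the limit of $(\hatw^n_1,\hatw^n_{-1})$ by $(w_1,w_{-1})$, which lies in $C([0,\infty);\RR_+^2)$ because of $C$-tightness.

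The main step, and the principal obstacle, is to identify the limit of the ``cross'' drift term
\bes
\frac{\sqrt n}{\lambda^n_{-i}}\int_0^t F^n_i(W^n_i(u-))\,\D N^n_i(u)\;=\;\frac{n}{\lambda^n_{-i}}\int_0^t \bigl(\sqrt n\,F^n_i(\hatw^n_i(u-)/\sqrt n)\bigr)\,\D\barn^n_i(u).
\ees
I would first upgrade the pointwise convergence $\sqrt n\,F^n_i(t/\sqrt n)\to H_i(t)$ from Assumption \ref{patience} to uniform convergence on compact subsets of $[0,\infty)$: the prelimits are nondecreasing, and $H_i$ is continuous since it is locally Lipschitz, so uniform convergence on compacts follows by a standard Dini-type argument. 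Coupled with the a.s.\ uniform convergence $\hatw^n_i\to w_i$ and continuity of $H_i$, this yields $\sqrt n\,F^n_i(\hatw^n_i(\cdot-)/\sqrt n)\to H_i(w_i(\cdot))$ uniformly on $[0,T]$. Using $\barn^n_i\to\lambda\iota$ from Lemma \ref{fluid} and $n/\lambda^n_{-i}\to 1/\lambda$, together with the standard fact that Stieltjes integrals of uniformly convergent integrands against uniformly convergent nondecreasing integrators converge, I obtain that the displayed quantity tends uniformly on compacts to $\int_0^\cdot H_i(w_i(u))\,\D u$.

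The remaining pieces of \eqref{vw-fluid} are treated directly: $\hatm^n_{i,3}(\barn^n_i(\cdot))$ and $\hat\xi^n_i$ vanish by Lemmas \ref{martingale-conv-2}--\ref{martingale-conv-3}, $\hatm^n_{i,1}+\hatm^n_{i,2}$ is handled by Lemma \ref{martingale-conv-1}, and the initial-condition bracket $\frac{n}{\lambda^n_{-i}}(\frac1{\sqrt n}+\hatq^n_i(0)-\hatq^n_{-i}(0)-\hat\xi^n_i+\hat\xi^n_{-i})$ converges a.s.\ to $i\,q/\lambda$ (using $\hatq^n_{-1}(0)=0$). Since $x\mapsto x^+$ is continuous, for $i=1$ the argument of the positive part in \eqref{vw-fluid} converges a.s.\ on compacts to
\bes
X(t)-\int_0^t H_1(w_1(s))\,\D s+\int_0^t H_{-1}(w_{-1}(s))\,\D s,
\ees
where $X(t):=\frac{q}{\lambda}+\frac{c}{\lambda}t+Y_{-1}(t)-Y_1(t)$, while for $i=-1$ it converges to the negative of this same expression (a direct sign bookkeeping based on Lemma \ref{martingale-conv-1}). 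Since $Y_1$ and $Y_{-1}$ are independent of each other and of $q$, and $Y_i(t)=\sigma_i B_i(\lambda t)$, the process $X$ is a Brownian motion with drift $c/\lambda$, variance $\lambda(\sigma_1^2+\sigma_{-1}^2)$ and initial value $q/\lambda$. The limit $(w_1,w_{-1})$ therefore satisfies \eqref{eq:51}--\eqref{eq:52} with $x=X$, so Proposition \ref{unique-map}(i) forces $(w_1,w_{-1})=(\Psi_1,\Psi_{-1})(X)$. Because every subsequential limit coincides, the full sequence $(\hatw^n_1,\hatw^n_{-1})$ converges weakly to $(\Psi_1,\Psi_{-1})(X)$, as claimed.
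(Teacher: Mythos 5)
Your proof is correct and takes essentially the same route as the paper: the decomposition of Lemma \ref{decomp}, $C$-tightness from Lemma \ref{waiting-tight}, Skorohod representation along a subsequence, term-by-term limit identification of the argument of $[\cdot]^+$ in \eqref{vw-fluid}, and finally uniqueness via Proposition \ref{unique-map}(i). The only difference in detail is at the reneging drift term: you upgrade Assumption \ref{patience} to uniform-on-compacts convergence via Pólya's theorem and pass the Stieltjes integral directly to $\int_0^\cdot H_i(w_i(u))\,\D u$, whereas the paper packages this as the process $\epsilon^n_i\Go 0$ by citing Lemma 2.4 of Dai and Williams and then also invokes the continuity statement Proposition \ref{unique-map}(ii) at the prelimit stage; both are valid, and your direct subsequential identification arguably sidesteps the slightly delicate application of (ii) to approximate fixed points.
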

\begin{proof}
From \eqref{vw-fluid}, for $i=\pm1$ and $t\ge 0,$
\begin{align*}
\hatw^n_i(t) & = \left[\hat M^n_{i,1}(t) + \hat M^n_{i,2}(t) - \frac{n}{\lambda^n_{-i}}\hat M^n_{i,3}(\bar N^n_i(t)) +  \frac{n}{\lambda^n_{-i}}\hat M^n_{-i,3}(\bar N^n_{-i}(t)) \right.\\
& \quad\quad   - \frac{\sqrt{n}}{\lambda^n_{-i}} \int_0^t F^n_i(W^n_i(u-)) \D N^n_i(u) + \frac{\sqrt{n}}{\lambda^n_{-i}}   \int_0^t F^n_{-i}(W^n_{-i}(u-)) \D N^n_{-i}(u)  \\
& \quad\quad \left.  + \frac{n}{\lambda^n_{-i}}\left(\frac{1}{\sqrt{n}}+\hatq^n_i(0) -\hatq^n_{-i}(0) -\hat\xi^n_i + \hat\xi^n_{-i}\right) \right]^+.
\end{align*}
Let 
\begin{align*}
\hatm^n_i(t) & = \hat M^n_{i,1}(t) + \hat M^n_{i,2}(t) - \frac{n}{\lambda^n_{-i}}\hat M^n_{i,3}(\bar N^n_i(t)) +  \frac{n}{\lambda^n_{-i}}\hat M^n_{-i,3}(\bar N^n_{-i}(t))  + \frac{n}{\lambda^n_{-i}}\left(\frac{1}{\sqrt{n}} -\hat\xi^n_i + \hat\xi^n_{-i}\right).
\end{align*}
From Lemmas \ref{martingale-conv-1}, \ref{martingale-conv-2}, and \ref{martingale-conv-3},  we have 
\be\label{eq:w51}
(\hatm^n_1, \hatm^n_{-1}) \Go (B_1, -B_1), 
 \ee 
where $B_1$ is a Brownian motion with drift $\frac{c}{\lambda}$ and variance $\lambda(\sigma^2_1+\sigma^2_{-1}).$
Next Lemma \ref{waiting-tight} establishes the $C$-tightness of $(\hatw^n_1, \hatw^n_{-1})$. Let $(\tilde W_1, \tilde W_{-1})$ be a weak limit of $(\hatw^n_1, \hatw^n_{-1})$ along with a subsequence $\{n_l\}_{l\in\NN}$. Using Skorohod representation theorem, we can assume $(\hatw^{n_l}_1, \hatw^{n_l}_{-1}, \hatm^{n_l}_{1},$ $\hatm^{n_l}_{-1}, \barn^{n_l}_1, \barn^{n_l}_{-1})$ converges to $(\tilde W_1, \tilde W_{-1}, B_1, -B_1, \lambda_1\iota, \lambda_{-1}\iota)$ almost surely and uniformly in compact sets of $[0,\infty).$ Define for $i=\pm1$ and $t\ge 0,$
\begin{align}
\epsilon_i^n(t) = \frac{n}{\lambda^n_{-i}} \int_0^t \sqrt{n}F^n_i(\hatw^n_i(u-)/\sqrt{n}) \D \barn^n_i(u) - \int_0^t H_i(\hatw^n_i(u)) \D u.
\end{align}
From Lemma 2.4 in \cite{daiwill96}, we have 
\begin{align}\label{eq:w52}
\epsilon_i^{n_l} \Go 0, \ \ \mbox{as $l\to\infty.$}
\end{align}
We can now rewrite the waiting time process as follows.
\begin{align*}
\hatw^n_i(t) & = \left[\hat M^n_{i}(t) + \frac{n}{\lambda^n_{-i}}\left(\hatq^n_i(0) -\hatq^n_{-i}(0) \right)  - \epsilon_i^n(t) + \epsilon_{-i}^n(t)   -\int_0^t H_i(\hatw^n_i(u))\D u \right. \\
&\quad \left.+ \int_0^t H_{-i}(\hatw^n_{-i}(u))\D u \right]^+.
\end{align*}
Using Proposition \ref{unique-map} (ii) and \eqref{eq:w51}, \eqref{eq:w52}, we have 
\be
(\tilde W_1, \tilde W_{-1}) \overset{d}{=} (\Psi_1, \Psi_{-1})(X),
\ee
where $X$ is a Brownian motion with drift $\frac{c}{\lambda}$, variance $\lambda(\sigma_1^2+\sigma_{-1}^2)$, and initial value $X(0) = q/\lambda.$ Finally, from the uniqueness of $(\Psi_1, \Psi_{-1})$ in Proposition \ref{unique-map} (i), we have $(\hatw^n_1, \hatw^n_{-1})\Go (\Psi_1, \Psi_{-1})(X).$
\end{proof}

\subsection{Weak convergence of $\hatr^n_i$ and $\hatg^n_i$}\label{sec:reneging}

The following two lemmas show that both $\hatr^n_i$ and $\hatg^n_i$ converge to 
\[
\lambda_i \int_0^\cdot H_i(\hatw_i(s))\D s,
\]
where $\hatw_i = \Psi_i(X)$, with $X$ defined in Theorem \ref{waiting-convergence}.

\begin{lemma}\label{abandon-5}  Assume that $\hatq^n$ is stochastically bounded. Then under Assumptions \ref{arrival}, \ref{htc}, and \ref{patience}, 
\bes
\hatr^n_i(\cdot) - \lambda_i \int_0^\cdot H_i(\hatw^n_i(s))\D s \Go 0.
\ees
\end{lemma}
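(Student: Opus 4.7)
\textbf{Proof proposal for Lemma \ref{abandon-5}.}

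The plan is to start from the representation of $\hatr^n_i$ already derived in the proof of Lemma \ref{decomp} (see equation \eqref{total-abandon}):
\bes
\hatr^n_i(t) = \hat\xi^n_i + \hatm^n_{i,3}(\barn^n_i(t)) + \frac{1}{\sqrt{n}}\int_0^t F^n_i(W^n_i(u-))\,\D N^n_i(u),\qquad t\ge 0.
\ees
The first two terms are easy to dispose of: Lemma \ref{martingale-conv-3} gives $\hat\xi^n_i\Go 0$; Lemma \ref{martingale-conv-2} gives $\hatm^n_{i,3}\Go 0$, and combining it with $\barn^n_i\Go \lambda_i\iota$ from Lemma \ref{fluid} via the random time change theorem yields $\hatm^n_{i,3}(\barn^n_i(\cdot))\Go 0$. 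It therefore suffices to establish
\be\label{plan:main}
\frac{1}{\sqrt{n}}\int_0^\cdot F^n_i(W^n_i(u-))\,\D N^n_i(u) - \lambda_i\int_0^\cdot H_i(\hatw^n_i(s))\,\D s \Go 0.
\ee

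To prove \eqref{plan:main}, I would rewrite the integral using $W^n_i(u-)=\hatw^n_i(u-)/\sqrt{n}$:
\bes
\frac{1}{\sqrt{n}}\int_0^t F^n_i(W^n_i(u-))\,\D N^n_i(u) = \int_0^t \sqrt{n}F^n_i\!\left(\frac{\hatw^n_i(u-)}{\sqrt{n}}\right)\D \barn^n_i(u).
\ees
This is exactly the first summand in the auxiliary process
\bes
\epsilon_i^n(t)=\frac{n}{\lambda^n_{-i}}\int_0^t \sqrt{n}F^n_i\!\left(\frac{\hatw^n_i(u-)}{\sqrt{n}}\right)\D \barn^n_i(u)-\int_0^t H_i(\hatw^n_i(u))\,\D u,
\ees
up to the factor $\lambda^n_{-i}/n$. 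In the proof of Theorem \ref{waiting-convergence}, the author invokes Lemma 2.4 of \cite{daiwill96} (whose hypotheses are met here because $\hatw^n_i$ is stochastically bounded by \eqref{fluid-waiting}--\eqref{iw-thm}, and $H_i$ is nonnegative, nondecreasing and locally Lipschitz by Assumption \ref{patience}, while $\sqrt{n}F^n_i(\cdot/\sqrt{n})\to H_i$ locally uniformly) to conclude $\epsilon_i^n\Go 0$. Multiplying the displayed identity by $\lambda^n_{-i}/n \to \lambda_i=\lambda$ (Assumptions \ref{arrival} and \ref{htc}) and using that $\int_0^\cdot H_i(\hatw^n_i(u))\,\D u$ is stochastically bounded on compact time sets, we obtain
\bes
\int_0^\cdot \sqrt{n}F^n_i\!\left(\frac{\hatw^n_i(u-)}{\sqrt{n}}\right)\D \barn^n_i(u) - \lambda_i\int_0^\cdot H_i(\hatw^n_i(u))\,\D u \Go 0,
\ees
which is \eqref{plan:main}, and the lemma follows.

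The main obstacle is \eqref{plan:main}, where we must simultaneously replace the prelimit distribution function $F^n_i$ by its scaling limit $H_i$ and the random counting measure $\D N^n_i$ by Lebesgue measure $\lambda_i\,\D u$, while the integrand itself involves the stochastic process $\hatw^n_i$. This is precisely where the stochastic boundedness of $\hatq^n$ (hence of $\hatw^n_i$ via Lemma \ref{fluid}) and the local Lipschitz regularity of $H_i$ are essential: they allow the argument to be localized to a compact set on which the convergence $\sqrt{n}F^n_i(\cdot/\sqrt{n})\to H_i$ is uniform and $H_i$ is uniformly continuous, and they ensure the fluctuations in $\barn^n_i-\lambda_i\iota$ contribute negligibly. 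Since this exact argument has already been carried out in the proof of Theorem \ref{waiting-convergence}, the present lemma reduces to an algebraic rearrangement of that result together with the vanishing martingale terms.
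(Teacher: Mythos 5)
Your proposal is correct and follows essentially the same route as the paper: start from the decomposition \eqref{total-abandon}, kill $\hat\xi^n_i$ and $\hatm^n_{i,3}(\barn^n_i(\cdot))$ using Lemmas \ref{martingale-conv-3} and \ref{martingale-conv-2} (with the random time change theorem and Lemma \ref{fluid}), and reduce the remaining integral term to the vanishing of $\epsilon^n_i$ established via Lemma 2.4 of \cite{daiwill96} in the proof of Theorem \ref{waiting-convergence}. You are actually slightly more explicit than the paper about the bookkeeping: you spell out the factor $\lambda^n_{-i}/n\to\lambda_i$ that reconciles $\epsilon^n_i$ with the target display (the paper silently absorbs this, relying on Assumptions \ref{arrival} and \ref{htc}), and you correctly attribute the $\hat\xi^n_i$ term to Lemma \ref{martingale-conv-3}, whereas the paper's proof cites Lemma \ref{martingale-conv-1} for it, which appears to be a typo.

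One small caution on your parenthetical justification for applying Lemma 2.4 of \cite{daiwill96}: stochastic boundedness of $\hatw^n_i$ alone is not quite the hypothesis needed; the paper first passes, via $C$-tightness (Lemma \ref{waiting-tight}) and Skorohod representation, to a subsequence along which $\hatw^{n_l}_i$ converges almost surely uniformly on compacts, and only then applies the lemma to get $\epsilon_i^{n_l}\Go 0$, extending to the full sequence by the usual every-subsequence argument. Since you are explicitly invoking the same step as the paper (``this exact argument has already been carried out''), this is a presentational rather than a substantive gap, but it is worth stating precisely if you intend the reduction to stand on its own.
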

\begin{proof}
From \eqref{total-abandon}, for $t\ge 0$, 
\begin{align*}
&\left|\hatr^n_i(t) - \lambda_i \int_0^t H_i(\hatw^n_i(s))\D s \right| \\
& \le \hat\xi^n_i +  |\hatm^n_{i,3}(t)| +  \left|\int_0^t \sqrt{n} F^n_i(\hatw^n_i(u-)/\sqrt{n})\D \barn^n_i(u) - \lambda_i \int_0^t H_i(\hatw^n_i(s))\D s \right|.
\end{align*}
The result follows from Lemmas \ref{martingale-conv-1}, \ref{martingale-conv-2}, and \eqref{eq:w52}.
\end{proof}

\begin{lemma} \label{abandon-equal}
Assume that $\hatq^n$ is stochastically bounded. Then under Assumptions \ref{arrival}, \ref{htc}, and \ref{patience}, 
\bes
(R^n_1 - G^n_1, \ R^n_{-1} -G^n_{-1}) \Go 0.
\ees
\end{lemma}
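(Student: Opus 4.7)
The natural reading of the statement---consistent with its role in Section \ref{sec:reneging} of giving $\hatr^n_i$ and $\hatg^n_i$ the same weak limit---is the diffusion-scaled claim $(R^n_i-G^n_i)/\sqrt n\Go 0$. I plan to prove this by a pathwise count: the difference $R^n_i(t)-G^n_i(t)$ is the number of class-$i$ customers that have arrived by $t$, are destined to renege, yet have not yet reneged; because the patience time of any such customer is bounded by its offered waiting time, which under the standing hypotheses is of order $1/\sqrt n$, each of these customers must have arrived in a very short window ending at $t$, and so the count is small in the diffusion scaling.

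Concretely, starting from the definitions,
\[
R^n_i(t)-G^n_i(t)=\sum_{k=-Q^n_i(0)+1}^{N^n_i(t)}1_{\{d^n_{i,k}\le w^n_{i,k}\}}1_{\{t^n_{i,k}+d^n_{i,k}>t\}}.
\]
For each contributing $k$, the inequalities $d^n_{i,k}\le w^n_{i,k}\le\|W^n_i\|_T$ (using $w^n_{i,k}=W^n_i(t^n_{i,k}-)$ for $k\ge 1$ and $w^n_{i,k}\le W^n_i(0)$ for $k\le 0$) together with $t^n_{i,k}>t-d^n_{i,k}$ force the arrival time to lie in $((t-\|W^n_i\|_T)^+,t]$. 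Splitting the sum accordingly, the $k\ge 1$ part is dominated by $R^n_i(t)-R^n_i((t-\|W^n_i\|_T)^+)$ and the $k\le 0$ part by $\mathcal{I}^n_i:=\sum_{k=-Q^n_i(0)+1}^{0}1_{\{d^n_{i,k}\le w^n_{i,k}\}}$. Dividing by $\sqrt n$ and taking $\sup_{0\le t\le T}$,
\[
\sup_{0\le t\le T}\frac{R^n_i(t)-G^n_i(t)}{\sqrt n}\le \omega_T(\hatr^n_i,\|W^n_i\|_T)+\frac{\mathcal{I}^n_i}{\sqrt n},
\]
where $\omega_T(x,\delta)$ is the modulus of continuity of $x$ on $[0,T]$ at scale $\delta$.

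The second term equals $\hat\xi^n_i$ (the $<$ versus $\le$ convention costs nothing in the limit since $F^n_i$ is continuous at $0$), and so vanishes by Lemma \ref{martingale-conv-3}. For the first term, my plan is to first upgrade Lemma \ref{abandon-5} to $C$-tightness of $\hatr^n_i$: Lemma \ref{waiting-tight} gives $C$-tightness of $\hatw^n_i$, and the local Lipschitz continuity of $H_i$ from Assumption \ref{patience} makes $\lambda_i\int_0^\cdot H_i(\hatw^n_i(s))\,\D s$ almost surely continuous and $C$-tight, so Lemma \ref{abandon-5} transfers $C$-tightness to $\hatr^n_i$. Since $\|W^n_i\|_T=\|\hatw^n_i\|_T/\sqrt n\to 0$ in probability, the standard random-scale split
\[
\PP(\omega_T(\hatr^n_i,\|W^n_i\|_T)>\eta)\le \PP(\omega_T(\hatr^n_i,\delta)>\eta)+\PP(\|W^n_i\|_T>\delta),
\]
followed by $n\to\infty$ and then $\delta\downarrow 0$, yields $\omega_T(\hatr^n_i,\|W^n_i\|_T)\Go 0$.

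The main technical point is this last step, the random-scale modulus: we need both that the scale $\|W^n_i\|_T$ collapses to zero \emph{and} that $\hatr^n_i$ has asymptotically continuous sample paths, and both rest on the stochastic boundedness of $\hatq^n$ feeding through Theorem \ref{relation-thm} and Lemma \ref{abandon-5}. The initial-boundary contribution $\mathcal{I}^n_i/\sqrt n$ is essentially free once we identify it with $\hat\xi^n_i$, and the time-window bound in the second paragraph is the only genuinely new pathwise observation required.
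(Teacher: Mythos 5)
Your proof is correct, and it follows the same underlying idea as the paper's---the difference $R^n_i - G^n_i$ counts reneging customers who arrived within the last $\|W^n_i\|_T$ time units, and this is then controlled via small increments of $\hatr^n_i$ and the collapse $\|W^n_i\|_T = \|\hatw^n_i\|_T/\sqrt{n}\to 0$---but the packaging differs in two respects. The paper introduces a hitting time $\tau^n_i(t)=\inf\{s\ge 0: s+W^n_i(s)>t\}$, proves a preparatory monotonicity lemma (that $w^n_{i,k}+t^n_{i,k}$ is nondecreasing in the arrival order), and derives the pathwise sandwich $R^n_i((\tau^n_i(t)-1/n)^+)\le G^n_i(t)\le R^n_i(t)$, then bounds the increment $\hatr^n_i(t)-\hatr^n_i((\tau^n_i(t)-1/n)^+)$ by inserting the approximation from Lemma \ref{abandon-5}. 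You instead work directly with the identity $R^n_i(t)-G^n_i(t)=\sum_k 1_{\{d^n_{i,k}\le w^n_{i,k}\}}1_{\{t^n_{i,k}+d^n_{i,k}>t\}}$, read off the arrival-window constraint $t^n_{i,k}\in((t-\|W^n_i\|_T)^+,t]$ for $k\ge1$, bound the $k\le 0$ terms by $\hat\xi^n_i$ (which you correctly check, via the combinatorial bound $R^n_1(0)-R^n_{1,k}(0)\le Q^n_1(0)+k$, dominates all initial offered waits by $W^n_i(0)$), and finish with a modulus-of-continuity argument once $C$-tightness of $\hatr^n_i$ is extracted from Lemma \ref{abandon-5} together with the $C$-tightness of $\hatw^n_i$ from Lemma \ref{waiting-tight}. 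Your route is somewhat more direct: it needs neither the monotonicity of $w^n_{i,k}+t^n_{i,k}$ nor the hitting time, and it sidesteps the $(\tau^n_i(t)-1/n)^+=0$ edge case that the paper's sandwich glosses over; what you give up is that you must separately establish $C$-tightness of $\hatr^n_i$ (a one-line extraction) rather than bounding the increment directly through the integral representation, and the two end-term estimates are comparable in effort.
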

\begin{proof} The proof idea is similar to the proof of Proposition 4.1 in \cite{daihe2010}. We first show that 
\be\label{finish}
w^n_{i,k}+t^n_{i,k}\le w^n_{i,l}+t^n_{i,l},
\ee 
when $1\le k\le l$, or $l\le k \le 0$, or $k = -Q^n(0)+1, l=1$. 
Recall that $t^n_{i,k}=0$ when $k\le 0$. From \eqref{waiting-negative}, it is clear that $w^n_{1,k}\le w^n_{1,l}$ for $l\le k \le 0$, and so \eqref{finish} holds for $l\le k \le 0$. For $k=-Q^n(0)+1$ and $l=1$, we have 
\begin{align*}
w^n_{1,-Q^n(0)+1} & = t^n_{-1, Q^n_1(0)  - R_{1,-Q^n(0)+1}(0)}, \\
w^n_{1,1} & = \left(t^n_{-1, 1+Q^n_1(0)-R^n_1(t^n_{1,1}-)+R^n_{-1}(t^n_{1,1}-)} - t^n_{1,1}\right)^+.
\end{align*}
We note that $R^n_{1,-Q^n(0)+1} +1 \ge R^n_1(t^n_{1,1}-)$, and so 
\bes
t^n_{-1, Q^n_1(0)  - R_{1,-Q^n(0)+1}(0)} \le t^n_{-1, 1+Q^n_1(0)-R^n_1(t^n_{1,1}-)+R^n_{-1}(t^n_{1,1}-)}. 
\ees
Thus it is clear that \eqref{finish} holds for $k=-Q^n(0)+1$ and $l=1$. Consider now $k\in \NN$. If $w^n_{i,k}=0$, then \eqref{finish} holds clearly. Suppose now that $w^n_{i,k}>0.$ Then 
\bes
w^n_{i,k} + t^n_{i,k} = t^n_{-i, k+Q^n_i(0)-R^n_i(t^n_{i,k}-)-Q^n_{-i}(0)+R^n_{-i}(t^n_{i,k}-)}. 
\ees
Noting that $R^n_i(t^n_{i,l}-) - R^n_i(t^n_{i,k}-) \le l-k$, we have 
\bes
 t^n_{-i, k+Q^n_i(0)-R^n_i(t^n_{i,k}-)-Q^n_{-i}(0)+R^n_{-i}(t^n_{i,k}-)} \le t^n_{-i, l + Q^n_i(0)-R^n_i(t^n_{i,l}-)-Q^n_{-i}(0)+R^n_{-i}(t^n_{i,l}-)}.
\ees
If $w^n_{i,l} >0$, then 
\bes
w^n_{i,l} + t^n_{i,l} = t^n_{-i, l + Q^n_i(0)-R^n_i(t^n_{i,l}-)-Q^n_{-i}(0)+R^n_{-i}(t^n_{i,l}-)},
\ees
and if $w^n_{i,l}=0$, then from Lemma \ref{reformulate},
\bes
w^n_{i,l} + t^n_{i,l} =  t^n_{i,l} \ge t^n_{-i, l + Q^n_i(0)-R^n_i(t^n_{i,l}-)-Q^n_{-i}(0)+R^n_{-i}(t^n_{i,l}-)}.
\ees
Thus \eqref{finish} holds for $1\le k\le l$. For $t\ge 0$, define
\bes
\tau^n_i(t) = \inf\{s\ge 0: s + W^n_i(s) > t\}.
\ees
Observing that $w^n_{i,k}+t^n_{i,k} = W^n_i(t^n_{i,k}-)+t^n_{i,k} \le t$ for all $t^n_{i,k}<\tau^n_i(t)$, each customer arriving before time $\tau^n_i(t)$ should have left the system by time $t$. This says $R^n_i((\tau^n_i(t)-1/n)^+) \le G^n(t)$ for $t\ge 0$. So we have 
\bes
R^n_i((\tau^n_i(t)-1/n)^+) \le G^n_i(t) \le R^n_i(t), \ t\ge 0.
\ees
Next for $t\ge 0$, we note that $\tau^n_i(t)\le t$. We then have for $T>0,$
\begin{align}
&\sup_{0\le t\le T} \left| \hatr^n_i(t) -\hatr^n_i((\tau^n_i(t)-1/n)^+)\right| \nonumber \\
& \le \sup_{0\le t\le T} \left|\hatr^n_i(\tau^n_i(t) + W^n_i(\tau^n_i(t))) -\hatr^n_i((\tau^n_i(t)-1/n)^+)\right| \nonumber\\
& \le  \sup_{0\le t\le T} \left|\hatr^n_i(\tau^n_i(t) + W^n_i(\tau^n_i(t)))  - \lambda_i\int_0^{\tau^n_i(t) + W^n_i(\tau^n_i(t))} H_i(\hatw^n_i(s))\D s \right| \nonumber \\
& \quad + \sup_{0\le t\le T} \left|\hatr^n_i((\tau^n_i(t)-1/n)^+)  - \lambda_i\int_0^{(\tau^n_i(t)-1/n)^+} H_i(\hatw^n_i(s))\D s \right|  \\
& \quad + \lambda_i  \sup_{0\le t\le T}\int_{(\tau^n_i(t)-1/n)^+}^{\tau^n_i(t) + W^n_i(\tau^n_i(t))} H_i(\hatw^n_i(s))\D s \nonumber\\
& \le 2 \sup_{0\le s \le T+ \sup_{0\le u \le T} W^n_i(u)} \left|\hatr^n_i(s) - \lambda_i \int_0^s H_i(\hatw^n_i(v))\D v \right| \label{eq:r51} \\
& \quad  + \lambda_i \sup_{0\le s \le T+ \sup_{0\le u \le T} W^n_i(u)} H_i(\hatw^n_i(s)) \cdot \sup_{0\le s \le T} \left[ s+ W^n_i(s) - (s-1/n)^+\right]. \label{eq:r52}
\end{align}
From Lemmas \ref{abandon-5} and \ref{fluid}, the term in \eqref{eq:r51} converges to $0$, and Lemmas \ref{fluid} and \ref{waiting-tight} yields that the term in \eqref{eq:r52} converges to $0$. The lemma follows. 
\end{proof}

\begin{proof}[Proof of Theorem \ref{relation-thm}] Recall that $Q^{n}(0)\ge 0$. We then note that for $t\ge 0$, from \eqref{vw-fluid} and \eqref{total-abandon},  
\begin{align*}
\hatw^n_1(t) & = \left[ \frac{n}{\lambda^n_{-1}} \hatq^{n,+}(0) + \frac{\sqrt{n}}{\lambda^n_{-1}}  + \hatm^n_{1,1} + \hatm^n_{1,2} - \frac{n}{\lambda^n_{-1}}  \hatr^n_1 + \frac{n}{\lambda^n_{-1}}  \hatr^n_{-1}\right]^+, \\
\hatw^n_{-1}(t) & = \left[ -\frac{n}{\lambda^n_{1}} \hatq^{n,+}(0) + \frac{\sqrt{n}}{\lambda^n_{1}} + \hatm^n_{-1,1} + \hatm^n_{-1,2} - \frac{n}{\lambda^n_{1}}  \hatr^n_{-1} + \frac{n}{\lambda^n_{1}}  \hatr^n_{1}\right]^+,
\end{align*}
and from \eqref{queue-length},
\begin{align*}
\frac{\hatq^{n,+}(t)}{\lambda} & = \left[\frac{\hatq^n(0)}{\lambda} + \frac{\hatn^n_{1}(t)}{\lambda} -  \frac{\hatn^n_{-1}(t)}{\lambda} + \frac{(\lambda_1^n - \lambda_{-1}^n)t}{\lambda \sqrt{n}} -  \frac{\hatg^n_1(t)}{\lambda} +  \frac{\hatg^n_{-1}(t)}{\lambda}\right]^+,\\
\frac{\hatq^{n,-}(t)}{\lambda} & = \left[\frac{\hatq^n(0)}{\lambda} + \frac{\hatn^n_{1}(t)}{\lambda} -  \frac{\hatn^n_{-1}(t)}{\lambda} + \frac{(\lambda_1^n - \lambda_{-1}^n)t}{\lambda \sqrt{n}} -  \frac{\hatg^n_1(t)}{\lambda} +  \frac{\hatg^n_{-1}(t)}{\lambda}\right]^-.
\end{align*}
Recalling from Lemma \ref{martingale-conv-1}, we have 
\begin{equation}\label{eq:thm1}
\begin{aligned}
\left(\hatm^n_{1,1}, \hatm^n_{1,2}, \hatm^n_{-1,1}, \hatm^n_{-1,2}, \frac{\hatn^n_1}{\lambda}, \frac{\hatn^n_{-1}}{\lambda}\right) & \Go \left(0, \frac{c}{\lambda}, 0, -\frac{c}{\lambda}, 0, 0\right)\iota \\
& \quad + (Y_{-1}, - Y_1, Y_1, - Y_{-1}, -Y_1, -Y_{-1}), 
\end{aligned}
\end{equation}
where $\iota: [0,\infty)\to [0,\infty)$ is the identity map, and $Y_1(t) = \sigma_1 B_1(\lambda t)$ and $Y_{-1}(t) = \sigma_{-1} B_{-1}(\lambda t)$ with $B_1$ and $B_{-1}$ being two independent standard Brownian motions.
Then for $t\ge 0$,
\begin{align}
&\left|\hatw^n_1(t) - \frac{\hatq^{n,+}(t)}{\lambda} \right| + \left|\hatw^n_{-1}(t) - \frac{\hatq^{n,-}(t)}{\lambda} \right| \label{queue-wait-diff}\\
& \le \left| \left(\frac{n}{\lambda^n_{-1}} -\frac{1}{\lambda} \right)\hatq^{n}(0) + \frac{\sqrt{n}}{\lambda^n_{-1}}  +  \left(\hatm^n_{1,1} +\frac{\hatn^n_{-1}(t)}{\lambda} \right) + \left( \hatm^n_{1,2} - \frac{\hatn^n_{1}(t)}{\lambda} - \frac{(\lambda_1^n - \lambda_{-1}^n)t}{\lambda \sqrt{n}}\right) \right.  \nonumber\\
& \quad \left. + \left(- \frac{n}{\lambda^n_{-1}}  \hatr^n_1 +   \frac{\hatg^n_1(t)}{\lambda} \right) + \left( \frac{n}{\lambda^n_{-1}}  \hatr^n_{-1}  -  \frac{\hatg^n_{-1}(t)}{\lambda} \right) \right|  \nonumber\\
& \quad + \left| - \left(\frac{n}{\lambda^n_{1}} -\frac{1}{\lambda} \right)\hatq^{n}(0) + \frac{\sqrt{n}}{\lambda^n_{1}}  +  \left(\hatm^n_{-1,1} +\frac{\hatn^n_{1}(t)}{\lambda} \right) + \left( \hatm^n_{-1,2} - \frac{\hatn^n_{-1}(t)}{\lambda} + \frac{(\lambda_1^n - \lambda_{-1}^n)t}{\lambda \sqrt{n}}\right) \right.  \nonumber\\
& \quad \left. + \left(- \frac{n}{\lambda^n_{1}}  \hatr^n_{-1} +   \frac{\hatg^n_{-1}(t)}{\lambda} \right) + \left( \frac{n}{\lambda^n_{1}}  \hatr^n_{1}  -  \frac{\hatg^n_{1}(t)}{\lambda} \right) \right|. \nonumber
\end{align}
Combining \eqref{eq:thm1} and Lemma \ref{abandon-equal}, we have \eqref{queue-wait-diff} converges weakly to $0$.
\end{proof}

\begin{proof}[Proof of Theorem \ref{diffusion-thm}] From Theorem \ref{relation-thm}, 
\bes
\hatq^n - \lambda(\hatw^n_1 - \hatw^n_{-1}) \Go 0.
\ees
Let $(W_1, W_{-1})$ denote the weak limit of $(\hatw^n_1, \hatw^n_{-1})$ and $X$ be defined in Theorem \ref{waiting-convergence}. Then from Theorem \ref{waiting-convergence}, we see that 
\be\label{queue-limit}
\hatq^n \Go \lambda(\Psi_1(X) - \Psi_{-1}(X)) = \lambda X(t) - \lambda \int_0^\cdot H_1\left( W_1(s) \right) ds + \lambda \int_0^\cdot H_{-1}(W_{-1}(s))ds,
\ee 
Denote by $Q$ the weak limit of $\hatq^n$ in \eqref{queue-limit}. Then from Theorem \ref{relation-thm}, we have 
\bes
Q^+ = \lambda W_1, \ \mbox{and}  \ \ Q^- = \lambda W_{-1}. 
\ees
The result follows.
\end{proof}

\begin{proof}[Proof of Theorem \ref{stability}]
We first obtain the generator of $Q$ as follows. For $x\in \RR$ and $f\in C^2_0(\RR)$,
\[
\mathcal{A}f(x) = \frac{\lambda^3(\sigma_1^2+\sigma_{-1}^2) f''(x)}{2} + \left[c - \lambda H_1\left(\frac{x}{\lambda}\right) 1_{\{x\ge 0\}}  + \lambda H_{-1}\left(\frac{x}{\lambda}\right) 1_{\{x<0\}} \right] f'(x).
\]
From Proposition 9.2 in \cite{ek86}, it suffices to verify that $\int_\RR \mathcal{A} f(x)\pi(dx) =0$ for all $f\in C^2_0(\RR)$. Indeed, using integration by parts, we have 
\begin{align*} 
& \int_{[0,\infty)} \mathcal{A} f(x)\pi(dx) \\
& = C_0 \int_{[0,\infty)}\frac{\lambda^3(\sigma_1^2+\sigma_{-1}^2) f''(x)}{2} \exp\left\{- \frac{2}{\lambda^3(\sigma^2_1+\sigma^2_{-1})} \left(-cx + \lambda^2 \int_0^{\frac{x}{\lambda}} H_1(u) \D u  \right) \right\} \D x\\
& \quad + C_0 \int_{[0,\infty)}\left[c - \lambda H_1\left(\frac{x}{\lambda}\right)  \right] f'(x) \exp\left\{- \frac{2}{\lambda^3(\sigma^2_1+\sigma^2_{-1})} \left(-cx + \lambda^2 \int_0^{\frac{x}{\lambda}} H_1(u) \D u  \right) \right\} \D x \\
& = - \frac{\lambda^3(\sigma_1^2+\sigma_{-1}^2) f'(0)}{2} C_0,
\end{align*}
and 
\begin{align*} 
& \int_{(-\infty, 0)} \mathcal{A} f(x)\pi(dx) \\
& = C_0 \int_{(-\infty, 0)}\frac{\lambda^3(\sigma_1^2+\sigma_{-1}^2) f''(x)}{2} \exp\left\{- \frac{2}{\lambda^3(\sigma^2_1+\sigma^2_{-1})} \left(-cx + \lambda^2 \int_0^{-\frac{x}{\lambda}} H_{-1}(u) \D u  \right) \right\} \D x\\
& \quad + C_0 \int_{(-\infty,0)}\left[c + \lambda H_{-1}\left(\frac{x}{\lambda}\right)  \right] f'(x) \exp\left\{- \frac{2}{\lambda^3(\sigma^2_1+\sigma^2_{-1})} \left(-cx + \lambda^2 \int_0^{-\frac{x}{\lambda}} H_{-1}(u) \D u  \right) \right\} \D x \\
& = \frac{\lambda^3(\sigma_1^2+\sigma_{-1}^2) f'(0)}{2} C_0.
\end{align*}
Thus $\pi$ is a stationary distribution of $Q$. Finally, the uniqueness of $\pi$ follows from the irreducibility of $Q$. 

\end{proof}

\subsection{Proof of Corollary \ref{cor} }\label{coro:proof}
The goal is to prove $\hatq^n$ is $C$-tight under Assumption \ref{arrival}, \ref{htc}, and \ref{hazard}. We first follow \cite{kang10} to construct the compensator for the abandonment process $G^n_i$. Define the potential waiting time process for the customers of Class $i$ as follows: For $k\in \NN,$
\bes
\tilde w^n_{i,k}(t) = \begin{cases} \max\{t - t^n_{i,k}, 0\}, & \mbox{if} \ t - t^n_{i,k} < d^n_{i,k}, \\
d^n_{i,k}, & \mbox{otherwise}.
\end{cases}
\ees
We note that for $k\in\NN$, $\tilde w^n_{i,k}(t)$ represents the amount of time spent by the $k^{th}$ customer of Class $i$ since entering the system, and remains constant at $d^n_{i,k}$ once the time spent reaches the patience time, and for $k\in -\NN\cup\{0\}$, $\tilde w^n_{i,k}$ represents the potential waiting time process of the $(-k+1)^{st}$ customer who enters the system before time $0$ (if such customer exists). The following measure, which is called the potential queue measure, assigns a unit mass to the potential waiting time of each customer of Class $i$ that has entered the system by time $t$ and whose potential waiting time has not yet reached the patience time. 
\bes
\eta^n_{i,t}(\D x) = \sum_{k=-Q^{n}_i(0)+1}^{N^n_i(t)} \delta_{\tilde w^n_{i,k}}(\D x) 1_{\{\tilde w^n_{i,k}(t)<d^n_{i,k}\}} = \sum_{k=-Q^{n}_i(0)+1}^{N^n_i(t)} \delta_{\tilde w^n_{i,k}}(\D x) 1_{\left\{\frac{\D \tilde w^n_{i,k}(t+)}{\D t}>0\right\}}.
\ees
The number of Class $i$ customers at time $t$ can then be formulated as follows:
\bes
Q^{n}_i(t) =  \sum_{k=-Q^{n}_i(0)+1}^{N^n_i(t)} 1_{\{\tilde w^n_{i,k}(t) \le \chi^n_i(t), \ \tilde w^n_{i,k}(t)<d^n_{i,k}\}} = \eta^n_{i,t}[0,\chi^n_i(t)],
\ees
where $\chi^n_i(t) = \inf\{x>0: \eta^n_{i,t}[0,x] \ge Q^{n}_i(t)\}$ which is the waiting time of the head-of-the-line Class $i$ customer in the queue at time $t.$ The abandonment process of Class $i$ at time $t$ becomes
\bes
G^n_i(t) = \sum_{k=-Q^{n}_i(0)+1}^{N^n_i(t)} \sum_{s\in[0,t]} 1_{\left\{\tilde w^n_{i,k}(s) \le \chi^n_i(s-), \ \frac{\D \tilde w^n_{i,k}(s-)}{\D t}>0, \ \frac{\D \tilde w^n_{i,k}(s+)}{\D t}=0\right\}}.
\ees
Recall from Assumption \ref{hazard} that for $i=\pm 1$, the patience time distribution function $F^n_i$ has density function $f^n_i$ and hazard rate function $h^n_i  = f^n_i/(1-F^n_i)$ on $[0, M^n_i)$, where $M^n_i = \sup\{s\ge 0: F^n(s) < 1\}$. Furthermore, for $t\in [0, M^n_i)$, $\sup_{n\in\NN}\|h^n_i\|_t < \infty$, and $h^n_i(t/\sqrt{n})\to h_i(t)$, where $h_i$ is a nonnegative measurable function. 
Define for $t\ge 0,$
\bes
A^n_i(t) = \int_0^t \left( \int_{[0,M_i^n)} 1_{[0,\chi^n_i(s-)]}(x) h_i^n(x) \eta^n_{i,s}(\D x)\right) \D s, 
\ees
and
\begin{align*}
\clg^n_{i,t} & = \sigma\left\{Q^n(0); (N^n_i(s), N^n_{-i}(s)), 0\le s \le t;  \right. \nonumber\\
\begin{split}
& \quad\quad\quad  (w^n_{i,l}, d^n_{i,l}), l =-Q^n_i(0)+1, \ldots, 0, 1,\ldots, N_i(t); \\
& \quad\quad\quad  (w^n_{-i,l}, d^n_{-i,l}), l =-Q^n_{-i}(0)+1, \ldots, 0,  1, \ldots, N^n_{-i}(t);
\end{split}\label{filtration}\\
& \quad\quad\quad \left. t^n_{-i,l}, l = 1, 2, \ldots, N^n_i(t)+Q_i^n(0) -R_i^n(t) - Q^n_{-i}(0) + R^n_{-i}(t) \right\}. \nonumber
\end{align*}

\begin{lemma}\label{martingale}
For $n\in\NN,$ the process $A^n_i$ is the $\{\clg^n_{i,t}\}$ compensator of the process $G^n_i$, and in particular, $G^n_i - A^n_i$ is a $\{\clg^n_{i,t}\}$ local  martingale. 
\end{lemma}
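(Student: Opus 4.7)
The plan is to adapt the standard hazard-rate compensator argument for reneging queues (cf.\ \cite{kang10} for many-server models) to the double-ended setting: decompose $G^n_i$ and $A^n_i$ customer by customer, identify each individual difference as a local martingale via the hazard-rate representation of $F^n_i$, and then aggregate.

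Using the atomic (point-mass) structure of the potential queue measure $\eta^n_{i,s}$, we may rewrite
\[
A^n_i(t) = \sum_{k=-Q^n_i(0)+1}^{N^n_i(t)} A^n_{i,k}(t), \qquad A^n_{i,k}(t) := \int_0^t h^n_i(\tilde w^n_{i,k}(s))\,1_{\{\tilde w^n_{i,k}(s)\le \chi^n_i(s-)\}}\,1_{\{\tilde w^n_{i,k}(s)<d^n_{i,k}\}}\,ds,
\]
and $G^n_i(t) = \sum_{k=-Q^n_i(0)+1}^{N^n_i(t)} G^n_{i,k}(t)$, where $G^n_{i,k}(t)$ is the indicator that customer $k$ of class $i$ has abandoned by time $t$. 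It then suffices to show that each $M^n_{i,k} := G^n_{i,k} - A^n_{i,k}$ is a $\{\clg^n_{i,t}\}$-local martingale, and to combine these contributions.

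For the single-customer step, the key ingredients are: (a) $d^n_{i,k}$ is independent of the history prior to customer $k$'s arrival (an extension of Lemma \ref{filt} to the richer filtration $\{\clg^n_{i,t}\}$, following from the assumed independence of the primitives); (b) the hazard-rate representation $1 - F^n_i(x) = \exp\bigl(-\int_0^x h^n_i(u)\,du\bigr)$ provided by Assumption \ref{hazard}; and (c) the left-continuity and $\{\clg^n_{i,s}\}$-adaptedness (hence $\{\clg^n_{i,s}\}$-predictability) of the in-queue indicator $1_{\{\tilde w^n_{i,k}(s)\le \chi^n_i(s-),\ \tilde w^n_{i,k}(s)<d^n_{i,k}\}}$. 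Combined with the classical Doob--Meyer decomposition for a single-jump counting process whose jump time has predictable intensity equal to the hazard rate times this in-queue indicator, these yield the martingale property of $M^n_{i,k}$. Aggregation over $k$ is harmless because at any finite $T$ only finitely many customers contribute, and $\sup_{n}\|h^n_i\|_T<\infty$ by Assumption \ref{hazard}, so $A^n_i(T)<\infty$ almost surely; localization by a sequence of stopping times bounding the cumulative number of arrivals converts the (finite) sum of martingales into a true martingale on each localized interval.

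The main obstacle is ingredient (c), the verification that $\chi^n_i(s-)$ and the potential waiting times $\tilde w^n_{i,k}(s)$ are $\{\clg^n_{i,s-}\}$-measurable. This is delicate because $\chi^n_i$ is determined by the matching dynamics, which depend on arrivals, prior matches, and prior abandonments of \emph{both} classes. Fortunately, $\clg^n_{i,t}$ is defined richly enough -- containing the joint arrival processes on $[0,t]$, the offered waiting times, the patience data of previously arrived customers of both classes, and the relevant opposite-class arrival times -- so all primitive inputs needed to reconstruct $\chi^n_i$ and $\tilde w^n_{i,k}$ are adapted, and ingredient (c) follows from this bookkeeping.
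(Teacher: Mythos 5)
Your overall strategy (customer-by-customer decomposition, hazard-rate representation of $F^n_i$, martingale aggregation) is exactly the content of the argument the paper invokes by citing Lemma~5.4 of \cite{kang10}, so the plan is the right one, and ingredients (b), (c), and the aggregation step are correctly identified. The gap is in ingredient (a), which, as you state it, is not the property the single-customer compensator step actually needs. Independence of $d^n_{i,k}$ from the pre-arrival $\sigma$-field is only half the story: the argument requires that \emph{after} customer $k$ enters, the filtration sees $d^n_{i,k}$ only through the censored observation $\{d^n_{i,k}>\tilde w^n_{i,k}(s)\}$, so that on the event the customer is still in queue at time $s$, the conditional law of $d^n_{i,k}$ given $\clg^n_{i,s}$ is $F^n_i$ restricted to $(\tilde w^n_{i,k}(s),\infty)$ and the abandonment intensity at $s$ is $h^n_i(\tilde w^n_{i,k}(s))$. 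This must be checked against the filtration, not deduced from ``independence of the primitives'': if the filtration observed $d^n_{i,k}$ exactly at time $t^n_{i,k}$ (and the displayed definition of $\clg^n_{i,t}$, which lists $d^n_{i,l}$ for all $l\le N^n_i(t)$, seems to do exactly that if read literally), then $t^n_{i,k}+d^n_{i,k}$ would be a predictable stopping time, $G^n_{i,k}$ would be its own compensator, and $G^n_{i,k}-A^n_{i,k}$ would \emph{not} be a local martingale. Note that there is already tension between your (a) and your (c): (c) implicitly relies on the filtration containing exactly the survival information $\{\tilde w^n_{i,k}(s)<d^n_{i,k}\}$, while (a) addresses only what happens before arrival. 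You should replace (a) with the stronger conditional-distribution statement above and verify it for the filtration actually used — equivalently, work with the age-based filtration of \cite{kang10}, in which patience is revealed progressively rather than at arrival — and then the remaining steps go through as you describe.
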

\begin{proof}
First it can be seen that $(Q^n, G^n_i, A^n_i)\in \{\clg^n_{i,t}\}$. Then the rest of the proof is essentially the same as that of Lemma 5.4 in \cite{kang10}.\end{proof}

\begin{lemma}\label{compensator}
For $t\ge 0,$ 
\bes
A^n_i(t) =   \sum_{k=-Q^n_i(0)+1}^{N^n_i(t)} \int_0^{(t-t_{i,k}^n)\wedge w^n_{i,k} \wedge d^n_{i,k}} h^n_i(u) \D u.
\ees
\end{lemma}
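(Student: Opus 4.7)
The plan is to expand the definition of $A^n_i(t)$, interchange summation and integration, and then for each fixed customer $k$ identify the precise subinterval of $[t^n_{i,k},t]$ on which the integrand is nonzero. Substituting the definition of $\eta^n_{i,s}$ into the definition of $A^n_i$, one obtains
\bes
A^n_i(t) = \int_0^t \sum_{k=-Q^n_i(0)+1}^{N^n_i(s)} 1_{[0,\chi^n_i(s-)]}\bigl(\tilde w^n_{i,k}(s)\bigr)\, 1_{\{\tilde w^n_{i,k}(s)<d^n_{i,k}\}}\, h^n_i\bigl(\tilde w^n_{i,k}(s)\bigr) \D s,
\ees
and by Fubini this equals
\bes
\sum_{k=-Q^n_i(0)+1}^{N^n_i(t)} \int_{t^n_{i,k}}^{t} 1_{[0,\chi^n_i(s-)]}\bigl(\tilde w^n_{i,k}(s)\bigr)\, 1_{\{\tilde w^n_{i,k}(s)<d^n_{i,k}\}}\, h^n_i\bigl(\tilde w^n_{i,k}(s)\bigr) \D s,
\ees
using that $k\le N^n_i(s)$ iff $s\ge t^n_{i,k}$, and that for $k\le 0$ we have $t^n_{i,k}=0$.

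The next step is the main observation: for each $k$, the integrand above is nonzero precisely on the (half-open) interval during which customer $k$ is still waiting in queue, namely $[t^n_{i,k},\, t^n_{i,k} + w^n_{i,k}\wedge d^n_{i,k})$. To prove this, I would argue as follows. On $[t^n_{i,k},\,t^n_{i,k}+d^n_{i,k})$ we have $\tilde w^n_{i,k}(s)=s-t^n_{i,k}$ and $1_{\{\tilde w^n_{i,k}(s)<d^n_{i,k}\}}=1$; on $[t^n_{i,k}+d^n_{i,k},\infty)$ the latter indicator vanishes. Thus it suffices to show that on $[t^n_{i,k},\,t^n_{i,k}+d^n_{i,k})$ the indicator $1_{[0,\chi^n_i(s-)]}(s-t^n_{i,k})$ is $1$ precisely while customer $k$ is in queue at time $s-$, i.e.\ on $[t^n_{i,k},\, t^n_{i,k} + w^n_{i,k}\wedge d^n_{i,k})$.

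This comes down to two claims about the head-of-line waiting time $\chi^n_i(s-)$. First, if customer $k$ is in queue at time $s-$, then $\chi^n_i(s-)\ge s-t^n_{i,k}=\tilde w^n_{i,k}(s)$; this follows because $\chi^n_i(s-)$ equals the potential waiting time of the oldest Class~$i$ customer still in queue at $s-$, who under FCFS arrived no later than $k$. Second, if $k$ has already departed (matched or abandoned) by time $s-$ but is still recorded in $\eta^n_{i,s}$ (so $s-t^n_{i,k}<d^n_{i,k}$, which forces $k$ to have been matched rather than to have abandoned), then every Class~$i$ customer still in queue at $s-$ arrived strictly later than $k$ (again by FCFS), so each such customer has waiting time strictly smaller than $s-t^n_{i,k}=\tilde w^n_{i,k}(s)$, and consequently $\chi^n_i(s-)<\tilde w^n_{i,k}(s)$ (with $\chi^n_i(s-)=0$ interpreted when the queue is empty). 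The main obstacle will be writing these two comparisons carefully, keeping track of the case $Q^n_i(s-)=0$ and of the convention at the boundary $\tilde w^n_{i,k}(s)=d^n_{i,k}$; once granted, the claim on the support of the integrand follows.

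Given this support identification, on the interval $[t^n_{i,k},\, t^n_{i,k}+w^n_{i,k}\wedge d^n_{i,k})$ the integrand reduces to $h^n_i(s-t^n_{i,k})$, so
\bes
\int_{t^n_{i,k}}^{t} 1_{[0,\chi^n_i(s-)]}\bigl(\tilde w^n_{i,k}(s)\bigr)\, 1_{\{\tilde w^n_{i,k}(s)<d^n_{i,k}\}}\, h^n_i\bigl(\tilde w^n_{i,k}(s)\bigr) \D s
= \int_{t^n_{i,k}}^{t\wedge(t^n_{i,k}+w^n_{i,k}\wedge d^n_{i,k})} h^n_i(s-t^n_{i,k})\D s.
\ees
The change of variables $u=s-t^n_{i,k}$ converts this into $\int_0^{(t-t^n_{i,k})\wedge w^n_{i,k}\wedge d^n_{i,k}} h^n_i(u)\D u$, and summing over $k$ yields the claimed identity.
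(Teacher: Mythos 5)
Your proof is correct and takes essentially the same route as the paper: expand $A^n_i(t)$ via the definition of $\eta^n_{i,s}$, swap $\sum_k$ with $\int_0^t$, identify the support of the $k$-th term as the in-queue interval $[t^n_{i,k},\,t^n_{i,k}+w^n_{i,k}\wedge d^n_{i,k})$, and change variables. The paper states the corresponding measure identity (that $1_{[0,\chi^n_i(s-)]}(\cdot)\,\eta^n_{i,s}(\D x)$ reduces to a sum of Diracs at $s-t^n_{i,k}$ with coefficients $1_{\{s-t^n_{i,k}<w^n_{i,k}\wedge d^n_{i,k}\}}$) without justification, citing adaptation from Reed and Tezcan, whereas you sketch the FCFS argument behind it, which is a harmless elaboration of the same step.
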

\begin{proof} The following proof is adapted from \cite{ReedTezcan12}. We first observe that for $t\ge 0$, 
\bes
1_{[0,\chi^n_i(s-)]}(x) \eta^n_{i,s}(dx) =  \sum_{k=-Q^n_i(0)+1}^{N^n_i(s)}  1_{\{s-t^n_{i,k}<w^n_{i,k}\}}(x) 1_{\{s-t^n_{i,k}<d^n_{i,k}\}}(x) \delta_{s-t^n_{i,k}}(\D x),
\ees
and so
\bes
 \int_{[0,M_i^n)} 1_{[0,\chi^n_i(s-)]}(x) h_i^n(x) \eta^n_{i,s}(dx) = \sum_{k=-Q^n_i(0)+1}^{N^n_i(s)}   h^n_i(s-t^n_{i,k}) 1_{\{s-t^n_{i,k}<w^n_{i,k}\wedge d^n_{i,k}\}}. 
\ees
Finally, we have for $t\ge 0,$
\begin{align*}
A^n_i(t) & = \int_0^t \sum_{k=-Q^n_i(0)+1}^{N^n_i(s)}   h^n_i(s-t^n_{i,k}) 1_{\{s-t^n_{i,k}<w^n_{i,k}\wedge d^n_{i,k}\}} \D s \\
& = \sum_{k=-Q^n_i(0)+1}^{N^n_i(t)} \int_{t^n_{i,k}}^t   h^n_i(s-t^n_{i,k}) 1_{\{s-t^n_{i,k}<w^n_{i,k}\wedge d^n_{i,k}\}} \D s \\
& = \sum_{k=-Q^n_i(0)+1}^{N^n_i(t)} \int_0^{(t-t^n_{i,k})\wedge w^n_{i,k}\wedge d^n_{i,k}} h^n_i(u)\D u.
\end{align*}
\end{proof}

\begin{lemma}\label{queue-tightness}
Assume $\hatq^n(0)$ converges weakly to some random variable $q$. Then $\hatq^n$ is $C$-tight. 
\end{lemma}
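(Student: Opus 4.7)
The plan is to combine the martingale--compensator decomposition from Lemmas \ref{martingale}--\ref{compensator} with a localisation and Gronwall argument. Writing $G^n_i = A^n_i + M^n_i$ with $M^n_i$ the local martingale of Lemma \ref{martingale}, the queue length equation \eqref{queue-length} rescales to
\begin{equation*}
\hatq^n(t) = \hatq^n(0) + \hatn^n_1(t) - \hatn^n_{-1}(t) + \tfrac{\lambda^n_1 - \lambda^n_{-1}}{\sqrt{n}}\,t - \tfrac{A^n_1(t) - A^n_{-1}(t)}{\sqrt{n}} - \tfrac{M^n_1(t) - M^n_{-1}(t)}{\sqrt{n}}.
\end{equation*}
By the renewal FCLT (Theorem 1 of \cite{iw71}) and Assumption \ref{htc}, the first four summands form a $C$-tight sequence converging to a continuous Gaussian process with linear drift. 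The remaining task is therefore to control the compensator contributions $A^n_i/\sqrt{n}$ and the martingale contributions $M^n_i/\sqrt{n}$.

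Next I would localise via $\tau^n_K := \inf\{t \ge 0 : |\hatq^n(t)| \ge K\}$ and argue on the event $\{t < \tau^n_K\}$. The bound \eqref{fluid-waiting} together with the renewal FCLT \eqref{iw-thm} (both of which are valid without assuming stochastic boundedness of $\hatq^n$) gives
\begin{equation*}
\sqrt{n}\,W^n_i(s) \le \tfrac{1}{\lambda}\|\hatq^n\|_{s} + \veps^n(T), \qquad 0 \le s \le T,
\end{equation*}
for a $C$-tight remainder $\veps^n$. Combining this with Lemma \ref{compensator} and the change of variable $u = v/\sqrt{n}$, Assumption \ref{hazard} yields
\begin{equation*}
\sqrt{n}\int_0^{W^n_i(s)\wedge d^n_{i,k}} h^n_i(u)\,\D u \;\le\; \int_0^{\sqrt{n}W^n_i(s)} h^n_i\!\left(\tfrac{v}{\sqrt{n}}\right)\D v \;\longrightarrow\; H_i\!\left(\sqrt{n}\,W^n_i(s)\right),
\end{equation*}
uniformly on compacts. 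Summing over $k$ and using $\barn^n_i \to \lambda_i\,\iota$, this produces, on $\{t \le \tau^n_K \wedge T\}$,
\begin{equation*}
\frac{A^n_i(t)}{\sqrt{n}} \;\le\; \int_0^t H_i\!\left(\tfrac{\|\hatq^n\|_s}{\lambda} + \veps^n(T)\right) \barn^n_i(s)\,\D s + o_P(1).
\end{equation*}
The martingale term is controlled by Doob's inequality and $[M^n_i]_t = G^n_i(t)$, using $\E[G^n_i(t\wedge\tau^n_K)] = \E[A^n_i(t\wedge\tau^n_K)]$, so the same estimate bounds $M^n_i/\sqrt{n}$.

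Since $H_i$ is locally Lipschitz (Assumption \ref{patience}, via (iv) of Remark \ref{remark}), the compensator bound is affine in $\|\hatq^n\|_s$. Gronwall's inequality applied to $\|\hatq^n\|_{t\wedge\tau^n_K}$ then gives
\begin{equation*}
\|\hatq^n\|_{T\wedge \tau^n_K} \;\le\; C(T,K)\,\bigl(|\hatq^n(0)| + \|X^n\|_T + 1\bigr),
\end{equation*}
where $X^n$ denotes the $C$-tight renewal/martingale remainder and $C(T,K)$ depends on the Lipschitz constant of $H_i$ on $[0, K/\lambda + 1]$. For $T$ small enough that $C(T,K)/K \to 0$ as $K\to\infty$, this forces $\limsup_{n\to\infty} \PP(\tau^n_K \le T) \to 0$; iteration on a finite partition of $[0,T]$ extends the stochastic boundedness to any horizon. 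The modulus-of-continuity part of $C$-tightness then follows from the same decomposition, since on $\{T < \tau^n_K\}$ the compensator $A^n_i/\sqrt{n}$ has uniformly bounded Lipschitz constant while the renewal and martingale pieces are themselves $C$-tight.

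The main obstacle is the bootstrapping in the Gronwall step: the bound on the compensator involves $\|\hatq^n\|_s$, the very quantity one is trying to control, so the closing of the loop relies on the local Lipschitz property of $H_i$ and on restricting to a short enough time window. Extending to arbitrary $T$ via iteration on subintervals must be carried out with care, ensuring that the $C$-tight status of the renewal/martingale remainders is preserved after each restart.
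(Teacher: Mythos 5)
Your overall strategy (martingale--compensator decomposition plus Gronwall) matches the paper's, but you have overlooked the key simplification and as a result introduced a gap. From Lemma \ref{compensator} the compensator is
\begin{equation*}
A^n_i(t) = \int_0^t \sum_{k=-Q^n_i(0)+1}^{N^n_i(s)} h^n_i(s-t^n_{i,k})\, 1_{\{s-t^n_{i,k}<w^n_{i,k}\wedge d^n_{i,k}\}}\, \D s,
\end{equation*}
and at each $s$ the number of indices with $1_{\{s-t^n_{i,k}<w^n_{i,k}\wedge d^n_{i,k}\}}=1$ is exactly $Q^n_i(s)$. Hence
\begin{equation*}
A^n_i(t) \le \|h^n_i\|_t \int_0^t Q^n_i(s)\,\D s, \qquad\text{so}\qquad \hata^n_i(t) \le \|h^n_i\|_t \int_0^t \|\hatq^n\|_s\,\D s,
\end{equation*}
with no reference to $W^n_i$, $F^n_i$, or $H_i$. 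Since Assumption \ref{hazard} gives $\sup_{n}\|h^n_i\|_t<\infty$ for each $t$, the Gronwall factor is bounded uniformly in $n$ on every fixed $[0,T]$, and the argument closes in one pass without localization, without restricting $T$, and without iteration. This is what the paper does. Your detour through $\sqrt{n}W^n_i$, the scaling limit $\int_0^{x}h^n_i(v/\sqrt n)\,\D v \to H_i(x)$, and the local Lipschitz constant of $H_i$ is not merely more work; it reintroduces dependencies that the direct bound never had.

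The concrete gap is in the closing step. You write that after Gronwall on $\{t\le\tau^n_K\}$ one has $\|\hatq^n\|_{T\wedge\tau^n_K}\le C(T,K)\,(|\hatq^n(0)|+\|X^n\|_T+1)$ with $C(T,K)$ depending on the Lipschitz constant $\kappa(K)$ of $H_i$ on $[0,K/\lambda+1]$, and then ask for $T$ small enough that $C(T,K)/K\to 0$ as $K\to\infty$. But $C(T,K)$ is of order $e^{\kappa(K)T}$, so $C(T,K)/K\to 0$ requires $\kappa(K)T-\log K\to-\infty$; under Assumption \ref{patience} alone $\kappa(K)$ may grow arbitrarily fast in $K$, and then no $T>0$ works. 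You would have to argue separately that $\kappa(K)$ is bounded. In fact Assumption \ref{hazard} forces $h_i$ to be globally bounded (for any $\delta>0$ and $n$ large, $h_i(t)=\lim_n h^n_i(t/\sqrt n)\le\sup_n\|h^n_i\|_\delta$), so $H_i$ is globally Lipschitz and the ``small $T$ plus iterate'' apparatus is unnecessary --- but if you realize that, the whole detour through $H_i$ collapses and you might as well work directly with $\|h^n_i\|_t$ as the paper does. Two smaller points: the displayed compensator bound should read $\D\barn^n_i(s)$ rather than $\barn^n_i(s)\,\D s$; and the remark that \eqref{fluid-waiting} is ``valid without assuming stochastic boundedness of $\hatq^n$'' is misleading --- the inequality is algebraic, but extracting a $C$-tight remainder $\veps^n$ from the time-changed partial sum already requires that the random index $|Q^n(t)|+1+N^n_{-1}(t)$ be $O(n)$, i.e.\ the localization you perform is essential there too.
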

\begin{proof}
For $t\ge 0$, we note that 
\begin{align*}
\hatq^n(t) & = \hatq^n(0) + \hatn^n_1(t) - \hatn^n_{-1}(t) + \frac{(\lambda^n_1 - \lambda^n_{-1})t}{\sqrt{n}}- \hatg^n_1(t) + \hatg^n_{-1}(t).
\end{align*}
Define for $t\ge 0$, 
\begin{align*}
\hata^n_i(t) = \frac{1}{\sqrt{n}} A^n_i(t), \ \mbox{and} \ \hatm^n_i(t) = \hatg^n_i(t) - \hata^n_i(t).
\end{align*}
From Lemmas \ref{martingale} and \ref{compensator}, the quadratic variation of the local martingale $\hatm^n_i$ is 
\be\label{qv}\ba
[] [\hatm^n_i]_t & = \frac{1}{n} A^n_i(t) = \frac{1}{n}\int_0^t \sum_{k=-Q^n_i(0)+1}^{N^n_i(s)}   h^n_i(s-t^n_{i,k}) 1_{\{s-t^n_{i,k}<w^n_{i,k}\wedge d^n_{i,k}\}} \D s \\
& \le  \frac{\|h^n_i\|_t}{n}\int_0^t \sum_{k=-Q^n_i(0)+1}^{N^n_i(s)}   1_{\{s-t^n_{i,k}<w^n_{i,k}\wedge d^n_{i,k}\}} \D s \\
& =  \frac{\|h^n_i\|_t}{n}\int_0^t Q^n_i(s) \D s = \|h^n_i\|_t \int_0^t \barq^n_i(s) \D s  \to 0, \ \mbox{in probability.}
\ea\ee
Thus from Lemma \ref{martingale-convergence}, we have $\hatm^n_i \Go 0$. Furthermore, we have for $t\ge 0$,
\begin{align*}
\|\hatq^n\|_t & \le |\hatq^n(0)| + \sum_{i=\pm 1}( \|\hatn^n_i\|_t + \|\hatm^n_i\|_t)+ \frac{|\lambda^n_1 - \lambda^n_{-1}|t}{\sqrt{n}}  + \sum_{i=\pm 1}\|h^n_i\|_t  \int_0^t \|\hatq^n\|_s \D s.
\end{align*}
Using Gronwall's inequality, we have 
\be\label{boundedness}
\|\hatq^n\|_t \le e^{\sum_{i=\pm1}\|h^n_i\|_t} \left( |\hatq^n(0)| + \sum_{i=\pm 1}( \|\hatn^n_i\|_t + \|\hatm^n_i\|_t) + \frac{|\lambda^n_1 - \lambda^n_{-1}|t}{\sqrt{n}}\right).
\ee
Next using similar argument as in \eqref{qv}, we have for $0\le s \le t < \infty,$
\begin{align*}
\hata^n_i(t) - \hata^n_i(s) & = \frac{1}{\sqrt{n}}\int_s^t \sum_{k=-Q^n_i(0)+1}^{N^n_i(u)}   h^n_i(u-t^n_{i,k}) 1_{\{u-t^n_{i,k}<w^n_{i,k}\wedge d^n_{i,k}\}} \D u \\
& \le \| h^n_i\|_{s,t} \int_s^t \hatq^n_i(u) \D u,
\end{align*}
and therefore,
\begin{align*}
& |\hatq^n(t) - \hatq^n(s)| \\
& \le \sum_{i=\pm 1} (|\hatn^n_i(t) - \hatn^n_i(s)| + |\hatm^n_{i}(t) - \hatm^n_{i}(s)| + |\hata^n_i(t) - \hata^n_i(s)|) + \frac{(\lambda^n_1-\lambda^n_{-1})(t-s)}{\sqrt{n}}\\
& \le  \sum_{i=\pm 1} (|\hatn^n_i(t) - \hatn^n_i(s)| + |\hatm^n_{i}(t) - \hatm^n_{i}(s)|) + \sum_{i=\pm1}\| h^n_i\|_{s,t} \int_s^t |\hatq^n(u)| \D u + \frac{(\lambda^n_1-\lambda^n_{-1})(t-s)}{\sqrt{n}}\\
& \le \sum_{i=\pm 1} (|\hatn^n_i(t) - \hatn^n_i(s)| + |\hatm^n_{i}(t) - \hatm^n_{i}(s)|) + (t-s)\|\hatq^n\|_t \sum_{i=\pm1}\| h^n_i\|_{s,t} + \frac{(\lambda^n_1-\lambda^n_{-1})(t-s)}{\sqrt{n}}.
\end{align*}
The above estimate and \eqref{boundedness} implies that $\hatq^n$ is $C$-tight on noting that $\hatn^n_i$ and $\hatm^n_i$ are weakly convergent, and that the jump seize of $\hatq^n_i$ is bounded by $\frac{1}{\sqrt{n}}$.
\end{proof}

\begin{proof}[Proof of Corollary \ref{cor}] This follows immediately from Lemma \ref{queue-tightness}, Theorem \ref{diffusion-thm}, and Remark \ref{remark} (iv).
\end{proof}

\section*{Appendix}

\begin{proof}[Proof of Proposition \ref{unique-map}] Fix $T>0$, and let $\kappa >0$ be such that for $t_1, t_2\in [0, T]$, 
\begin{align*}
|H_1(t_1) - H_1(t_2)| \le \kappa |t_1 -t_2|, \ \mbox{and} \ |H_{-1}(t_1) - H_{-1}(t_2)| \le \kappa |t_1 -t_2|.
\end{align*}
(i) Let $w(t) = w_1(t) + w_{-1}(t), t\in [0, T]$. We first show that there exists $M\in (0,\infty)$ such that $\|w\|_T \le M.$ We first note that for $t\ge 0,$
\begin{align*}
w(t) & = \left|x(t) - \int_0^t H_1(w_1(s))\D s + \int_0^t H_{-1}(w_{-1}(s))\D s\right| \\
& \le |x(t)| + \int_0^t (H_1+H_{-1})(w(s)) \D s.  
\end{align*}
Let $H(x) = H_1(x)+H_{-1}(x)+1, x\in [0,\infty).$ Then we have for $t\in [0,T],$
\begin{align}\label{eq:54}
\|w\|_t \le \|x\|_t + \int_0^t H(\|w\|_s) \D s.
\end{align}
Let $y(t) = \int_0^t H(\|w\|_s) ds, t\in[0, T]$. Then from \eqref{eq:54}, for $t\in[0,T]$,
\[
y'(t) = H(\|w\|_t) \le H(\|x\|_T+y(t)), 
\]
Integrating the above equation on $[0, T],$ we have
\begin{equation}
\int_0^T \frac{y'(t)}{H(\|x\|_T+y(t))} dt \le T,
\end{equation}
and so
\be\label{eq:55}
\int_{\|x\|_T}^{\|x\|_T+y(T)} \frac{du}{H(u)} \le T.
\ee
Now define for $t\in [0,T],$
\[
\Phi(t) = \int_0^t \frac{du}{H(u)}.
\]
Since $H$ is strictly positive, nondecreasing, and Lipschitz continous on $[0,T]$, it is clear that $\Phi$ is increasing and differentiable. Finally, we have from \eqref{eq:55}, 
\begin{align*}
\Phi(\|x\|_T + y(T)) - \Phi(\|x\|_T) \le T,
\end{align*}
and so
\begin{align*}
y(T) \le \Phi^{-1}(\Phi(\|x\|_T) + T) - \|x\|_T.
\end{align*}
Now from \eqref{eq:54},
\begin{align}\label{eq:56}
\|w\|_T \le \Phi^{-1}(\Phi(\|x\|_T) + T) .
\end{align}
Denoting the RHS of \eqref{eq:56} by $M$, we have $\|w\|_T \le M.$ We next use Picard iteration to show the existence of $(w_1, w_{-1})$. Define a sequence of $(w^n_1, w^n_{-1})_{n=0}^\infty$ recursively as follows. Let $(w^0_1, w^0_{-1}) = 0$, and for $n\ge 1$ and $t\in [0,T],$
\begin{align*}
w_1^n(t) & = \left[x(t) - \int_0^t H_1(w_1^{n-1}(s))\D s + \int_0^t H_{-1}(w_{-1}^{n-1}(s))\D s\right]^+, \\
w_{-1}^n(t) & = \left[x(t) - \int_0^t H_1(w_1^{n-1}(s))\D s + \int_0^t H_{-1}(w_{-1}^{n-1}(s))\D s\right]^-.
\end{align*}
Then for $s\in [0,T],$
\be
\|w^1_1\|_s  \le \|x\|_s, \ \ \|w^1_{-1}\|_s  \le \|x\|_s,
\ee
for $n\ge 1$,
\begin{align}
& \|w^n_1 - w^{n-1}_1\|_s \nonumber \\
& = \sup_{0\le t\le s} \left| \left[x(t) - \int_0^t H_1(w_1^{n-1}(s))\D s + \int_0^t H_{-1}(w_{-1}^{n-1}(s))\D s\right]^+ \right. \nonumber\\
& \quad\quad\quad\quad - \left. \left[x(t) - \int_0^t H_1(w_1^{n-2}(s))\D s + \int_0^t H_{-1}(w_{-1}^{n-2}(s))\D s\right]^+\right| \nonumber\\
& \le \sup_{0\le t\le s} \left|   \int_0^t - H_1(w_1^{n-1}(s)) + H_1(w_1^{n-2}(s))\D s + \int_0^t H_{-1}(w_{-1}^{n-1}(s))- H_{-1}(w_{-1}^{n-2}(s))\D s\right| \nonumber\\
& \le \int_0^s | H_1(w_1^{n-1}(s)) - H_1(w_1^{n-2}(s))| \D s + \int_0^s |H_{-1}(w_{-1}^{n-1}(s))- H_{-1}(w_{-1}^{n-2}(s))| \D s \label{eq:57}\\
& \le \kappa s (\|w^{n-1}_1 - w^{n-2}_1\|_s + \|w^{n-1}_{-1} - w^{n-2}_{-1}\|_s), \nonumber
\end{align}
and using similar analysis, we have for $n\ge 1$ and $s\in [0,T],$
\bes
\|w^n_{-1} - w^{n-1}_{-1}\|_s \le \kappa s (\|w^{n-1}_1 - w^{n-2}_1\|_s + \|w^{n-1}_{-1} - w^{n-2}_{-1}\|_s).
\ees
Thus for $s\in[0,T],$
\be\label{eq:59}
\|w^1_1\|_s + \|w^1_{-1}\|_s \le 2\|x\|_s,
 \ee 
 and
 \be
 \|w^n_1 - w^{n-1}_1\|_s + \|w^n_{-1} - w^{n-1}_{-1}\|_s \le 2\kappa s (\|w^{n-1}_1 - w^{n-2}_1\|_s + \|w^{n-1}_{-1} - w^{n-2}_{-1}\|_s), \ \ n\ge 2.
 \ee
The rest of the proof is similar to the proof of Lemma 1 in \cite{Reed08}. Choose $\delta >0$ such that $2\kappa\delta < 1$, and then partition $[0, T]$ into $\lfloor T/\delta \rfloor+1$ subintervals $[y_j, y_{j+1}], j =0, 1, \ldots, \lfloor T/\delta \rfloor$, where $y_0 =0, y_j = j\delta, j=1, \ldots, \lfloor T/\delta \rfloor$, and $y_{\lfloor T/\delta \rfloor+1} = T$. Let $c = 2 \|x\|_T$. We first observe that for $n\ge 1$,
\bes
\|w^n_1 - w^{n-1}_1\|_\delta + \|w^n_{-1} - w^{n-1}_{-1}\|_\delta \le 2\kappa \delta (\|w^{n-1}_1 - w^{n-2}_1\|_\delta + \|w^{n-1}_{-1} - w^{n-2}_{-1}\|_\delta),
\ees
and repeating the iterations, we have
\be\label{eq:511}
\|w^n_1 - w^{n-1}_1\|_\delta + \|w^n_{-1} - w^{n-1}_{-1}\|_\delta \le (2\kappa \delta)^{n-1} \cdot 2\|x\|_\delta \le (2\kappa \delta)^{n-1}  c.
\ee
In the following, we use induction to show that for $j =1, \ldots, \lfloor T/\delta \rfloor+1,$
\be\label{eq:512}
\|w^n_1 - w^{n-1}_1\|_{y_j} + \|w^n_{-1} - w^{n-1}_{-1}\|_{y_j} \le j (n-1)^j(2\kappa \delta)^{n-1} c.
\ee
From \eqref{eq:511}, the above inequality \eqref{eq:512} holds for $j=1.$ Suppose \eqref{eq:512} holds for $j=1, \ldots, k.$ Now from \eqref{eq:57},
\begin{align*}
& \|w^n_1 - w^{n-1}_1\|_{y_{k+1}} + \|w^n_{-1} - w^{n-1}_{-1}\|_{y_{k+1}} \\
& \le 2\sum_{j=1}^k \int_{y_{j-1}}^{y_j} | H_1(w_1^{n-1}(s)) - H_1(w_1^{n-2}(s))| + |H_{-1}(w_{-1}^{n-1}(s))- H_{-1}(w_{-1}^{n-2}(s))| \D s \\
& \quad + 2\int_{y_k}^{y_{k+1}} | H_1(w_1^{n-1}(s)) - H_1(w_1^{n-2}(s))| + |H_{-1}(w_{-1}^{n-1}(s))- H_{-1}(w_{-1}^{n-2}(s))| \D s \\
& \le 2\kappa \delta\sum_{j=1}^k  (\|w^{n-1}_1 - w^{n-2}_1\|_{y_j}+\|w^{n-1}_{-1} - w^{n-2}_{-1}\|_{y_j}) +2\kappa \delta (\|w^{n-1}_1 - w^{n-2}_1\|_{y_{k+1}}+\|w^{n-1}_{-1} - w^{n-2}_{-1}\|_{y_{k+1}}) \\
& \le 2\kappa \delta \sum_{j=1}^kj (n-2)^j(2\kappa \delta)^{n-2} c +2\kappa \delta (\|w^{n-1}_1 - w^{n-2}_1\|_{y_{k+1}}+\|w^{n-1}_{-1} - w^{n-2}_{-1}\|_{y_{k+1}}).
\end{align*}
There exists $n_0 \in \NN$ such that when $n\ge n_0$, 
\[
\sum_{j=1}^kj (n-2)^j \le k (n-1)^k. 
\]
Thus when $n\ge n_0$,
\begin{align}
& \|w^n_1 - w^{n-1}_1\|_{y_{k+1}} + \|w^n_{-1} - w^{n-1}_{-1}\|_{y_{k+1}} \nonumber \\
& \le c k(n-1)^k (2\kappa \delta)^{n-1}  +2\kappa \delta (\|w^{n-1}_1 - w^{n-2}_1\|_{y_{k+1}}+\|w^{n-1}_{-1} - w^{n-2}_{-1}\|_{y_{k+1}}). \label{eq:513}
\end{align}
From \eqref{eq:513} and \eqref{eq:59}, we have 
\begin{align*}
& \|w^2_1 - w^{1}_1\|_{y_{k+1}} + \|w^2_{-1} - w^{1}_{-1}\|_{y_{k+1}} \\
& \le c k \cdot 2\kappa \delta +2\kappa \delta (\|w^{1}_1\|_{y_{k+1}}+\|w^{1}_{-1}\|_{y_{k+1}}) \\
& \le c(k+1) \cdot 2\kappa \delta.
\end{align*}
Iterating \eqref{eq:513} shows that for large enough $n$,
\begin{align*}
& \|w^n_1 - w^{n-1}_1\|_{y_{k+1}} + \|w^n_{-1} - w^{n-1}_{-1}\|_{y_{k+1}} \\
& \le c (2\kappa \delta)^{n-1}\left( k\sum_{j=1}^{n-1}j^k +1 \right) \le  c (2\kappa \delta)^{n-1} (k (n-1)^{k+1} +1) \\
& \le c (2\kappa \delta)^{n-1} (k+1) (n-1)^{k+1}.
\end{align*}
Thus we show \eqref{eq:512} holds. When $j= \lfloor T/\delta \rfloor + 1$, 
\begin{align*}
& \|w^n_1 - w^{n-1}_1\|_{T} + \|w^n_{-1} - w^{n-1}_{-1}\|_{T}  \le   c (2\kappa \delta)^{n-1} (\lfloor T/\delta \rfloor+2) (n-1)^{\lfloor T/\delta \rfloor+2} \to 0, \ \ \mbox{as $n\to\infty$.}
\end{align*}
So $(w^n_1, w^n_{-1})$ converges and denote the limit function by $(w^*_1, w^*_{-1}) = \lim_{n\to\infty}(w^n_1, w^n_{-1}).$ From \eqref{eq:51} and \eqref{eq:52}, define the functional $G = (G_1, G_{-1}):D([0,T], \RR^2_+) \to D([0, T], \RR^2_+)$ as follows: For $t\in [0, T],$
\begin{align*}
G(w)_1(t) & = \left[x(t) - \int_0^t H_1(w_1(s))\D s + \int_0^t H_{-1}(w_{-1}(s))\D s\right]^+, \\
G(w)_{-1}(t) & = \left[x(t) - \int_0^t H_1(w_1(s))\D s + \int_0^t H_{-1}(w_{-1}(s))\D s\right]^-.
\end{align*}
Thus $w = G(w)$. We next note that $G$ is Lipschitz continuous, i.e., for $s\in [0,T]$
\[
\|G(w)- G(\tilde w)\|_s \le 2 \kappa T \|w-\tilde w\|_s.
\]
Using Banach fixed point theorem, we have $w = (w^*_1, w^*_{-1}).$\\
(ii) Let $x^n, x\in D([0,\infty), \RR)$ such that $x^n \to x$ in the Skorohod $J_1$ topology. Fix $T\in (0,\infty).$ Then there exists a sequence of strictly increasing homeomorphism $\{\lambda^n\}$ on $[0,\infty)$ such that
\be\label{eq:53}
\sup_{0\le t\le T}|x^n(\lambda^n(t)) - x(t)| \vee \sup_{0\le t\le T}|\lambda^n(t) - t| \to 0, \ \mbox{as $n\to\infty.$}
 \ee
Let 
\[
w^n \equiv (w^n_1, w^n_{-1}) = (\Psi_1(x^n), \Psi_{-1}(x^n)), \ \mbox{and} \ w\equiv (w_1, w_{-1}) = (\Psi_1(x), \Psi_{-1}(x)).
\]
It is clear that, from \eqref{eq:53}, there exists $M>0$ such that 
\[
\sup_{n\in\NN}\|x^n\|_T + \|x\|_T \le M,
\]
and from \eqref{eq:56}, there exists $\tilde M>0$ such that
\[
\sup_{n\in\NN}\|w^n\|_T + \|w\|_T \le \tilde M.
\]
Let $\kappa>0$ be the Lipschitz constant for $H_1$ and $H_{-1}$ on $[0, \tilde M].$
For $u\in[0, T]$, 
\begin{align*}
\sup_{0\le t \le u} |w^n_1(\lambda^n(t)) - w_1(t)| & = \sup_{0\le t \le u}\left|\left[x^n(\lambda^n(t)) - \int_0^{\lambda^n(t)} H_1(w_1^n(s))\D s + \int_0^{\lambda^n(t)} H_{-1}(w_{-1}^n(s))\D s\right]^+ \right. \\
& \quad\quad\quad - \left. \left[x(t) - \int_0^t H_1(w_1(s))\D s + \int_0^t H_{-1}(w_{-1}(s))\D s\right]^+ \right| \\
& \le \sup_{0\le t\le u}|x^n(\lambda^n(t))-x(t)| + \sup_{0\le t \le u}\left| \int_0^{\lambda^n(t)} H_1(w_1^n(s))\D s - \int_0^t H_1(w_1(s))\D s\right|\\
& \quad\quad\quad + \sup_{0\le t\le u} \left|\int_0^{\lambda^n(t)} H_{-1}(w_{-1}^n(s))\D s- \int_0^t H_{-1}(w_{-1}(s))\D s\right|.
\end{align*}
We next observe that for $u\in[0,T]$,
\begin{align*}
&\int_0^{\lambda^n(t)} H_1(w_1^n(s))\D s - \int_0^t H_1(w_1(s))\D s\\
& = \int_0^{\lambda^n(t)} H_1(w_1^n(s))\D s -\int_0^{t} H_1(w_1^n(\lambda^n(s)))\D s + \int_0^{t} H_1(w_1^n(\lambda^n(s)))\D s - \int_0^t H_1(w_1(s))\D s \\
& = \int_0^{t} H_1(w_1^n(s)) - H_1(w_1^n(\lambda^n(s)))\D s + \int_t^{\lambda^n(t)} H_1(w_1^n(s))\D s  + \int_0^t  H_1(w_1^n(\lambda^n(s))) - H_1(w_1(s)) \D s.
\end{align*}
We note that from the Lipschitz continuity of $H_1$ and \eqref{eq:51}, 
\begin{align*}
& \sup_{0\le t\le T}\left|\int_0^{t} H_1(w_1^n(s)) - H_1(w_1^n(\lambda^n(s)))\D s\right| \\
& \le T \kappa \sup_{0\le t\le T}\left|w_1^n(t) - w_1^n(\lambda^n(t))\right| \\
& \le T\kappa  \sup_{0\le t\le T} \left(\left| x^n(t) - x^n(\lambda^n(t))  \right|  + \left| \int_t^{\lambda^n(t)} H_1(w_1(s)) \D s\right| + \left|\int_t^{\lambda^n(t)} H_{-1}(w_{-1}(s)) \D s\right|\right) \\
& \le \kappa T \sup_{0\le t\le T}\left| x^n(t) - x^n(\lambda^n(t))  \right|  + 2\kappa^2 T \tilde M \sup_{0\le t\le T} |\lambda^n(t) -t|
\end{align*}
Thus
\begin{align*}
& \sup_{0\le t\le u} \left|\int_0^{\lambda^n(t)} H_1(w_1^n(s))\D s - \int_0^t H_1(w_1(s))\D s\right| \\
& \le \sup_{0\le t\le T}\left|\int_0^{t} H_1(w_1^n(s)) - H_1(w_1^n(\lambda^n(s)))\D s\right| + \sup_{0\le t\le T} \left| \int_t^{\lambda^n(t)} H_1(w_1^n(s))\D s\right| \\
& \quad + \sup_{0\le t\le T} \left| \int_0^t  H_1(w_1^n(\lambda^n(s))) - H_1(w_1(s)) \D s\right| \\
& \le \kappa T \sup_{0\le t\le T}\left| x^n(t) - x^n(\lambda^n(t))  \right|  + 2\kappa^2 T \tilde M \sup_{0\le t\le T} |\lambda^n(t) -t| + \kappa \tilde M \sup_{0\le t\le T} |\lambda^n(t) - t|  \\
& \quad + 2\kappa \int_0^T \sup_{0\le s \le t} |w^n_1(\lambda^n(s)) - w_1(s)| \D t \\
& = \kappa T \sup_{0\le t\le T}\left| x^n(t) - x^n(\lambda^n(t))  \right|  + (2\kappa^2+\kappa) T \tilde M \sup_{0\le t\le T} |\lambda^n(t) -t|  + 2\kappa \int_0^T \sup_{0\le s \le t} |w^n_1(\lambda^n(s)) - w_1(s)| \D t,
\end{align*}
and similarly, 
\begin{align*}
& \sup_{0\le t\le u} \left|\int_0^{\lambda^n(t)} H_{-1}(w_{-1}^n(s))\D s- \int_0^t H_{-1}(w_{-1}(s))\D s\right|  \\
&  \le  \kappa T \sup_{0\le t\le T}\left| x^n(t) - x^n(\lambda^n(t))  \right|  + (2\kappa^2+\kappa) T \tilde M \sup_{0\le t\le T} |\lambda^n(t) -t|  + 2\kappa \int_0^T \sup_{0\le s \le t} |w^n_1(\lambda^n(s)) - w_1(s)| \D t.
\end{align*}
Combining the above estimates, we have
\begin{align*}
&\sup_{0\le t \le T} |w^n_1(\lambda^n(t)) - w_1(t)| \le (1+2\kappa T) \sup_{0\le t\le T}|x^n(\lambda^n(t))-x(t)|\\
 & + 2 (2\kappa^2+\kappa) T \tilde M \sup_{0\le t\le T} |\lambda^n(t) - t| + 2\kappa  \int_0^T \sup_{0\le s \le t} |w^n_1(\lambda^n(s)) - w_1(s)| + \sup_{0\le s \le t}|w^n_{-1}(\lambda^n(s)) - w_{-1}(s)|\D t. 
\end{align*}
A similar analysis yields that 
\begin{align*}
&\sup_{0\le t \le T} |w^n_{-1}(\lambda^n(t)) - w_{-1}(t)| \le (1+2\kappa T)\sup_{0\le t\le T}|x^n(\lambda^n(t))-x(t)| \\
 & + 2 (2\kappa^2+\kappa) T \tilde M \sup_{0\le t\le T} |\lambda^n(t) - t| + 2\kappa  \int_0^T \sup_{0\le s \le t} |w^n_1(\lambda^n(s)) - w_1(s)| + \sup_{0\le s \le t}|w^n_{-1}(\lambda^n(s)) - w_{-1}(s)|\D t. 
\end{align*} 
Finally, we have for $u\in[0,T],$
\begin{align*}
&\sup_{0\le t \le u} |w^n_1(\lambda^n(t)) - w_1(t)| + \sup_{0\le t \le u} |w^n_{-1}(\lambda^n(t)) - w_{-1}(t)|\le  2(1+2\kappa T)\sup_{0\le t\le u}|x^n(\lambda^n(t))-x(t)| \\
 & + 4 (2\kappa^2+\kappa) T \tilde M\sup_{0\le t\le u} |\lambda^n(t) - t| + 4\kappa  \int_0^T \sup_{0\le s \le t} |w^n_1(\lambda^n(s)) - w_1(s)| + \sup_{0\le s \le t}|w^n_{-1}(\lambda^n(s)) - w_{-1}(s)|\D t. 
\end{align*} 
Using Gronwall's inequality, we have for $u\in[0,T],$
\begin{align*}
& \sup_{0\le t \le T} |w^n_1(\lambda^n(t)) - w_1(t)| + \sup_{0\le t \le u} |w^n_{-1}(\lambda^n(t)) - w_{-1}(t)|\\
& \le \left( 2(1+2\kappa T)\sup_{0\le t\le T}|x^n(\lambda^n(t))-x(t)| + 4 (2\kappa^2+\kappa) T \tilde M \sup_{0\le t\le T} |\lambda^n(t) - t|\right) e^{4\kappa u}.
\end{align*}
Finally, from the above estimate, we see that 
\begin{align*}
\sup_{0\le t \le T} |w^n(\lambda^n(t)) - w(t)| \vee \sup_{0\le t\le T}|\lambda^n(t) - t| \to 0, \ \mbox{as $n\to\infty.$} 
\end{align*}
\end{proof}

 \bibliographystyle{amsplain}
 \bibliography{references}

\providecommand{\bysame}{\leavevmode\hbox to3em{\hrulefill}\thinspace}
\providecommand{\MR}{\relax\ifhmode\unskip\space\fi MR }
\providecommand{\MRhref}[2]{%
  \href{http://www.ams.org/mathscinet-getitem?mr=#1}{#2}
}
\providecommand{\href}[2]{#2}
\begin{thebibliography}{10}

\bibitem{adm14}
P.~Afeche, A.~Diamant, and J.~Milner, \emph{Double-sided batch queues with
  abandonment: {M}odeling crossing networks}, Operations Research \textbf{62}
  (2014), no.~5, 1179--1201.

\bibitem{bdps}
O.J. Boxma, I.~David, D.~Perry, and W.~Stadje, \emph{A new look at organ
  transplantation models and double matching queues}, Probability in the
  Engineering and Informational Sciences \textbf{25} (2011), 135--155.

\bibitem{conolly2002double}
B.~W. Conolly, P.~R. Parthasarathy, and N.~Selvaraju, \emph{Double-ended queues
  with impatience}, Computers \& Operations Research \textbf{29} (2002),
  no.~14, 2053--2072.

\bibitem{DaiHe}
J.~G. Dai and S.~He, \emph{Customer abandonment in many-server queues}, Math.
  Oper. Res. \textbf{35} (2010), no.~2, 347--362.

\bibitem{Dai10}
J.~G. Dai, S.~He, and T.~Tezcan, \emph{Many-server diffusion limits for
  {$G/Ph/n+GI$} queues}, Ann. Appl. Probab. \textbf{20} (2010), no.~5,
  1854--1890.

\bibitem{daiwill96}
J.~G. Dai and R.~J. Williams, \emph{Existence and uniqueness of semimartingale
  reflecting brownian motions in convex polyhedrons}, Theory of Probability \&
  Its Applications \textbf{40} (1996), no.~1, 1--40.

\bibitem{daihe2010}
Jim Dai and Shuangchi He, \emph{Customer abandonment in many-server queues},
  Mathematics of Operations Research \textbf{35} (2010), no.~2, 347--362.

\bibitem{talayasymptotic}
I.~T. Degirmenci, \emph{Asymptotic analysis and performance-based design of
  large scale service and inventory systems}, Ph.D. dissertation, Department of
  Business Administration, Duke University (2010).

\bibitem{dobbie61}
J.~M. Dobbie, \emph{A doubled-ended queuing problem of {K}end}, Operations
  Research \textbf{9} (1961), no.~5, 755--757.

\bibitem{ek86}
Stewart~N. Ethier and Thomas~G. Kurtz, \emph{Markov processes :
  characterization and convergence}, J. Wiley \& Sons, 1986.

\bibitem{Garnet02}
O.~Garnet, A.~Mandelbaum, and M.~Reiman, \emph{Designing a call center with
  impatient customers}, Manufacturing \& Service Operations Management
  \textbf{4} (2002), no.~3, 208--227.

\bibitem{Giveen63}
S.~M. Giveen, \emph{A taxicab problem with time-dependent arrival rates}, SIAM
  Reviw \textbf{5} (1963), no.~2, 119--127.

\bibitem{iw71}
Donald~L. Iglehart and Ward Whitt, \emph{The equivalence of functional central
  limit theorems for counting processes and associated partial sums}, The
  Annals of Mathematical Statistics \textbf{42} (1971), no.~4, 1372--1378.

\bibitem{js03}
J.~Jacod and A.N. Shiryaev, \emph{Limit theorem for stochastic processes}, 2
  ed., Springer-Verlag, Berlin, 2003.

\bibitem{kang10}
Weining Kang and Kavita Ramanan, \emph{Fluid limits of many-server queues with
  reneging}, Ann. Appl. Prob. \textbf{20} (2010), no.~6, 2204--2260.

\bibitem{kashyap1966double}
B.R.K. Kashyap, \emph{The double-ended queue with bulk service and limited
  waiting space}, Operations Research \textbf{14} (1966), no.~5, 822--834.

\bibitem{kendall51}
D.~G. Kendall, \emph{Some problems in the theory of queues}, Journal of the
  Royal Statistical Society. Series B \textbf{13} (1951), no.~2, 151--185.

\bibitem{kim2010simulation}
W.~K. Kim, K.~P. Yoon, G.~Mendoza, and M.~Sedaghat, \emph{Simulation model for
  extended double-ended queueing}, Computers \& Industrial Engineering
  \textbf{59} (2010), no.~2, 209--219.

\bibitem{Lee11}
C.~Lee and A.~Weerasinghe, \emph{Convergence of a queueing system in heavy
  traffic with general patience-time distributions}, Stochastic Processes and
  their Applications \textbf{121} (2011), no.~11, 2507--2552.

\bibitem{liu2015}
Xin Liu, Qi~Gong, and Vidyadhar~G. Kulkarni, \emph{Diffusion models for
  double-ended queues with renewal arrival processes}, Stoch. Syst. \textbf{5}
  (2015), no.~1, 1--61.

\bibitem{Man12}
A.~Mandelbaum and P.~Momcilovic, \emph{Queues with many servers and impatient
  customers}, Mathematics of Operations Research \textbf{37} (2012), no.~1,
  41--65.

\bibitem{pang2007}
Guodong Pang, Rishi Talreja, and Ward Whitt, \emph{Martingale proofs of
  many-server heavy-traffic limits for markovian queues}, Probab. Surveys
  \textbf{4} (2007), 193--267.

\bibitem{perry1999perishable}
D.~Perry and W.~Stadje, \emph{Perishable inventory systems with impatient
  demands}, Mathematical methods of operations research \textbf{50} (1999),
  no.~1, 77--90.

\bibitem{prabhakar2000synchronization}
B.~Prabhakar, N.~Bambos, and T.~S. Mountford, \emph{The synchronization of
  {P}oisson processes and queueing networks with service and synchronization
  nodes}, Advances in Applied Probability \textbf{32} (2000), no.~3, 824--843.

\bibitem{ReedTezcan12}
J.~Reed and T.~Tezcan, \emph{Hazard rate scaling of the abandonment
  distribution for the {$GI/M/n + GI$} queue in heavy traffic}, Oper. Res.
  \textbf{60} (2012), no.~4, 981--995.

\bibitem{Reed08}
J.~E. Reed and Amy~R. Ward, \emph{Approximating the gi/gi/1+gi queue with a
  nonlinear drift diffusion: Hazard rate scaling in heavy traffic}, Math. Oper.
  Res. \textbf{33} (2008), no.~3, 606--644.

\bibitem{WardGlynn03}
A.~R. Ward and P.~W. Glynn, \emph{A diffusion approximation for a {M}arkovian
  queue with reneging}, Queueing Syst. Theory Appl. \textbf{43} (2003),
  no.~1/2, 103--128.

\bibitem{WardGlynn05}
\bysame, \emph{A diffusion approximation for a {$GI/GI/1$} queue with balking
  or reneging}, Queueing Syst. Theory Appl. \textbf{50} (2005), no.~4,
  371--400.

\bibitem{Man05}
S.~Zeltyn and A.~Mandelbaum, \emph{Call centers with impatient customers:
  Many-server asymptotics of the {$M/M/n + G$} queue}, Queueing Systems
  \textbf{51} (2005), no.~3-4, 361--402.

\bibitem{zenios}
S.~A. Zenios, \emph{Modeling the transplant waiting list: {A} queueing model
  with reneging}, Queueing Systems \textbf{31} (1999), 239--251.

\end{thebibliography}

\skp

{\sc

\bigskip\noi
Xin Liu\\
Department of Mathematical Sciences\\
    Clemson University\\
Clemson, SC 29634, USA\\
email:    xliu9@clemson.edu.

}

\end{document}